\documentclass[a4paper,10pt]{amsart}
\usepackage{amsmath,amsthm,amssymb,enumerate}
\usepackage{epsfig}
\usepackage{amssymb}
\usepackage{amsmath}
\usepackage{amssymb}
\usepackage{amsmath,amsthm}
\usepackage[latin1]{inputenc}
\usepackage{bbm}
\usepackage{esint}
\usepackage{amsfonts}
\usepackage{amsxtra}
\usepackage{euscript,mathrsfs}
\usepackage{color}
\usepackage[left=3.6cm,right=3.6cm,top=4cm,bottom=3.3cm]{geometry}
\allowdisplaybreaks
\usepackage{tikz}
\usepackage{pgfplots}

\usepackage{pst-plot}
\psset{unit=5mm,plotpoints=1000,algebraic}

\usepackage{enumitem}
\setenumerate{label={\rm (\alph{*})}}

\usepackage{xcolor}
\colorlet{ColorPink}{red!30}
\definecolor{Gump}{rgb}{0,0.6,0.4}
\usepackage{graphicx}

\usepackage[colorlinks=true, linktocpage=true, linkcolor=red!70!black,
 citecolor=green!50!black]{hyperref}

\usepackage{amsfonts}
\usepackage{amsxtra}

\usepackage[frak,hyperref,theorem_section]{paper_diening}
\numberwithin{equation}{section}

  \theoremstyle{remark}

\newtheorem{remarkise}[theorem]{Remark}
\newtheorem{conj}[theorem]{Conjecture}

\newcommand{\R}{\mathbb R}
\newcommand{\N}{\mathbb N}

\newcommand{\C}{\mathbb C}
 
\newcommand{\mA}{\mathscr A}

\newcommand{\dif}{\mathrm{d}}

\DeclareMathOperator{\dist}{dist}

\renewcommand{\dif}{\operatorname{d}\!}
\newcommand{\lebe}{\operatorname{L}}
\newcommand{\sobo}{\operatorname{W}}

\newcommand{\hold}{\operatorname{C}}

\newcommand{\ball}{B}
\newcommand{\di}{\operatorname{div}}

\newcommand{\bmo}{\operatorname{BMO}}

\newcommand{\T}{\mathbb{T}}
\newcommand{\test}{\mathcal{T}}
\renewcommand{\AA}{\mathfrak{A}}
\newcommand{\rsym}{\mathbb{R}_{\operatorname{sym}}^{n\times n}}
\newcommand{\rdrei}{\mathbb{R}_{\operatorname{sym}}^{3\times 3}}

\allowdisplaybreaks

\renewcommand{\dashint}{\fint}

%%%%%%%%%%%%%%%%%%%%%%%%%%%%%%%%%%%%%%%%%%%%%%%%%%%%%%%%%%%%%%%%%%%%%%%%%%%%%%%%
%%%
%%% define the titlepage
%%%
%%%%%%%%%%%%%%%%%%%%%%%%%%%%%%%

%\setlength{\parindent}{0pt} %Setzt den Erstzeileneinzug auf Null.
%\setlength{\parskip}{\baselineskip} %Trennt Abs\"{a}tze durch eine zus\"{a}tzliche Leerzeile.
%\setlength{\oddsidemargin}{30mm}

%\setlength{\textwidth}{155mm}
%\setlength{\oddsidemargin}{-5.0mm}
%\setlength{\topmargin}{-6.0mm}
%\setlength{\textheight}{236mm}
%\oddsidemargin (linker Rand allgemein bzw. f\"{u}r ungerade Seiten bei twoside und book)
%\evensidemargin (linker Rand f\"{u}r gerade Seiten (wirksam nur bei twoside und book))

\begin{document}

%\begin{frontmatter}

\title[divsym-free $\lebe^{\infty}$-truncations and divsym-quasiconvex hulls]{On Symmetric div-quasiconvex hulls and \\ divsym-free $\lebe^{\infty}$-truncations }
\author[L.~Behn]{Linus Behn}
\address[L.~Behn]{Fakult\"{a}t f\"{u}r Mathematik, Universit\"{a}t Bielefeld, Universit\"{a}tsstra\ss e 25, 33615 Bielefeld, Germany}
\email{lbehn@math.uni-bielefeld.de}
\author[F.~Gmeineder]{Franz Gmeineder}
\address[F.~Gmeineder]{Fachbereich Mathematik und Statistik, Universit\"{a}t Konstanz, Universit\"{a}ts- stra\ss e 10, 78464 Konstanz, Germany}
\email{franz.gmeineder@uni-konstanz.de}
\author[S.~Schiffer]{Stefan Schiffer}
\address[S.~Schiffer]{Institut f\"{u}r angewandte Mathematik, Universit\"{a}t Bonn, Endenicher Allee 60, 53115 Bonn, Germany}
\email{schiffer@iam.uni-bonn.de}

\maketitle
\begin{abstract}
We establish that for any non-empty, compact set $K\subset\rdrei$ the $1$- and $\infty$-symmetric div-quasiconvex hulls $K^{(1)}$ and $K^{(\infty)}$ coincide. This settles a conjecture in a recent work of \textsc{Conti, M\"{u}ller \& Ortiz} \cite{CMO19} in the affirmative. As a key novelty, we construct an $\lebe^{\infty}$-truncation that preserves both symmetry and solenoidality of matrix-valued maps in $\lebe^{1}$. 
\end{abstract}

\section{Introduction} 
\subsection{Aim and scope}\label{sec:plasticity}
One of the key problems in continuum mechanics is the mathematical description of the plasticity behaviour of solids. Such solids are usually modelled by reference configurations $\Omega\subset\R^{3}$ subject to loads or forces and corresponding \emph{velocity fields} $v\colon\Omega\to\R^{3}$. The  (elasto)plastic behaviour of the material is mathematically described in terms of the stress tensor $\sigma\colon\Omega\to\rdrei$ and is dictated by the precise target $K\subset\rdrei$ where it takes values; $K$ is usually referred to as the \emph{elastic domain}. When ideal plasticity is assumed and potential hardening effects are excluded, $K$ is a compact set in $\rdrei$ with non-empty interior. As prototypical examples, in the \textsc{Von Mises} or \textsc{Tresca} models used for the description of metals or alloys, we have $K=\{\sigma\in\rdrei\colon\;\mathbf{f}(\sigma^{D})\leq \theta\}$ with a threshold $\theta>0$, the deviatoric stress $\sigma^{D}:=\sigma-\frac{1}{3}\mathrm{tr}(\sigma)E_{3\times 3}$ and \emph{convex} $\mathbf{f}\colon\rdrei\to\R$. Generalising this to $K=\{\sigma\in\rdrei\colon\;\mathbf{f}(\sigma^{D})+\vartheta\mathrm{tr}(\sigma)\leq \theta\}$ for $\vartheta>0$ as in the \textsc{Drucker-Prager} or \textsc{Mohr-Coulomb} models for concrete or sand (cf.~\cite{DruckerPrager,Lubliner}), such models take into account persisting volumetric changes induced by the hydrostatic pressure as plasticity effects. In all of these models, $K$ is a \emph{convex} set. This opens the gateway to the techniques from convex analysis, and we refer to \cite{FuchsSeregin,Lubliner} for a more detailled treatment of the matter.

As the main motivation for the present paper, the convexity assumption on the elastic domain $K$ is \emph{not satisfied} by all materials. A prominent example where the non-convexity of $K$ can be observed explicitely is fused silica glass (cf.~ \textsc{Meade \& Jeanloz} \cite{MJ}). Slightly more generally, for amorphous solids being deformed subject to shear, experiments on the molecular dynamics (cf.~\textsc{Maloney \& Robbins} \cite{Maloney08}) exhibit the formation of characteristic patterns in the underlying deformation fields. As a possible explanation of this phenomenon, the emergence of such patterns on the \emph{microscopic} level displays the effort of the material to cope with the enduring  \emph{macroscopic} deformations. Within the framework of limit analysis \cite{Lubliner}, \textsc{Schill} et al. \cite{Schill} offer a link between the non-convexity of $K$ and the appearance of such fine microstructure. Working from plastic dissipation principles, the corresponding static problem is identified in \cite{Schill} as 
\begin{align}\label{eq:varprin}
\sup_{\sigma}\inf_{v}\left\{\int_{\Omega}\sigma\cdot \nabla v\dif x\colon\;\sigma\in\lebe_{\mathrm{div}}^{\infty}(\Omega;K),\;\;v\in\sobo^{1,1}(\Omega;\R^{n}),\;\;\;v=g\;\text{on}\;\partial\Omega \right\}
\end{align}
for given boundary data $g\colon\partial\Omega\to\R^{3}$. Here, $\lebe_{\mathrm{div}}^{\infty}(\Omega;K)$ is the space of all $\lebe^{\infty}(\Omega;K)$-maps which are row-wise divergence-free (or solenoidal) in the sense of distributions; note that, if even we admitted general $\sigma\in\lebe^{\infty}(\Omega;K)$  in \eqref{eq:varprin}, the variational principle would be non-trivial only for $\sigma\in\lebe_{\mathrm{div}}^{\infty}(\Omega;K)$. Stability under microstructure formation, in turn, is linked to the existence of solutions of \eqref{eq:varprin}; cf. \textsc{M\"{u}ller} \cite{Mul98} for a discussion of the underlying principles. Towards the existence of solutions, the direct method of the Calculus of Variations requires semicontinuity, and it is here where the set $K$ must be relaxed. By the constraints on $\sigma$, this motivates the passage to the \emph{symmetric div-quasiconvex hull} of $K$ as studied by \textsc{Conti, M\"{u}ller \& Ortiz} \cite{CMO19}. In the present paper, we complete the characterisation of such hulls (cf.~Theorem~\ref{thm:main1} below) and thereby answer a conjecture posed in \cite{CMO19} in the affirmative. To state our result, we pause and introduce the requisite terminology first. 
\subsection{Divsym-quasiconvexity and the main result}
Following \cite{CMO19}, we call a Borel measurable, locally bounded function $F\colon\rsym\to\R$ \emph{symmetric div-quasiconvex} if 
\begin{align}\label{eq:defDivsymQC}
F(\xi) \leq \int_{\T_n}F(\xi+\varphi(x))\dif x 
\end{align}
holds for all $\xi\in\rsym$ and all admissible test maps\begin{align}\label{eq:testmaps}
\varphi \in \test :=\left\{ \phi \in \hold^{\infty}(\T_n;\rsym)\;\;\;\;\di(\varphi)=0, \int_{\T_n} \phi \dif x=0\right\},
\end{align}
where $\T_n$ denotes the $n$-dimensional torus. Here, the divergence is understood in the row- (or equivalently, column-)wise manner. Accordingly, the  \emph{symmetric div-quasiconvex (or divsym-quasiconvex) envelope} of a Borel measurable, locally bounded function $F\colon\rsym\to\R$ is defined as the largest symmetric div-quasiconvex function below $F$; more explicitely, 
\begin{align}\label{eq:DefEnvelope}
\mathscr{Q}_{\mathrm{sdqc}}F(\xi):=\inf\left\{\int_{\T_n}F(\xi+\varphi(x))\dif x\colon\;\varphi\in\test\right\}.
\end{align}
Divsym-quasiconvexity is a strictly weaker notion than convexity, which can be seen \textsc{Tartar}'s example \cite{Tartar} $f\colon\rsym\ni \xi \mapsto (n-1)|\xi|^{2}-\mathrm{tr}(\xi)^{2}$. 

The discussion in Section~\ref{sec:plasticity} necessitates a notion of divsym-quasiconvexity \emph{for sets}. Inspired by the separation theory from convex analysis, we call a compact set $K\subset\rsym$ \emph{symmetric div-quasiconvex} provided for each $\xi\in\rsym\setminus K$ there exists a symmetric div-quasiconvex $g\in\hold(\rsym;[0,\infty))$ such that $g(\xi)>\max_{K}g$. The relaxation of the elastic domains $K\subset\rsym$ in turn is defined in terms of the symmetric div-quasiconvex envelopes of distance functions. For a compact subset $K\subset\rsym$ and $1\leq p<\infty$, put $f_{p}(\xi):=\mathrm{dist}^{p}(\xi,K)$. The \emph{$p$-symmetric div-quasiconvex hull} of $K$ then is defined by 
\begin{align}\label{eq:Kp}
K^{(p)}:=\{\xi\in\rsym\colon\;\mathscr{Q}_{\mathrm{sdqc}}f_{p}(\xi)=0\}, 
\end{align}
whereas we set for $p=\infty$: 
\begin{align}\label{eq:Kinfty}
K^{(\infty)}:=\left\{\xi\in\rsym\colon\;\begin{array}{c}g(\xi)\leq \max_{K}g\;\text{for all symmetric} \\ \text{div-quasiconvex $g\in\hold(\rsym;[0,\infty))$}\end{array}\right\}.
\end{align}
Both \eqref{eq:Kp} and \eqref{eq:Kinfty} are the natural generalisations of the usual convex hulls to the symmetric div-quasiconvex context, and one easily sees that $K^{(\infty)}$ is the smallest symmetric div-quasiconvex, compact set containing $K$. 

By our discussion in Section~\ref{sec:plasticity}, it is  particularly important to understand the properties of the symmetric div-quasiconvex hulls. In \cite{CMO19}, \textsc{Conti, M\"{u}ller \& Ortiz} established that $K^{(p)}$ is independent of $1<p<\infty$. Specifically, they conjectured in \cite[Rem.~3.9]{CMO19} that $K^{(1)}=K^{(\infty)}$ in analogy with the usual quasiconvex envelopes (see \textsc{Zhang} \cite{Zhang97} or \textsc{M\"{u}ller} \cite[Thm.~4.10]{Mul98}). The present paper answers this question in the affirmative, leading us to our main result:
\begin{theorem}[Main result]\label{thm:main1}
Let $K\subset\rdrei$ be compact. Then $K^{(1)}=K^{(\infty)}$ and so 
\begin{align}\label{eq:main}
K^{(p)}=K^{(1)}=K^{(\infty)}\qquad \text{for all}\;\;\;1\leq p \leq \infty.
\end{align}
\end{theorem}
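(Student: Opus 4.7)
The plan is to establish the non-trivial inclusion $K^{(1)}\subseteq K^{(\infty)}$; the reverse chain $K^{(\infty)}\subseteq K^{(p)}\subseteq K^{(1)}$ for $1\leq p<\infty$ is standard, combining a Jensen-type comparison of $\mathscr{Q}_{\mathrm{sdqc}}f_{p}$ for different exponents with the separation definition \eqref{eq:Kinfty} applied to the non-negative divsym-quasiconvex envelope $g:=\mathscr{Q}_{\mathrm{sdqc}}f_{p}$, noting $\max_{K}g=0$.

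So fix $\xi\in K^{(1)}$. By \eqref{eq:Kp} and \eqref{eq:DefEnvelope} there is a sequence $(\varphi_{j})\subset\test$ with $\int_{\T_n}\dist(\xi+\varphi_{j}(x),K)\dif x\to 0$. To conclude $\xi\in K^{(\infty)}$, I must show $g(\xi)\leq\max_{K}g$ for every non-negative symmetric div-quasiconvex $g\in\hold(\rdrei;[0,\infty))$. Testing \eqref{eq:defDivsymQC} against $\varphi_{j}$ yields
\begin{equation*}
g(\xi)\leq\int_{\T_n}g(\xi+\varphi_{j}(x))\dif x,
\end{equation*}
which is useless as it stands, since $(\varphi_{j})$ is only $L^{1}$-bounded while $g$ may grow arbitrarily fast. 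The strategy is to replace $\varphi_{j}$ by truncated fields $\widetilde\varphi_{j}\in\test$ with $\|\widetilde\varphi_{j}\|_{L^{\infty}}\leq C$ uniformly in $j$ and $|\{\widetilde\varphi_{j}\neq\varphi_{j}\}|\to 0$. Once such a truncation is available, uniform continuity of $g$ on bounded sets, combined with the smallness of $\dist(\xi+\varphi_{j},K)$ on the coincidence set, forces $\int_{\T_n}g(\xi+\widetilde\varphi_{j})\dif x\to\max_{K}g$, yielding the claim.

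The central obstacle, and the principal content of the paper, is the construction of an $L^{\infty}$-truncation compatible with both symmetry and row-wise solenoidality. A direct Whitney patching on the bad set $\{M\varphi>\lambda\}$ destroys these constraints, so I would instead pass to a Beltrami-type potential: in three dimensions, any symmetric, mean-free, row-wise divergence-free field on $\T_3$ can be written as $\varphi=\operatorname{curl}\operatorname{curl} U$ for a symmetric tensor potential $U$, unique up to a gauge. I would then perform a \emph{second-order} Acerbi--Fusco truncation on $U$: localise on the super-level set of $M(\nabla^{2}U)$, patch by Whitney cubes and suitable polynomial extensions to obtain $\widetilde U\in\sobo^{2,\infty}$ with $\|\nabla^{2}\widetilde U\|_{L^{\infty}}\lesssim\lambda$ and $|\{\widetilde U\neq U\}|\lesssim\lambda^{-1}\|\varphi\|_{L^{1}}$. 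Setting $\widetilde\varphi:=\operatorname{curl}\operatorname{curl}\widetilde U$ automatically restores the symmetry and divergence-free constraints through the differential identities, while the $\sobo^{2,\infty}$-bound on $\widetilde U$ transfers to the $L^{\infty}$-bound on $\widetilde\varphi$.

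I expect the hard part to lie entirely in this truncation. The gauge freedom of $U$ must be fixed with $L^{1}$-control coming only from $\varphi$; the second-order Acerbi--Fusco scheme is more rigid than its first-order counterpart, as second derivatives must be preserved rather than first; and one must verify the quantitative correspondence $|\{\widetilde\varphi\neq\varphi\}|\lesssim|\{\widetilde U\neq U\}|$ so that the bad set for the potential translates into an equally small bad set for the original field. Granted this truncation, the conclusion is routine: insert $\widetilde\varphi_{j}=T_{\lambda_{j}}\varphi_{j}$ into the divsym-quasiconvexity inequality for $g$, choose $\lambda_{j}\to\infty$ slowly enough that $|\{\widetilde\varphi_{j}\neq\varphi_{j}\}|\to 0$, and close the Young-measure / uniform-continuity argument outlined above to obtain $g(\xi)\leq\max_{K}g$, hence $\xi\in K^{(\infty)}$.
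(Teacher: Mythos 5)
Your reduction of Theorem~\ref{thm:main1} to the existence of a divsym-free $\lebe^{1}$-$\lebe^{\infty}$-truncation is correct and is exactly what the paper does in Section~\ref{sec:calcvar}. The gap is in your proposed construction of that truncation, and it is not a technicality that can be patched: the potential truncation scheme you outline is precisely the one that the paper identifies as \emph{failing} at $p=1$.

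Concretely, you propose to write $\varphi=\operatorname{curl}\operatorname{curl}U$ and perform a second-order Acerbi--Fusco truncation on $U$. There are two fatal obstructions. First, the map $\varphi\mapsto\nabla^{2}U$ is a zero-homogeneous Fourier multiplier operator (a Calder\'{o}n--Zygmund operator), and by Ornstein's Non-Inequality such operators do \emph{not} map $\lebe^{1}\to\lebe^{1}$. For a recovery sequence $(\varphi_{j})$ that is only bounded in $\lebe^{1}$, one cannot conclude $U_{j}\in\sobo^{2,1}(\T_{3})$ at all, so the Whitney/Acerbi--Fusco machinery on $U_{j}$ cannot even be set up. Second, and independently, even for smooth $\varphi$ the bad set of your scheme is controlled by $\mathcal{M}(\nabla^{2}U)$ rather than by $|\varphi|$ itself. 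The operator $\nabla^{2}\circ(\operatorname{curl}\operatorname{curl})^{-1}$ only maps $\lebe^{\infty}\to\mathrm{BMO}$ in general, not $\lebe^{\infty}\to\lebe^{\infty}$, so a field $\varphi$ with $\|\varphi\|_{\lebe^{\infty}}\leq\lambda$ can have an arbitrarily large bad set for the potential. This destroys the \emph{strong stability} estimate $\|\varphi-\widetilde\varphi\|_{\lebe^{1}}\lesssim\int_{\{|\varphi|>\lambda\}}|\varphi|\dif x$, which is exactly what the proof of Theorem~\ref{thm:main1} requires: since the recovery sequence satisfies $\int_{\{|\varphi_{j}|>3R\}}|\varphi_{j}|\dif x\to 0$ with a \emph{fixed} height $3R$ (determined by the compact set $K$), one fixes the truncation level at $\lambda\sim R$ and then needs the $\lebe^{1}$-error of truncation to be driven to zero by the integral over the super-level set of $\varphi_{j}$, not of its potential's Hessian. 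Proposition~\ref{prop:modtrunc} and Remark~\ref{rem:nonnatural} in the paper spell this out; the phenomenon is visible already for $1<p<\infty$, where \cite{CMO19} circumvents it through equiintegrability (Lemma~\ref{lem:simple}), but at $p=1$ no such rescue exists.

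The paper's actual contribution is a truncation $T_{\lambda}$ constructed \emph{directly} on $\varphi$, without any global Fourier multiplier. It uses a Whitney decomposition of $\{\mathcal{M}\varphi>\lambda\}$ together with flux and moment integrals $\mathfrak{B}_{\alpha}(i,j,k)$, $\mathfrak{A}_{\alpha,\beta}(i,j,k)$ over simplices spanned by the Whitney centres, and the Gau\ss--Green theorem to keep the divergence constraint at the level of the original field. The potential structure ($\operatorname{curl}\operatorname{curl}^{\top}$ as the annihilator of $\operatorname{div}$ on $\rdrei$) is still present, but only as an organising principle behind the local formulas \eqref{def:nondiagonal}--\eqref{def:diagonal}; no inverse Fourier multiplier is ever applied to an $\lebe^{1}$-map. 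Your proposal identifies the right places where difficulty must reside (gauge-fixing, transfer of bad sets), but the difficulties it flags are not surmountable by more care in the second-order Acerbi--Fusco scheme: a genuinely different construction is needed, and that is the content of Sections~\ref{sec:consttrunc}--\ref{sec:construction}.
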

Let us note that the $p$-symmetric div-quasiconvex hulls satisfy the antimonotonicity property with respect to inclusions, i.e., if $1\leq p \leq q \leq \infty$, then $K^{(q)}\subset K^{(p)}$. For Theorem~\ref{thm:main1}, it thus suffices to establish $K^{(1)}\subset K^{(\infty)}$, and this is exactly what shall be achieved in Section~\ref{sec:calcvar}. From a proof perspective, any underlying argument must use an $\lebe^{\infty}$-truncation of suitable recovery sequences, simultaneously keeping track of the differential constraint. Contrary to routine mollification, truncations leave the input functions unchanged on a large set and display an important tool in the study of nonlinear problems \cite{AcFu84,BallZhang90,FrMaSt,Frehse,Mul99,Zhang90}. It is here where Theorem~\ref{thm:main1} cannot be established by analogous means as in \cite[Sec.~3]{CMO19}, where a higher order truncation argument in the spirit of \textsc{Acerbi \& Fusco} \cite{AcFu88} and \textsc{Zhang} \cite{Zhang92} is employed. More precisely, for $1<p < q < \infty$, the critical inclusion $K^{(p)} \subset K^{(q)}$ is established in \cite{CMO19} by passing to the corresponding potentials of divsym-free fields, and as these potentials are of second order,  performing a $\sobo^{2,\infty}$-truncation on the potentials; this shall be referred to as \emph{potential truncation}. The underlying potential operators are obtained as suitable Fourier multiplier operators, which is why they only satisfy strong $\lebe^{p}$-$\lebe^{p}$-bounds for $1<p<\infty$ (cf.~Lemma~\ref{lem:proj} below). It is well-known that such Fourier multiplier operators do not map $\lebe^{1}\to\lebe^{1}$ boundedly (cf.~\textsc{Ornstein} \cite{Ornstein}), and so this approach is bound to fail in view of Theorem~\ref{thm:main1}. In the regime $1<p<\infty$, this strategy can readily be employed in the general context of $\mathscr{A}$-quasiconvex hulls in the sense of \textsc{Fonseca \& M\"{u}ller} \cite{FonMul99} (cf.~Proposition~\ref{prop:modtrunc} and Section~\ref{sec:revisit}) but is not even required for the inclusion $K^{(p)}\subset K^{(q)}$, $p<q$ and can be established by more elementary means; cf.~ Lemma~\ref{lem:simple} and its proof for the simplifying argument.
\subsection{A truncation theorem and its context}
The key tool in establishing Theorem~\ref{thm:main1} therefore consists in the following truncation result, allowing us to truncate a $\mathrm{div}$-free $\lebe^{1}$-map $u\colon\R^{3}\to\R_{\mathrm{sym}}^{3\times 3}$ while still preserving the constraint $\mathrm{div}(u)=0$:
\begin{theorem}[Main truncation theorem]\label{thm:main2}
There exists a constant $C>0$ solely depending on the underlying space dimension $n=3$ with the following property: For all $u \in \lebe^1(\R^3;\R_{\mathrm{sym}}^{3\times 3})$ with $\di(u)=0$ in $\mathscr{D}'(\R^3;\R^3)$ and all $\lambda>0$ there exists $u_{\lambda} \in \lebe^1(\R^3;\rdrei)$ satisfying the \begin{enumerate}
\item\label{item:thmmain1} \emph{$\lebe^{\infty}$-bound:} 
\begin{align*}
\Vert u_{\lambda} \Vert_{\lebe^{\infty}(\R^{3})} \leq C \lambda.
\end{align*} 
\item\label{item:thmmain2} \emph{strong stability:} 
\begin{align*}
\Vert u - u_{\lambda} \Vert_{\lebe^{1}(\R^{3})} \leq C \int_{\{\vert u \vert > \lambda\}} \vert u \vert \dif x.
\end{align*} 
\item\label{item:thmmain3} \label{property:bonus} \emph{small change:} 
\begin{align*}
\mathscr{L}^{3} (\{ u \neq u_{\lambda} \}) \leq C \lambda^{-1} \int_{\{\vert u \vert > \lambda\}} \vert u \vert \dif x.
\end{align*} 
\item\label{item:thmmain4} \emph{differential constraint:} $\di (u_{\lambda})= 0$ in $\mathscr{D}'(\R^{3};\R^{3})$.
\end{enumerate}
The same remains valid when replacing the underlying domain $\R^{3}$ by the torus $\T_{3}$.
\end{theorem}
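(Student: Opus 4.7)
My plan is to adapt the classical Calder\'on-Zygmund / Acerbi-Fusco scheme for $\lebe^{\infty}$-truncations of $\lebe^{1}$-maps, augmented by a cube-wise Bogovskii-type corrector that restores the symmetric-solenoidal constraint.

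I would begin by setting $\mathcal{O}_{\lambda} := \{Mu > \lambda\}$ with $M$ the componentwise Hardy-Littlewood maximal operator. The weak-$(1,1)$ inequality yields $\mathscr{L}^{3}(\mathcal{O}_{\lambda}) \lesssim \lambda^{-1} \int_{\{|u|>\lambda/2\}} |u| \dif x$, while Lebesgue differentiation gives $|u| \le C\lambda$ a.e.\ on $\mathcal{O}_{\lambda}^{c}$. Whitney-decompose $\mathcal{O}_{\lambda} = \bigcup_{j} Q_{j}$ into cubes of side $\ell_{j}$ with enlargements $Q_{j}^{*}$ of bounded overlap and a subordinate smooth partition of unity $(\phi_{j})$ satisfying $|\nabla \phi_{j}| \lesssim \ell_{j}^{-1}$ and $\sum_{j} \phi_{j} = 1$ on $\mathcal{O}_{\lambda}$. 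On each $Q_{j}^{*}$ I pick a bounded, symmetric, divergence-free local replacement $v_{j}$, either the constant $\fint_{Q_{j}^{*}} u$ or the mollification $u \ast \rho_{\ell_{j}}$; radial mollification preserves both symmetry and $\di v_{j} = 0$, and the Whitney property supplies a good point $y_{j} \in \mathcal{O}_{\lambda}^{c}$ at distance $\sim \ell_{j}$ from $Q_{j}^{*}$, so that $\|v_{j}\|_{\lebe^{\infty}(Q_{j}^{*})} \lesssim Mu(y_{j}) \le \lambda$.

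Next, set
\[
\tilde u_{\lambda} := u - \sum_{j} \phi_{j} (u - v_{j}),
\]
which is symmetric, bounded by $C\lambda$, and coincides with $u$ off $\mathcal{O}_{\lambda}$. Since $\di u = \di v_{j} = 0$ and $\sum_{j} \nabla \phi_{j} = 0$ in the interior of $\mathcal{O}_{\lambda}$, the distributional divergence collapses to $\di \tilde u_{\lambda} = -G$ with $G := \sum_{j}(u-v_{j})\nabla \phi_{j}$ obeying the pointwise bound $|G(x)| \lesssim \lambda/\ell(x)$ on $\mathcal{O}_{\lambda}$ (where $\ell(x) \sim \ell_{j}$ on $Q_{j}$), because the $u$-contributions cancel internally and the transition zone near $\partial \mathcal{O}_{\lambda}$ lies within bounded distance of $\mathcal{O}_{\lambda}^{c}$. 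The heart of the argument is then the construction of a symmetric matrix-valued corrector $\beta = \sum_{j} \beta_{j}$, with each $\beta_{j}$ supported in $Q_{j}^{*}$, satisfying $\di \beta_{j} = G_{j}$ for the natural cube-localised piece $G_{j}$ and obeying the scale-invariant bound
\[
\|\beta_{j}\|_{\lebe^{\infty}(Q_{j}^{*})} \leq C\,\ell_{j}\,\|G_{j}\|_{\lebe^{\infty}(Q_{j}^{*})} \lesssim \lambda.
\]
Given such a corrector, $u_{\lambda} := \tilde u_{\lambda} + \beta$ is the sought truncation: the $\lebe^{\infty}$-bound \ref{item:thmmain1} follows from the two bounded pieces; \ref{item:thmmain2} and \ref{item:thmmain3} follow from $\{u \neq u_{\lambda}\} \subset \mathcal{O}_{\lambda}$, the weak-$(1,1)$ bound and bounded-overlap estimates in the spirit of Acerbi-Fusco; and \ref{item:thmmain4} holds by direct computation. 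The torus version is obtained by an identical localisation on $\T_{3}$.

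The principal obstacle is the symmetric-matrix Bogovskii operator with the scale-invariant $\lebe^{\infty}$-estimate. The classical (vector) Bogovskii construction solves $\di R = f$ with $R \in \sobo_{0}^{1,p}(Q)$ but does not return a \emph{symmetric} $R$; naive column-wise application breaks symmetry, and forcing symmetry introduces an overdetermination that is not locally solvable for arbitrary right-hand sides. The restriction $n = 3$ in Theorem~\ref{thm:main2} is the decisive algebraic input: three-dimensional symmetric div-free tensors admit a Saint-Venant-type second-order potential, which I would combine with a two-step procedure -- first a vector Bogovskii solve, then a compactly supported symmetrisation via a linear-elasticity-type equation on the cube -- to produce $\beta_{j}$ with the correct scaling. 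Extracting the scale-invariant $\lebe^{\infty}$-bound under the structural assumption that $G_{j}$ has vanishing mean on $Q_{j}^{*}$ is the genuinely new technical core; once this is in hand, the remaining assembly and verification of \ref{item:thmmain1}--\ref{item:thmmain4} amount to routine bookkeeping.
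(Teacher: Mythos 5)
Your proposal is, in outline, exactly the Bogovski\u{\i}-corrector strategy of \textsc{Breit, Diening \& Fuchs} \cite{BDF12}, which the paper explicitly discusses and sets aside in Remark~\ref{rem:BDF}: a naive Whitney-type truncation $\tilde u_{\lambda}$ is formed first, and the divergence overshoot $G = -\di\tilde u_{\lambda}$ is to be compensated by a compactly supported corrector. The paper's authors state that while this strategy is ``in principle imaginable to work,'' they do not see how to carry it out at the present $\lebe^{1}$ regularity level for \emph{symmetric} matrix fields --- and your proposal does not resolve that doubt; it restates it. The step you call ``the genuinely new technical core'' is not constructed, only sketched, and the rest is then labelled ``routine bookkeeping.'' That is the wrong emphasis: the corrector \emph{is} the theorem, and without it the argument is incomplete.

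Two concrete gaps. First, even if a cube-local scale-invariant $\lebe^{\infty}$ estimate $\|\beta_{j}\|_{\lebe^{\infty}(Q_{j}^{*})}\lesssim \ell_{j}\|G_{j}\|_{\lebe^{\infty}}$ is plausible for the \emph{vector} Bogovski\u{\i} operator in $n=3$ (the kernel there is of degree $-2$, hence locally integrable), you need a \emph{symmetric} matrix-valued solution $\beta_{j}\in \R^{3\times 3}_{\mathrm{sym}}$, compactly supported in $Q_{j}^{*}$, with $\di\beta_{j}=G_{j}$ and the same bound. Any construction via potentials of constant-rank operators (e.g.\ through Lemma~\ref{lem:potential}) inevitably passes through zero-homogeneous Fourier multipliers, which do not map $\lebe^{\infty}\to\lebe^{\infty}$ by Ornstein's Non-Inequality --- the exact obstruction that motivates the paper in the first place. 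You offer a two-step ``vector solve then symmetrisation via linear elasticity'' plan, but give no reason this preserves compact support or the required endpoint bound. Second, compactly supported solvability of $\di\beta_{j}=G_{j}$ in the symmetric class requires $G_{j}$ to be $\lebe^{2}$-orthogonal to \emph{all} infinitesimal rigid motions, since the formal adjoint is the symmetric gradient $\varepsilon$ whose kernel is six-dimensional: both $\int_{Q_{j}^{*}}G_{j}\dif x = 0$ and the torque conditions $\int_{Q_{j}^{*}} G_{j}\cdot(b\times x)\dif x = 0$ for all $b\in\R^{3}$ must hold. Your decomposition of $G$ into ``natural cube-localised pieces'' is only argued to have vanishing mean; arranging the torque conditions cube-by-cube is a nontrivial problem you do not address.

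For contrast, the paper's actual proof (Sections~\ref{sec:consttrunc}--\ref{sec:construction}) avoids any solution operator. It redefines $u$ on $\mathcal{O}_{\lambda}$ by an \emph{explicit} Whitney-type formula \eqref{eq:deftruncation} built from simplex flux integrals $\mathfrak{B}_{\alpha}(i,j,k)$ and first moments $\mathfrak{A}_{\alpha,\beta}(i,j,k)$ over triangles $\langle x_{i},x_{j},x_{k}\rangle$, whose algebraic identities (Lemma~\ref{lem:ABprops}, driven by the Gau\ss--Green theorem and the exact sequence $\varepsilon\to\curl\curl^{\top}\to\di$) make the divergence of the truncation vanish \emph{identically} on $\mathcal{O}_{\lambda}$. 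The $\lebe^{\infty}$-bound then comes from the maximal function controlling the simplex integrals (Lemma~\ref{lem:JogiLoew}), not from any operator-norm estimate. This geometric ansatz is precisely what lets them sidestep the Ornstein obstruction that would sink the Bogovski\u{\i} route.
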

The way in which Theorem~\ref{thm:main2} implies Theorem~\ref{thm:main1} can be accomplished by  analogous means as in \cite{CMO19} (also see the discussion by the third author \cite{Schiffer21}), and is sketched for the reader's convenience in Section~\ref{sec:calcvar}. Here we heavily rely on the \emph{strong stability property} from item~\ref{item:thmmain2}, without which the proof of Theorem~\ref{thm:main1} is not clear to us. The detailled construction that underlies the proof of Theorem~\ref{thm:main2}, reminiscent of a geometric version of the \textsc{Whitney} smoothing or extension procedure \cite{Whitney}, is explained in Section~\ref{sec:consttrunc} and carried out in detail in Section~\ref{sec:construction}. Here we understand by \emph{geometric} that the construction is directly taylored to the problem at our disposal, meaning that the solenoidality constraint $\mathrm{div}(u)=0$ is visible in our construction in terms of the Gau\ss -Green theorem on certain simplices. 

Working on a higher a priori regularity level, \emph{Lipschitz truncations} that preserve solenoi- dality constraints are not new and have been studied most notably by \textsc{Diening} et al. \cite{BDF12,BDS13}, originally developed for problems from mathematical fluid mechanics and since then having been fruitfully used in a variety of related problems; see, e.g., \textsc{S\"{u}li} et al.  \cite{Die13,SuliTscherpel}. Let us note that the two key approaches in \cite{BDF12,BDS13} either hinge on locally correcting divergence contributions on certain bad sets \cite{BDF12} or performing the potential truncation \cite{BDS13}. Whereas the ansatz in \cite{BDF12} in principle  is imaginable to work in the present setting apart from technical intricacies (cf.~Remark~\ref{rem:BDF}), the key drawback of the potential truncation is the non-availability of the strong stability estimate. This is essentially a consequence of singular integrals only mapping $\lebe^{\infty}\to\bmo$ in general but \emph{not} $\lebe^{\infty}\to\lebe^{\infty}$; see Section~\ref{sec:revisit} and Proposition~\ref{prop:modtrunc}, where the corresponding potential truncations are revisited and discussed in the general framework of constant rank operators $\mathscr{A}$ a l\'{a} \textsc{Schulenberger \& Wilcox} \cite{SchulWilcox} or \textsc{Murat} \cite{Murat}.

\subsection{Organisation of the paper} 
Apart from this introductory section, the paper is organised as follows: In Section~\ref{sec:prelims}, we fix notation and gather auxiliary material on maximal operators and basic facts from harmonic analysis. Section~\ref{sec:consttrunc} then explains the idea underlying the construction employed in the proof of Theorem~\ref{thm:main2}, and is then carried out in detail in Section~\ref{sec:construction}. Section~\ref{sec:calcvar} is devoted to the proof of Theorem~\ref{thm:main1}, and the paper is concluded in Section~\ref{sec:revisit} by revisiting potential truncations. The Appendix, Section~\ref{sec:appendix}, gathers various instrumental computations that underlie some of the results presented in Section~\ref{sec:construction}.
{\small
\subsection*{Acknowledgment} 
The authors are grateful to \textsc{Stefan M\"{u}ller} for bringing our attention to the theme considered in the paper. The research has received funding from the German Research Association (DFG) via the International Research Training Group 2235 'Searching for the regular in the irregular: Analysis of singular and random systems' (L.B.), the Hector foundation (F.G.) and the DFG through the graduate school BIGS of the Hausdorff Center for Mathematics (GZ EXC 59 and 2047/1, Projekt-ID 390685813) (S.Sc.).}
\section{Preliminaries}\label{sec:prelims}
\subsection{Notation}
The linear operators between two finite-dimensional real vector spaces $V,W$ are denoted $\mathscr{L}(V;W)$. We denote $\mathscr{L}^{n}$ and $\mathscr{H}^{n-1}$ the $n$-dimensional Lebesgue or $(n-1)$-dimensional Hausdorff measures, respectively. For notational brevity, we shall also write $\dif^{n-1}=\dif\mathscr{H}^{n-1}$. Given $n$- or $(n-1)$-dimensional measurable subsets $\Omega$ and $\Sigma$ of $\R^{n}$ with $\mathscr{L}^{n}(\Omega),\mathscr{H}^{n-1}(\Sigma)\in (0,\infty)$, respectively, we use the shorthand
\begin{align*}
\dashint_{\Omega}u\dif x := \frac{1}{\mathscr{L}^{n}(\Omega)}\int_{\Omega}u\dif x\;\;\;\text{and}\;\;\;\dashint_{\Sigma}v\dif^{n-1}x:=\frac{1}{\mathscr{H}^{n-1}(\Sigma)}\int_{\Sigma}v\dif^{n-1}x
\end{align*}
for $\mathscr{L}^{n}$- or $\mathscr{H}^{n-1}$-measurable maps $u\colon\Omega\to\R^{m}$ and $v\colon\Sigma\to\R^{m}$. As we shall mostly assume $n=3$, we denote $\ball_{r}(z)$ the open ball of radius $r$ centered at $z\in\R^{3}$, whereas we reserve the notation $\mathbb{B}_{r}(z)$ to denote the corresponding open balls in the symmetric $(3\times 3)$-matrices $\rdrei$. By \emph{cubes} $Q$ we understand non-degenerate cubes throughout, and use $\ell(Q)$ to denote their sidelength. Lastly, for $x_{1},...,x_{j}\in\R^{3}$, we denote $\langle x_{1},...,x_{j}\rangle$ the convex hull of the vectors $x_{1},...,x_{j}$, and if $x_{1},x_{2},x_{3}$ do not lie on a joint line, $\mathrm{aff}(x_{1},x_{2},x_{3})$ the affine hyperplane containing $x_{1},x_{2},x_{3}$. 
\subsection{Maximal operator, bad sets and Whitney covers}\label{sec:maxop}
For a finite dimensional real vector space $V$, $w\in\lebe^{1}(\R^{n};V)$ and $R>0$, we recall the (restricted) \emph{centered Hardy-Littlewood maximal operators} to be defined by 
\begin{align}\label{eq:HLMO}
\begin{split}
&\mathcal{M}_{R}w(x):=\sup_{0<r<R}\dashint_{\ball_{r}(x)}|w|\dif y,\qquad x\in\R^{n}, \\ 
&\mathcal{M}w(x):= \sup_{r>0}\dashint_{\ball_{r}(x)}|w|\dif y,\qquad x\in\R^{n}.
\end{split}
\end{align}
Note that, by lower semicontinuity of $\mathcal{M}_{R}w$, the superlevel sets $\{\mathcal{M}_{R}w>\lambda\}$ are open for all $\lambda>0$. Moreover, we record that $\mathcal{M}$ is of weak-$(1,1)$-type, meaning that there exists $c=c(n)>0$ such that 
\begin{align}\label{eq:weak11}
\mathscr{L}^{n}(\{\mathcal{M}w>\lambda\})\leq \frac{c}{\lambda}\|w\|_{\lebe^{1}(\R^{n})}\qquad\text{for all}\;w\in\lebe^{1}(\R^{n};V). 
\end{align}
See \cite{Grafakos,Stein} for more background information. Now let $\Omega\subset\R^{n}$ be open. Then there exists a \emph{Whitney cover} $\mathscr{W}=(Q_{j})$ for $\Omega$. By this we understand a sequence of open cubes $Q_{j}$ with the following properties: 
\begin{enumerate}[label={(W\arabic{*})},start=1]
\item\label{item:W1} $\Omega=\bigcup_{j\in\mathbb{N}}Q_{j}$.
\item\label{item:W2} $\frac{1}{5}\ell(Q_{j})\leq\mathrm{dist}(Q_{j},\Omega^{\complement})\leq 5\ell(Q_{j})$ for all $j\in\mathbb{N}$. 
\item\label{item:W3} \emph{Finite overlap:} There exists a number $\mathtt{N}=\mathtt{N}(n)>0$ such that at most $\mathtt{N}$ elements of $\mathscr{W}$ overlap; i.e., for each $i\in\mathbb{N}$, 
\begin{align*}
|\{j\in\mathbb{N}\colon\;Q_{j}\in\mathscr{W}\;\text{and}\,Q_{i}\cap Q_{j}\neq\emptyset\}|\leq \mathtt{N}. 
\end{align*}
\item\label{item:W4} \emph{Comparability for touching cubes:} There exists a constant $c(n)>0$ such that if $Q_{i},Q_{j}\in\mathscr{W}$ satisfy $Q_{i}\cap Q_{j}\neq\emptyset$, then 
\begin{align*}
\frac{1}{c(n)}\ell(Q_{i})\leq \ell(Q_{j})\leq c(n)\ell(Q_{i}). 
\end{align*}
\end{enumerate}
Whenever such a Whitney cover is considered, we tacitly understand $x_{j}$ to be the \emph{centre} of the corresponding cube $Q_{j}$. Based on the Whitney cover $\mathscr{W}$ from above, we choose a partition of unity $(\varphi_{j})$ subject to $\mathscr{W}$ with the following properties:
\begin{enumerate}[label={(P\arabic{*})},start=1]
\item\label{item:P1} For any $j\in\mathbb{N}$, $\varphi_{j}\in\hold_{c}^{\infty}(Q_{j};[0,1])$. 
\item\label{item:P2} $\sum_{j\in\mathbb{N}}\varphi_{j}=1$ in $\Omega$.
\item\label{item:P3} For each $l\in\mathbb{N}$, there exists a constant $c=c(n,l)>0$ such that 
\begin{align*}
|\nabla^{l}\varphi_{j}|\leq\frac{c}{\ell(Q_{j})^{l}}\qquad\text{for all}\;j\in\mathbb{N}.
\end{align*}
\end{enumerate}
\subsection{Differential operators and projection maps}\label{sec:HA}
For the following sections, we require some terminology for differential operators and a suitable projection property to be gathered in the sequel. Let $\mA$ be a constant coefficient, linear and homogeneous differential operator of order $k\in\mathbb{N}$ on $\R^{n}$ (or $\T_{n}$) between $\R^{d}$ and $\R^{N}$, so $\mA$ has a representation 
\begin{align}\label{eq:Aform}
\mA u = \sum_{|\alpha|=k}\mA_{\alpha}\partial^{\alpha}u,\qquad u\colon\R^{n}\to\R^{d},
\end{align}
with fixed $\mA_{\alpha}\in\mathscr{Lin}(\R^{d};\R^{N})$ for $|\alpha|=k$. Following \cite{Murat,SchulWilcox} we say that $\mA$ has \emph{constant rank} (in $\R$) provided the rank of the Fourier symbol $\mA[\xi]=\sum_{|\alpha|=k}\mA_{\alpha}\xi^{\alpha}\colon \R^{d}\to\R^{N}$ is independent of $\xi\in\R^{n}\setminus\{0\}$.  A constant coefficient differential operator $\mathbb{A}$ of order $j\in\mathbb{N}$ on $\R^{n}$ (or $\T_{n}$) between $\R^{\ell}$ and $\R^{d}$ consequently is called a \emph{potential} of $\mathscr{A}$ provided for each $\xi\in\R^{n}\setminus\{0\}$ the Fourier symbol sequence 
\begin{align*}
\R^{\ell}\stackrel{\mathbb{A}[\xi]}{\longrightarrow}\R^{d}\stackrel{\mathscr{A}[\xi]}{\longrightarrow}\R^{N} 
\end{align*}
is exact at every $\xi\in\R^{n}\setminus\{0\}$, i.e., $\mathbb{A}[\xi](\R^{\ell})=\ker(\mA[\xi])$ for each such $\xi$. We moreover say that $\mA$ has \emph{constant rank} (in $\mathbb{C}$) provided $\mA[\xi]\colon\mathbb{C}^{d}\to\mathbb{C}^{N}$ has rank independent of $\xi\in\mathbb{C}^{n}\setminus\{0\}$. If we only speak of \emph{constant rank}, then we tacitly understand constant rank in $\R$.
In Section~\ref{sec:revisit}, we require the following two auxiliary results, ensuring both the existence of potentials and suitable projection operators. 
\begin{lemma}[Existence of potentials, {\cite[Thm.~1, Lem.~5]{Raita19}}]\label{lem:potential} Let $\mA$ be a differential operator with constant rank over $\R$. Then $\mA$ possesses a potential $\mathbb{A}$. Moreover, if $u\in\hold^{\infty}(\T_{n};\R^{d})$ satisfies $\int_{\T_{n}}u\dif x = 0$ and $\mA u=0$, there exists $v\in\hold^{\infty}(\T_{n};\R^{\ell})$ with $\mathbb{A}v=u$. Equally, for each $u\in\mathscr{S}(\R^{n};\R^{d})$ with $\mA u=0$ there exists $v\in\mathscr{S}(\R^{n};\R^{\ell})$ with $\mathbb{A}v=u$.
\end{lemma}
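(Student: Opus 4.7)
The plan is to split the proof into two stages: first, the algebraic construction of the potential symbol $\mathbb{A}[\xi]$ from the constant-rank hypothesis on $\mathscr{A}[\xi]$; second, a Fourier-theoretic solvability argument for $\mathbb{A}v=u$ in each of the two function-space settings.

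For the construction of $\mathbb{A}$, the starting point is the orthogonal projection
\[
P[\xi]=\mathrm{Id}_{\mathbb{R}^{d}}-\mathscr{A}[\xi]^{*}\bigl(\mathscr{A}[\xi]\mathscr{A}[\xi]^{*}\bigr)^{\dagger}\mathscr{A}[\xi]
\]
onto $\ker\mathscr{A}[\xi]$, which is well-defined and smooth on $\mathbb{R}^{n}\setminus\{0\}$ by the constant-rank assumption. Since $\mathscr{A}[\xi]\mathscr{A}[\xi]^{*}$ has constant rank $m$ on $\mathbb{R}^{n}\setminus\{0\}$, its pseudoinverse can be written as the ratio of a matrix polynomial and a nonvanishing homogeneous scalar polynomial, for instance the sum of squares of all principal $m\times m$ minors. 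Clearing this denominator and absorbing a further scalar factor $|\xi|^{2N}$ if necessary turns $|\xi|^{2N}P[\xi]$ into a homogeneous polynomial matrix symbol; factoring this through its range yields a homogeneous polynomial symbol $\mathbb{A}[\xi]$ of some order $j$ with $\mathbb{A}[\xi]\mathbb{R}^{\ell}=\ker\mathscr{A}[\xi]$ for $\xi\neq 0$, and the associated differential operator is the desired potential $\mathbb{A}$.

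On the torus, I expand $u=\sum_{k\in\mathbb{Z}^{n}}\widehat{u}(k)e^{2\pi\mathrm{i}k\cdot x}$. The hypothesis $\int_{\mathbb{T}_{n}}u\,\mathrm{d}x=0$ yields $\widehat{u}(0)=0$, while $\mathscr{A}u=0$ gives $\widehat{u}(k)\in\ker\mathscr{A}[2\pi\mathrm{i}k]=\mathbb{A}[2\pi\mathrm{i}k]\mathbb{R}^{\ell}$ for each $k\neq 0$. Setting $\widehat{v}(k):=\mathbb{A}[2\pi\mathrm{i}k]^{\dagger}\widehat{u}(k)$ for $k\neq 0$ and $\widehat{v}(0):=0$ produces a selection for which the constant-rank property delivers the bound $|\widehat{v}(k)|\leq C|k|^{-j}|\widehat{u}(k)|$; rapid decay of $\widehat{u}$ thus transfers to $\widehat{v}$, and $v\in\hold^{\infty}(\mathbb{T}_{n};\mathbb{R}^{\ell})$ satisfies $\mathbb{A}v=u$ by Fourier inversion. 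The Schwartz case follows the same scheme on the Fourier transform side, with an additional cutoff $\chi\in\hold_{c}^{\infty}(\mathbb{R}^{n};[0,1])$ equal to $1$ near the origin to isolate the singularity of $\mathbb{A}[\xi]^{\dagger}$ at $\xi=0$; the high-frequency piece $(1-\chi)\mathbb{A}[\xi]^{\dagger}\widehat{u}$ is Schwartz by smoothness and homogeneity of degree $-j$ of the pseudoinverse away from the origin, while the compactly supported low-frequency piece is handled by producing a compactly supported smooth local selection into $\ker\mathscr{A}[\xi]$ from the exactness of the symbol complex and inverse-Fourier-transforming.

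The main obstacle is the algebraic step: realising the projection onto $\ker\mathscr{A}[\xi]$ as the image of a \emph{polynomial} matrix symbol (rather than merely a smooth or rational one) is where the constant-rank assumption over $\mathbb{R}$ is essential, and this is the core content of Raita's theorem. Once $\mathbb{A}$ is in hand, the Fourier steps on $\mathbb{T}_{n}$ and $\mathbb{R}^{n}$ are routine.
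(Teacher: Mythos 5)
The paper does not prove this lemma: it is cited verbatim from Raiță \cite{Raita19}, so there is no in-paper argument to compare against. Reviewing your proposal on its own merits, the construction of the polynomial potential symbol $\mathbb{A}[\xi]$ (Decell-type rational formula for the Moore--Penrose pseudoinverse, clearing the homogeneous nonvanishing denominator) and the torus part of the solvability argument are sound and indeed reflect the approach in \cite{Raita19}; the mean-zero hypothesis removes the $k=0$ mode, and the constant-rank homogeneity of $\mathbb{A}[\xi]^{\dagger}$ of degree $-j$ gives $\vert\widehat{v}(k)\vert\lesssim\vert k\vert^{-j}\vert\widehat{u}(k)\vert$ for $k\neq 0$, whence $v\in\hold^{\infty}(\T_n)$.

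The Schwartz case, however, has a genuine gap that your ``low-frequency local selection'' does not close. The issue is that $\mathbb{A}[\xi]^{\dagger}\widehat{u}(\xi)$ is in general not even continuous at $\xi=0$, despite the constraint $\mathscr{A}u=0$. Take $\mathscr{A}=\operatorname{div}$, $\mathbb{A}=\operatorname{curl}$ on $\R^{3}$: for divergence-free Schwartz $u$ one has $\widehat{u}(0)=0$ and $D\widehat{u}(0)$ antisymmetric, so $\widehat{u}(\xi)=\mathrm{i}(b\times\xi)+O(\vert\xi\vert^{2})$ with $b$ possibly nonzero, and the pseudoinverse choice gives
\[
\mathbb{A}[\xi]^{\dagger}\widehat{u}(\xi)\;=\;-\mathrm{i}\,b+\mathrm{i}\,\frac{(\xi\cdot b)}{\vert\xi\vert^{2}}\,\xi+O(\vert\xi\vert),
\]
whose second term is homogeneous of degree zero and has direction-dependent limits at the origin. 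Thus even though $\widehat{u}(0)=0$, the pseudoinverse selection fails to be smooth (or even continuous) at $0$. The only way to repair this is to add a carefully chosen element of $\ker\mathbb{A}[\xi]$ depending on $\widehat{u}$; for instance in the example above one must add $-\mathrm{i}(\xi\cdot b)\xi/\vert\xi\vert^{2}$, and more generally the choice has to match the constrained Taylor expansion of $\widehat{u}$ at $0$ order by order. Your appeal to ``exactness of the symbol complex'' cannot produce this: the symbol complex is exact only on $\R^{n}\setminus\{0\}$, and $\mathbb{A}[0]=0$ since $\mathbb{A}$ has order $j\geq 1$, so there is no surjectivity at the origin to select against. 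This is precisely where the real work in the Schwartz part of \cite{Raita19}'s Lemma 5 lies, and the present sketch does not supply it. (For the purposes of this paper the omission is not fatal, since only the torus version of the lemma is actually invoked, but the proposal as written does not prove the full statement.)
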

\begin{lemma}[Projection maps on the torus, {\cite[Lem.~2.14]{FonMul99}}]\label{lem:proj}
Let $1<p<\infty$ and let $\mA$ be a differential operator of order $k$ with constant rank in $\R$. Then there is a bounded, linear projection map $P_{\mA} \colon \lebe^p(\T_n;\R^d) \to \lebe^p(\T_n;\R^d)$ with the following properties: 
\begin{enumerate}
    \item\label{item:proj1} $P_{\mA} u \in \ker \mA$ and $P_{\mA} \circ P_{\mA} = P_{\mA}$.
    \item\label{item:proj2} $\Vert u - P_{\mA} u \Vert_{\lebe^{p}(\T_{n})} \leq C_{\mA,p} \Vert \mA u \Vert_{\sobo^{-k,p}(\T_{n})}$ whenever $\dashint_{\T_{n}}u\dif x =0$.
    \item\label{item:proj3} If $(u_j)\subset\lebe^{p}(\T_{n};\R^{d})$ is bounded and $p$-equiintegrable, i.e., \begin{align*}
        \lim_{\varepsilon \searrow 0} \left( \sup_{j \in \N} \sup_{E \colon \mathscr{L}^n(E) < \varepsilon} \int_{E} \vert u_j \vert^p \dif x\right) =0,
    \end{align*}
then also $(P_{\mA} u_j )$ is $p$-equiintegrable.
\end{enumerate}
\end{lemma}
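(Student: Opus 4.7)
The plan is to realise $P_{\mA}$ as a Fourier multiplier operator on $\T_n$ whose symbol, at each non-zero frequency, is the orthogonal projection onto the kernel of the principal symbol of $\mA$. Concretely, for $\xi\in\mathbb{Z}^n\setminus\{0\}$ I would let $\pi_\xi\in\mathscr{L}(\R^d;\R^d)$ denote the orthogonal projection onto $\ker \mA[\xi]$, set $\pi_0:=\id$ (which is consistent with $P_{\mA}u\in\ker\mA$ since $\mA[0]=0$), and define
\[
P_{\mA} u(x):=\sum_{\xi\in\mathbb{Z}^n}\pi_\xi\,\widehat{u}(\xi)\,\e^{2\pi\imag\,\xi\cdot x}.
\]
The $k$-homogeneity of $\mA[\cdot]$ yields $\pi_{\lambda\xi}=\pi_\xi$ for $\lambda>0$, while the constant-rank hypothesis in $\R$ forces the Moore--Penrose pseudo-inverse $\xi\mapsto\mA[\xi]^{\dagger}$, and therefore $\xi\mapsto\pi_\xi=\id-\mA[\xi]^{\dagger}\mA[\xi]$, to extend as smooth, $0$-homogeneous maps to $\R^n\setminus\{0\}$. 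The Mihlin multiplier theorem on the torus then yields that $P_{\mA}$ extends to a bounded linear operator on $\lebe^p(\T_n;\R^d)$ for every $1<p<\infty$; item~(a) follows at once from $\mA[\xi]\pi_\xi=0$ and $\pi_\xi^2=\pi_\xi$ on the Fourier side.

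For item~(b) I would first note that $\mA u$ has vanishing mean, because $\mA$ is a homogeneous differential operator of positive order on the torus. By construction $\mA[\xi]^{\dagger}\mA[\xi]=\id-\pi_\xi$, and writing $\mA[\xi]^{\dagger}=|\xi|^{-k}m(\xi)$ with $m(\xi):=|\xi|^{k}\mA[\xi]^{\dagger}$ smooth and $0$-homogeneous on $\R^n\setminus\{0\}$, I obtain for every $\xi\neq 0$
\[
\widehat{u-P_{\mA} u}(\xi) \;=\; (\id-\pi_\xi)\widehat{u}(\xi) \;=\; c_k\,|\xi|^{-k}\,m(\xi)\,\widehat{\mA u}(\xi),
\]
where $c_k$ stems from the factor $(2\pi\imag)^k$ in the symbol of $\mA$. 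An application of Mihlin's theorem to the multiplier $m$, combined with the standard equivalence $\|g\|_{\sobo^{-k,p}(\T_n)}\sim\|J^{-k}g\|_{\lebe^{p}(\T_n)}$ for mean-zero distributions $g$ on the torus (where $\widehat{J^{-k}g}(\xi):=|\xi|^{-k}\widehat{g}(\xi)$ for $\xi\neq 0$), then yields the desired bound $\|u-P_{\mA}u\|_{\lebe^p(\T_n)}\leq C_{\mA,p}\|\mA u\|_{\sobo^{-k,p}(\T_n)}$.

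For item~(c), since $P_{\mA}$ is a Fourier multiplier operator with smooth $0$-homogeneous symbol, it suffices to show that such operators preserve $p$-equiintegrability. Given a $p$-equiintegrable sequence $(u_j)$ and $\varepsilon>0$, the Vall\'ee--Poussin criterion supplies $M>0$ with $\sup_{j}\int_{\{|u_j|>M\}}|u_j|^p\dif x<\varepsilon$. Splitting $u_j=u_j^{\leq M}+u_j^{>M}$ with $u_j^{\leq M}:=u_j\ind_{\{|u_j|\leq M\}}$, the $\lebe^p$-bound on $P_{\mA}$ gives $\|P_{\mA} u_j^{>M}\|_{\lebe^{p}}^p\leq C\varepsilon$. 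The truncated part $u_j^{\leq M}$ is uniformly bounded in $\lebe^q(\T_n)$ for any fixed $q\in(p,\infty)$, so the $\lebe^q$-bound on $P_{\mA}$ combined with H\"older's inequality yields $\int_{E}|P_{\mA} u_j^{\leq M}|^p\dif x\leq C_{q,M}\mathscr{L}^n(E)^{1-p/q}$, which vanishes uniformly in $j$ as $\mathscr{L}^n(E)\to 0$. Combining both contributions delivers the equiintegrability of $(P_{\mA} u_j)$. The principal obstacle throughout will be the smoothness and $0$-homogeneity of the symbol $\pi_\xi$, which rest decisively on the constant-rank hypothesis; the Mihlin theorem underpinning both (b) and (c) moreover enforces the exclusion of the endpoints $p\in\{1,\infty\}$, exactly the restriction motivating the geometric truncation pursued in the remainder of the paper.
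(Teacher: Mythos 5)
The paper itself offers no proof of Lemma~\ref{lem:proj}, but merely cites \cite[Lem.~2.14]{FonMul99}; your argument is correct and reproduces the standard Fonseca--M\"{u}ller construction: realise $P_{\mA}$ as the Fourier multiplier whose symbol at $\xi\neq 0$ is the orthogonal projection onto $\ker\mA[\xi]$, use the constant-rank hypothesis to get smoothness and $0$-homogeneity of $\xi\mapsto\pi_\xi=\id-\mA[\xi]^{\dagger}\mA[\xi]$ (and hence Mihlin boundedness for $1<p<\infty$), deduce~\ref{item:proj2} from the identity $\id-\pi_\xi=\mA[\xi]^{\dagger}\mA[\xi]$ together with the equivalence of $\|\cdot\|_{\sobo^{-k,p}}$ with the $|\xi|^{-k}$-multiplier norm for mean-zero data, and deduce~\ref{item:proj3} by the $L^{\infty}$/$L^{q}$-truncation-and-interpolation argument. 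All steps are sound; this is essentially the proof in the cited reference.
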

As alluded to in the introduction, Lemma~\ref{lem:proj} does not extend to $p=1$ in general, the reason being \textsc{Ornstein}'s Non-Inequality \cite{Ornstein}; also see \cite{CFM,KirchheimKristensen} for more recent approaches to the matter and \textsc{Grafakos} \cite[Thm.~4.3.4]{Grafakos} for a full characterisation of $\lebe^{1}$-multipliers.
\section{On the construction of $\mathrm{divsym}$-free truncations}\label{sec:consttrunc}
Before embarking on the proof of Theorem~\ref{thm:main2}, we comment on the underlying idea of the proof. To this end, we streamline terminology as follows. Let $\Omega$ either be $\T_{n}$ or $\R^{n}$.  Given a constant rank differential operator $\mathbb{A}$ on $\Omega$ between $\R^{\ell}$ and $\R^{d}$ and $1\leq p\leq\infty$, we define Sobolev-type spaces $\sobo^{\mathbb{A},p}(\Omega):=\{u\in\lebe^{p}(\Omega;\R^{\ell})\colon\;\mathbb{A}u\in\lebe^{p}(\Omega;\R^{d})\}$. A family of operators $(S_{\lambda})_{\lambda>0}$ with $S_{\lambda}\colon\sobo^{\mathbb{A},p}(\Omega)\to\sobo^{\mathbb{A},\infty}(\Omega)$ is called an $\sobo^{\mathbb{A},p}$-$\sobo^{\mathbb{A},\infty}$\emph{-truncation} provided there exists a constant $c=c(\mathbb{A},p)>0$ such that, for all $u\in\sobo^{\mathbb{A},p}(\Omega)$ and $\lambda>0$, 
\begin{enumerate}
\item  $\Vert S_{\lambda}u \Vert_{\lebe^{\infty}(\Omega)} + \Vert \mathbb{A}S_{\lambda}u \Vert_{\lebe^{\infty}(\Omega)} \leq c\lambda$.
\item $\Vert u- S_{\lambda}u\Vert_{\lebe^p(\Omega)} + \Vert \mathbb{A}u - \mathbb{A}S_{\lambda}u \Vert_{\lebe^p (\Omega)} \leq c\int_{\{\vert u \vert + \vert \mathbb{A} u \vert > \lambda \}} \vert u \vert^{p} + \vert \mathbb{A}u \vert^{p} \dif x$.
 \item $\mathscr{L}^n(\{ u \neq S_{\lambda}u\}) \leq \frac{c}{\lambda^{p}}\int_{\{\vert u \vert + \vert \mathbb{A}u \vert > \lambda \}} \vert u \vert^{p} + \vert \mathbb{A}u \vert^{p} \dif x$.
\end{enumerate}
If $\mathbb{A}=\nabla^{k}$, then we simply speak of a \emph{$\sobo^{k,p}$-$\sobo^{k,\infty}$-truncation}. Conversely, if $\mathbb{A}$ is a potential of the differential operator $\mathscr{A}$ having the form \eqref{eq:Aform} and $1\leq p\leq\infty$, we define $\lebe_{\mA}^{p}(\Omega):=\{u\in\lebe^{p}(\Omega;\R^{d})\colon\;\mA u=0\}$. A family of operators $(T_{\lambda})_{\lambda>0}$ with $T_{\lambda}\colon\lebe_{\mathscr{A}}^{p}(\Omega)\to\lebe_{\mA}^{\infty}(\Omega)$ is called an \emph{$\mA$-free $\lebe^{p}$-$\lebe^{\infty}$-truncation} (or simply $\mA$-free $\lebe^{\infty}$-truncation) provided there exists $c=c(\mathbb{A},p)>0$ such that the following hold for all $u\in\lebe_{\mathscr{A}}^{\infty}(\Omega)$ and $\lambda>0$:  
\begin{enumerate}
\item  $\Vert T_{\lambda}u \Vert_{\lebe^{\infty}(\Omega)} \leq c\lambda$.
\item $\Vert u- T_{\lambda}u\Vert_{\lebe^p(\Omega)} + \leq c\int_{\{\vert u \vert > \lambda \}} \vert u \vert^{p} \dif x$.
 \item $\mathscr{L}^n(\{ u \neq T_{\lambda}u\}) \leq \frac{c}{\lambda^{p}}\int_{\{\vert u \vert> \lambda \}} \vert u \vert^{p} \dif x$.
\end{enumerate}
Originally, $\sobo^{1,p}$-$\sobo^{1,\infty}$-truncations as in \textsc{Acerbi \& Fusco} \cite{AcFu88} leave $u\in\sobo^{1,p}(\Omega)$ unchanged on $\{\mathcal{M}u\leq\lambda\}\cap\{\mathcal{M}(\nabla u)\leq \lambda\}$. Here, the functions satisfy the Lipschitz estimate 
\begin{align*}
|u(x)-u(y)|\lesssim |x-y|(\mathcal{M}(\nabla u)(x)+\mathcal{M}(\nabla u)(y))\lesssim \lambda |x-y|
\end{align*}
for $\mathscr{L}^{n}$-a.e. $x,y\in\{\mathcal{M}(\nabla u)\leq\lambda\}$ and thus can be extended to a $c\lambda$-Lipschitz function $S_{\lambda}u$ by virtue of \textsc{Mc Shane}'s extension theorem \cite[Chpt.~3.1.1., Thm.~1]{EvansGariepy}. Note that, if $u$ is divergence-free, then $S_{\lambda}u$ is not in general. In view of preserving differential constraints, this necessitates a more flexible approach. Instead of appealing to the \textsc{Mc Shane} extension, one may directly perform an \textsc{Whitney}-type extension \cite{Whitney} and truncate $u\in\sobo^{1,1}(\Omega)$ on the bad set $\mathcal{O}_{\lambda} = \{ \mathcal{M} u >\lambda\} \cup \{\mathcal{M}(\nabla u) > \lambda\}$ via
\begin{align*}
\mathbf{\tilde{S}}_{\lambda}u(x) = \begin{cases} \sum_{j \in \N} \phi_j (u)_{Q_{j}}& x \in \mathcal{O}_{\lambda}, \\
                               u(x) & x \in \mathcal{O}_{\lambda}^{\complement},\end{cases}\;\;\;\text{or}\;\;\; \mathbf{S}_{\lambda}u(x) = \begin{cases} \sum_{j \in \N} \phi_j u(y_j)& x \in \mathcal{O}_{\lambda}, \\
                               u(x) & x \in \mathcal{O}_{\lambda}^{\complement}, \end{cases}
\end{align*}
where $y_{j} \in \mathcal{O}_{\lambda}^{\complement}$ are chosen suitably. Then $\mathbf{\tilde{S}}_{\lambda}$ and $\mathbf{S}_{\lambda}$ define $\sobo^{1,1}$-$\sobo^{1,\infty}$-truncations; cf.~\cite{Die13,Stein}. Setting $v = \nabla u$, this formula gives a $\curl$-free $\lebe^1$-$\lebe^{\infty}$-truncation, as $\curl(v) = 0 \Leftrightarrow v = \nabla u$ for some function $u$. Using \ref{item:P1}--\ref{item:P3}, we can, however, rewrite $\tilde{v}:= \nabla \mathbf{S}_{\lambda}u$ purely in terms of $v$, i.e.
\begin{align} \label{schweinsteiger}
    \tilde{v}(x) = \begin{cases} \sum_{i,j \in \N} \phi_i \nabla \phi_j \int_0^1 v(t y_j +(1-t) y_i) \cdot (y_i-y_j) \dif t& x \in \mathcal{O}_{\lambda}, \\
                               v(x) & x \in \mathcal{O}_{\lambda}^{\complement}. \end{cases}  
\end{align}
The key observation is that the truncation formula \eqref{schweinsteiger} does not only give a $\curl$-free $\lebe^1$-$\lebe^{\infty}$-truncation, but is stronger and gives a $\sobo^{\curl,1}$-$\sobo^{\curl,\infty}$-truncation, if we redefine the bad set to be $\tilde{\mathcal{O}}_{\lambda} := \{\mathcal{M}v > \lambda\} \cup \{\mathcal{M}\curl(v) > \lambda\}$. We are then able to formulate an $\mA$-free $\lebe^1$-$\lebe^{\infty}$-truncation of the annihilator of $\curl$, which is $\di$ in three dimensions. As discussed by the third author \cite{Schiffer21}, this approach works for all potential-annihilator pairs along the exact sequence of exterior derivatives. This is the exact sequence of differential operators starting with $\nabla$, that is 
\begin{align*}
    0 &\longrightarrow \hold^{\infty,0}(\T_n;\R) \overset{\nabla}{\longrightarrow} \hold^{\infty,0}(\T_n;\R^n) \overset{\curl }{\longrightarrow} \hold^{\infty,0}(\T_n;\R^{n \times n}_{\textup{skew}}) \longrightarrow ... \\ &\longrightarrow \hold^{\infty,0}(\T_n;\R^n) \overset{\di}{\longrightarrow} \hold^{\infty,0}(\T_n;\R) \longrightarrow 0,
\end{align*}
where $\hold^{\infty,0}(\T_n;\R^m)$ denotes the space of smooth functions on the torus with average $0$. However, $\sobo^{\mathbb{A},1}$-$\sobo^{\mathbb{A},\infty}$-truncations are also known in settings where $\mathbb{A}\neq \nabla$. In this work, we use that such a truncation exists for the symmetric gradient, i.e. $\mathbb{A}=\varepsilon= \frac{1}{2}(\nabla + \nabla^\top)$ (cf.~\cite{Ebobisse,Behn20}). We use the truncation and the exact sequence\begin{align}\label{podolski}
    0 &\longrightarrow \hold^{\infty,0}(\T_3;\R^3) \overset{\varepsilon}{\longrightarrow} \hold^{\infty,0}(\T_{3};\rdrei) \xrightarrow{\curl \curl^\top} \hold^{\infty,0}(\T_3;\rdrei) \\ &\overset{\di}{\longrightarrow} \hold^{\infty,0}(\T_3;\R^3) \longrightarrow 0 \nonumber,
\end{align}
where $\curl \curl^\top v$ for $v \in \hold^2(\R^3;\rdrei)$ is defined as \begin{align*}
   \curl \curl^\top v = \left( \begin{array}{lll} w_{2323} & w_{2331} & w_{2312} \\
                                        w_{3123} & w_{3131} & w_{3112} \\
                                        w_{1223} & w_{1231} & w_{1212}
                                        \end{array} \right), \\
                                        \quad w_{abcd} := \partial_a \partial_c v_{bd} + \partial_{b} \partial_d v_{ac} - \partial_a \partial_d v_{bc} - \partial_b \partial_c v_{ad}.
\end{align*} 
The truncation of the symmetric gradient is used to find an analogue of \eqref{schweinsteiger} for $\curl \curl^\top$, giving us a $\curl \curl^\top$-free truncation. However, this can be used to get a $\sobo^{\curl \curl^\top,1}$-$\sobo^{\curl \curl^\top,\infty}$-truncation, giving us the divergence-free $\lebe^1$-$\lebe^{\infty}$-truncation of Section \ref{sec:construction} below.
\section{Construction of the truncation and the proof of Theorem~\ref{thm:main2}} \label{sec:construction}
In this section, we establish Theorem~\ref{thm:main2}. As a main ingredient, we shall prove the following variant for smooth maps that will be shown to imply Theorem~\ref{thm:main2} in Section~\ref{sec:proofThmmain2}: 
\begin{proposition}\label{prop:smoothtrunc}
Let $w\in(\hold^{\infty}\cap\lebe^{1})(\R^{3};\mathbb{R}_{\mathrm{sym}}^{3\times 3})$ satisfy $\mathrm{div}(w)=0$. Then there exists a constant $c>0$ such that for all $\lambda>0$ there exists an open set $\mathcal{U}_{\lambda}\subset\R^{3}$ and a function $w_{\lambda}\in (\lebe^{1}\cap\lebe^{\infty})(\R^{3};\R_{\mathrm{sym}}^{3\times 3})$ with the following properties: 
\begin{enumerate}
\item\label{item:truncsmooth1} $w=w_{\lambda}$ on $\mathcal{U}_{\lambda}^{\complement}$ and $\mathscr{L}^{3}(\{w\neq w_{\lambda}\})<\frac{c}{\lambda} \int_{\{\vert w \vert > \frac{\lambda}{2}\}} \vert w \vert \dif x$.
\item\label{item:truncsmooth2} $\mathrm{div}(w_{\lambda})=0$ in $\mathscr{D}'(\R^{3};\R^{3})$.
\item\label{item:truncsmooth3} $\|w_{\lambda}\|_{\lebe^{\infty}(\R^{3})}\leq c\lambda$. 
\end{enumerate}
\end{proposition}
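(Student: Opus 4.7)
The plan is to realise the geometric Whitney scheme outlined in Section~\ref{sec:consttrunc}: localise on a bad set defined via the Hardy--Littlewood maximal operator, glue $w$ with an explicit averaged replacement on that set, and arrange the construction so that the solenoidality of $w$ is transferred to $w_\lambda$ through Gauss--Green identities on simplices. Fix a small dimensional constant $\kappa$ and set $\mathcal{U}_\lambda:=\{\mathcal{M}w > \kappa\lambda\}$, which is open by lower semicontinuity. Splitting $w=w\mathbf{1}_{\{|w|>\lambda/4\}}+w\mathbf{1}_{\{|w|\leq\lambda/4\}}$ and applying the weak-$(1,1)$ bound \eqref{eq:weak11} to the first piece (noting $|w|\leq\kappa\lambda$ pointwise on $\mathcal{U}_\lambda^\complement$ by Lebesgue differentiation and continuity) yields $\mathscr{L}^{3}(\mathcal{U}_\lambda) \lesssim \lambda^{-1}\int_{\{|w|>\lambda/2\}}|w|\dif x$, which gives \ref{item:truncsmooth1}. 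Fix a Whitney cover $(Q_j)$ of $\mathcal{U}_\lambda$ with partition of unity $(\varphi_j)$ as in Section~\ref{sec:maxop}, and for each $j$ select a good point $y_j\in\mathcal{U}_\lambda^\complement$ with $|y_j-x_j|\lesssim\ell(Q_j)$, which is possible by \ref{item:W2}; at every such $y_j$ one has $\mathcal{M}w(y_j)\leq\kappa\lambda$.

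Since $\di(w)=0$, the exact sequence \eqref{podolski} provides a symmetric matrix potential $v$ with $\curl\curl^\top v=w$ (unique modulo $\operatorname{range}\varepsilon$). Denote by $P_j$ the second-order Taylor polynomial of $v$ at $y_j$, so that $\curl\curl^\top P_j\equiv w(y_j)$ is a constant symmetric matrix, and set
\begin{align*}
v_\lambda(x):=\begin{cases}\sum_j\varphi_j(x)P_j(x), & x\in\mathcal{U}_\lambda,\\ v(x), & x\in\mathcal{U}_\lambda^\complement,\end{cases}\qquad w_\lambda:=\curl\curl^\top v_\lambda.
\end{align*}
Then $\di(w_\lambda)=0$ trivially, $w_\lambda=w$ off $\mathcal{U}_\lambda$, and on $\mathcal{U}_\lambda$ the Leibniz rule expresses $w_\lambda$ as a main term $\sum_j\varphi_j w(y_j)$ plus commutator terms in which first- or second-order derivatives of $\varphi_j$ multiply values of $v(y_j)$ and $\nabla v(y_j)$.

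Using the partition-of-unity identities $\sum_j\varphi_j\equiv 1$ and $\sum_j\nabla^a\varphi_j\equiv 0$ for $a\geq 1$ on $\mathcal{U}_\lambda$, each $v(y_j)$ and $\nabla v(y_j)$ in those commutator terms may be replaced by differences $v(y_j)-v(y_i)$, $\nabla v(y_j)-\nabla v(y_i)$; by Taylor's theorem with integral remainder, these differences equal integrals of $\varepsilon(v)$ along the segment $[y_i,y_j]$ and, via a further application of Gauss--Green, integrals of $w=\curl\curl^\top v$ over the triangle $\langle y_i,y_j,y_k\rangle$. The relation $\curl\curl^\top\circ\varepsilon=0$ from \eqref{podolski} makes the purely $\varepsilon(v)$-contributions cancel, producing a closed-form expression for $w_\lambda$ on $\mathcal{U}_\lambda$ depending only on $w$, the $y_j$ and the $\varphi_j$. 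Combining $\mathcal{M}w(y_j)\leq\kappa\lambda$ with $|\nabla^a\varphi_j|\lesssim\ell(Q_j)^{-a}$, $|y_i-y_j|\lesssim\ell(Q_j)$ and the finite overlap property \ref{item:W3} gives $|w_\lambda|\lesssim\lambda$ on $\mathcal{U}_\lambda$, and together with $|w|\leq\kappa\lambda$ on $\mathcal{U}_\lambda^\complement$ this delivers item~\ref{item:truncsmooth3}.

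The hard part is the elimination step: one must exhibit a closed-form formula for $w_\lambda$ in which every occurrence of $v$ is replaced by a simplex integral of $w$, arranged so that each term is manifestly $O(\lambda)$ and the whole expression is gauge-invariant under $v\mapsto v+\varepsilon(\eta)$ (since $v$ is only determined by $w$ modulo $\operatorname{range}\varepsilon$). The Leibniz expansion of $\curl\curl^\top(\varphi_jP_j)$ produces a combinatorially intricate sum of $\nabla^a\varphi_j\cdot\nabla^{2-a}P_j$ terms, and it is precisely the solenoidality of $w$, entering through the Gauss--Green theorem on the simplices $\langle y_i,y_j,y_k\rangle$, that generates the cancellations needed to close the $\lebe^{\infty}$ bound. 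This combinatorial/geometric bookkeeping is the substance of Section~\ref{sec:construction}.
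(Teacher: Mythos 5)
Your high-level Whitney scheme is the right geometric picture (and coincides with the heuristic the paper itself sketches in Section~\ref{sec:consttrunc}), but your proof has a genuine gap at the one place you wave away as easy: the claim that $\di(w_\lambda)=0$ \emph{trivially}. Gluing $v_\lambda=v$ on $\mathcal{U}_\lambda^\complement$ with $v_\lambda=\sum_j\varphi_jP_j$ on $\mathcal{U}_\lambda$ does not produce a function to which you may blindly apply $\curl\curl^\top$ in $\mathscr{D}'$: nothing controls the regularity of $v_\lambda$ across $\partial\mathcal{U}_\lambda$, so the distributional divergence of $w_\lambda$ could a priori charge that boundary. In the paper this is precisely what Lemma~\ref{lem:divfreelocal} (solenoidality on the open bad set), Lemma~\ref{lem:globaldivfree} (the distributional divergence is an $\lebe^1$-function) and Corollary~\ref{cor:divsymfreeglobal} accomplish, and even then one only obtains $\di(T_\lambda w)=0$ for $\mathscr{L}^{1}$-a.e.\ $\lambda$; the published proof of Proposition~\ref{prop:smoothtrunc} must therefore select a special $\widetilde\lambda\in(\lambda,2\lambda)$ with $\mathscr{L}^3(\partial\mathcal{O}_{\widetilde\lambda})=0$ and only then sets $w_\lambda:=T_{\widetilde\lambda}w$. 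Your argument, as written, asserts the conclusion for every $\lambda$ with no mechanism to handle $\partial\mathcal{U}_\lambda$.

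Two further points. First, the paper's actual construction never invokes a potential $v$ with $\curl\curl^\top v=w$: the operator $T_\lambda$ is defined directly through simplex integrals $\mathfrak{A}_{\alpha,\beta}(i,j,k)$ and $\mathfrak{B}_\alpha(i,j,k)$ of $w$ (see~\eqref{eq:ABdefine},~\eqref{def:nondiagonal},~\eqref{def:diagonal}). This sidesteps the fact that, for $w\in(\hold^\infty\cap\lebe^1)(\R^3)$, the existence of a globally defined potential on $\R^3$ is not supplied by Lemma~\ref{lem:potential} (stated for Schwartz or periodic data), and also the gauge-ambiguity you flag; your route would have to justify both, and the ``elimination'' you describe — converting all $v(y_j)$, $\nabla v(y_j)$ contributions to simplex integrals of $w$ — is exactly the content of the closed formula and of Lemma~\ref{lem:divfreelocal}, which you defer rather than supply. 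Note also that, since $v$ is $\rdrei$-valued, $\varepsilon(v)$ is not a meaningful object in the sense you use it; the Taylor differences involve $\nabla v$, and it is the operator $\curl\curl^\top$ and the Gau\ss--Green theorem (via $\di w=0$) that effects the cancellation, not the symmetric gradient. Second, your constants do not close: splitting at $\lambda/4$ yields a bound by $\lambda^{-1}\int_{\{|w|>\lambda/4\}}|w|\dif x$, which is \emph{larger} than the target $\lambda^{-1}\int_{\{|w|>\lambda/2\}}|w|\dif x$, and a ``small'' $\kappa$ conflicts with the Zhang-type estimate (Lemma~\ref{lem:setestimate} requires the low part $\mathcal{M}(w\mathbf{1}_{\{|w|\le\lambda/2\}})$ to stay strictly below the threshold, forcing $\kappa>1/2$ in your normalisation). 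These are repairable, but the $\di(w_\lambda)=0$ step is a missing idea, not a detail.
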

\subsection{Definition of $T_{\lambda}$.} Let $w\in(\hold^{\infty}\cap\lebe^{1})(\R^{3};\R_{\mathrm{sym}}^{3\times 3})$ satisfy $\mathrm{div}(w)=0$. In view of locally redefining our given map $w$ on $\mathcal{O}_{\lambda}=\{\mathcal{M}w>\lambda\}$, we put 
\begin{align}\label{eq:ABdefine}
\begin{split}
\mathfrak{A}_{\alpha,\beta}(i,j,k)(y) &:=\dashint_{\langle x_{i},x_{j},x_{k}\rangle}((y-\xi)_{\beta}w_{\alpha}(\xi)-(y-\xi)_{\alpha}w_{\beta}(\xi))\nu_{ijk}\dif^{2}\xi, \\ 
\mathfrak{B}_{\alpha}(i,j,k) & := \dashint_{\langle x_{i},x_{j},x_{k}\rangle}w_{\alpha}(\xi)\cdot\nu_{ijk}\dif^{2}\xi
\end{split}
\end{align}
provided the simplex $\langle x_{i},x_{j},x_{k}\rangle$ is non-degenerate; if it is degenerate, we then define $\mathfrak{A}_{\alpha,\beta}(i,j,k):= 0$ and $\mathfrak{B}_{\alpha}(i,j,k):=0$. Here and in what follows, we use 
\begin{align}\label{eq:outer}
\nu_{x_{i},x_{j},x_{k}}:=\nu_{ijk} := \frac{1}{2}(x_{i}-x_{j})\times(x_{k}-x_{j}),
\end{align}
provided the simplex $\langle x_{i},x_{j},x_{k}\rangle$ is non-degenerate. Consider a three-tuple
\begin{align*}
(\alpha,\beta,\gamma) \in \{(1,2,3),(2,3,1),(3,1,2)\}.
\end{align*}
For $(i,j,k)\in\mathbb{N}^{3}$ and fixed projection points $x_{l}\in Q_{l}$ for $l\in\{i,j,k\}$, we then define
\begin{align} \label{def:nondiagonal}
\begin{split}
    \widetilde{w}_{\alpha\beta}^{(k)} &= 3 \sum_{i,j\in \N}(\partial_\gamma \phi_j \partial_\alpha \phi_i \mathfrak{B}_{\alpha}(i,j,k) + \partial_\beta \phi_j \partial_\gamma \phi_i \mathfrak{B}_\beta (i,j,k)) \\ 
   & + \sum_{i,j \in \N}(\partial_{\beta \gamma} \phi_j \partial_\gamma \phi_i - \partial_{\gamma \gamma} \phi_j \partial_\beta \phi_i) \mathfrak{A}_{\beta\gamma}(i,j,k) \\
    &+\sum_{i,j\in \N}(\partial_{\alpha \gamma} \phi_j \partial_\gamma\phi_i - \partial_{\gamma \gamma} \phi_j \partial_\alpha \phi_i) \mathfrak{A}_{\gamma\alpha}(i,j,k) \\
    &+\sum_{i,j\in \N} (\partial_{\alpha \gamma} \phi_j \partial_\beta \phi_i + \partial_{\beta \gamma} \phi_j \partial_\alpha \phi_i - 2 \partial_{\alpha \beta}  \phi_j \partial_\gamma \phi_i) \mathfrak{A}_{\alpha\beta}(i,j,k).
    \end{split}
\end{align}
We define $\widetilde{w}_{\beta\alpha }^{(k)} =\widetilde{w}_{\alpha\beta }^{(k)} $ by symmetry. For the diagonal terms, we put 
\begin{align} \label{def:diagonal}
\begin{split}
    \widetilde{w}_{\alpha\alpha}^{(k)} &= 6 \sum_{i,j\in \N}\partial_\beta \phi_j \partial_\gamma \phi_i \mathfrak{B}_{\alpha}(i,j,k) \\ 
   & + 2\sum_{i,j\in\mathbb{N}}(\partial_{\gamma \gamma}\varphi_{j}\partial_{\beta}\varphi_{i}-\partial_{\beta\gamma}\varphi_{j}\partial_{\gamma}\varphi_{i})\mathfrak{A}_{\gamma\alpha}(i,j,k)\\ 
   & + 2\sum_{i,j\in\mathbb{N}}(\partial_{\beta \beta}\varphi_{j}\partial_{\gamma}\varphi_{i}-\partial_{\beta \gamma}\varphi_{j}\partial_{\beta}\varphi_{i})\mathfrak{A}_{\alpha\beta}(i,j,k). 
    \end{split}
\end{align}
Note that, since at most $\mathtt{N}$ cubes $Q_{j}$ overlap by \ref{item:W3}, each of the sums in \eqref{def:nondiagonal} and \eqref{def:diagonal} are, in a neighbourhood of each point $x \in \mathcal{O}_{\lambda}$, actually \emph{finite} sums and hence $\widetilde{w}^{(k)}:=(w_{\alpha\beta}^{(k)})_{\alpha\beta}$ is well-defined. Based on \eqref{def:nondiagonal}, we define the truncation operator $T_{\lambda}$ by 
\begin{align}\label{eq:deftruncation}
T_{\lambda}w:= w-\sum_{k}\varphi_{k}(w-\widetilde{w}^{(k)})=\begin{cases} 
w&\;\text{in}\;\mathcal{O}_{\lambda}^{\complement},\\ 
\sum_{k}\varphi_{k}\widetilde{w}^{(k)}&\;\text{in}\;\mathcal{O}_{\lambda}.
\end{cases}
\end{align}
Note that on $\mathcal{O}_{\lambda}$, $T_{\lambda}w$ is a locally finite sum of $\hold^{\infty}$-maps and thus is equally of class $\hold^{\infty}(\mathcal{O}_{\lambda};\R_{\mathrm{sym}}^{3\times 3})$.
\subsection{Auxiliary properties of $\mathfrak{A}_{\alpha,\beta}$ and $\mathfrak{B}_{\alpha}$}
In this section, we record some useful properties and auxiliary bounds on the maps $\mathfrak{A}_{\alpha, \beta}(i,j,k)$ and  the (constant) maps $\mathfrak{B}_{\alpha}(i,j,k)$ that will play an instrumental role in the proof of Proposition~\ref{prop:smoothtrunc}. We begin by gathering elementary properties of $\mathfrak{A}_{\alpha,\beta}$ and $\mathfrak{B}_{\alpha}$ to be utilised crucially when performing index permutations for the sums appearing in~\eqref{eq:deftruncation}:
	\begin{lemma}\label{lem:ABprops}
		Let $w\in\hold^{1}(\R^{3};\R_{\sym}^{3\times 3})$ satisfy $\di (w)=0$,  $i,j,k,l\in\mathbb{N}$ and define $\mathfrak{A}_{\alpha\beta},\mathfrak{B}_{\alpha}$ for $\alpha,\beta\in\{1,2,3\}$ by \eqref{eq:ABdefine}. Then the following hold:
		\begin{enumerate}
			\item\label{item:aux1} $\partial_{\alpha}\AA_{\alpha,\beta}(i,j,k)=-\mathfrak{B}_{\beta}(i,j,k)$. 
			\item\label{item:aux2} $\partial_{\beta}\AA_{\alpha,\beta}(i,j,k)=\mathfrak{B}_{\alpha}(i,j,k)$.
			\item\label{item:aux3} \emph{Antisymmetry of $\mathfrak{A}_{\alpha,\beta}$:} $\mathfrak{A}_{\alpha,\beta}(i,j,k)=-\mathfrak{A}_{\alpha,\beta}(j,i,k)=\mathfrak{A}_{\alpha,\beta}(j,k,i)$.
			\item\label{item:aux4} \emph{Antisymmetry of $\mathfrak{B}_{\alpha}$:}
			$\mathfrak{B}_{\alpha}(i,j,k)=-\mathfrak{B}_{\alpha}(j,i,k)=\mathfrak{B}_{\alpha}(j,k,i)$.
			\item\label{item:aux6A} $\di _\xi ((y-\xi)_{\beta}w_{\alpha}(\xi)-(y-\xi)_{\alpha}w_{\beta}(\xi)) =0$.
			\item\label{item:aux6}  $\mathfrak{B}_\alpha(i,j,k)-\mathfrak{B}_\alpha(l,j,k)-\mathfrak{B}_\alpha(i,l,k)-\mathfrak{B}_\alpha(i,j,l)=0.$
			\item\label{item:aux7} $\AA_{\alpha , \beta} (i,j,k)-\AA_{\alpha , \beta} (l,j,k)-\AA_{\alpha , \beta} (i,l,k)-\AA_{\alpha , \beta} (i,j,l)=0.$
		\end{enumerate}
	\end{lemma}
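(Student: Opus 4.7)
The plan is to establish the seven items in three groups: the differentiation identities \ref{item:aux1}, \ref{item:aux2}, \ref{item:aux6A} by direct calculation under the integral sign; the antisymmetry statements \ref{item:aux3}--\ref{item:aux4} by a combinatorial identity for $\nu_{ijk}$; and the ``cocycle'' relations \ref{item:aux6}--\ref{item:aux7} by the Gau\ss--Green theorem on the tetrahedron $T:=\langle x_{i},x_{j},x_{k},x_{l}\rangle$.

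For \ref{item:aux1} and \ref{item:aux2} I would differentiate under the integral sign: neither $\nu_{ijk}$ nor $w(\xi)$ depends on $y$, so only the factors $(y-\xi)_{\alpha},(y-\xi)_{\beta}$ are touched, and the claims drop out for $\alpha\neq\beta$ (the case $\alpha=\beta$ being trivial since $\mathfrak{A}_{\alpha,\alpha}\equiv 0$). For \ref{item:aux6A}, expanding $\di_{\xi}$ term by term produces two pointwise contributions $-w_{\alpha\beta}$ and $+w_{\beta\alpha}$, which cancel by symmetry of $w$, plus two volumetric contributions $(y-\xi)_{\beta}\di w_{\alpha}$ and $-(y-\xi)_{\alpha}\di w_{\beta}$, which vanish because $w$ is row-wise solenoidal.

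The antisymmetry claims \ref{item:aux3}--\ref{item:aux4} are geometric: the convex hull $\langle x_{i},x_{j},x_{k}\rangle$ depends only on the unordered set $\{x_{i},x_{j},x_{k}\}$, so only $\nu_{ijk}$ is sensitive to the ordering. Expanding the cross product yields $\nu_{ijk}=\tfrac12(x_{i}\times x_{k}+x_{j}\times x_{i}+x_{k}\times x_{j})$, which is totally antisymmetric in $(i,j,k)$; this directly gives $\nu_{jik}=-\nu_{ijk}$ and $\nu_{jki}=\nu_{ijk}$, and hence \ref{item:aux3}--\ref{item:aux4}.

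The central items \ref{item:aux6} and \ref{item:aux7} I would treat via the Gau\ss--Green theorem on $T$. Since $|\nu_{ijk}|=\mathscr{H}^{2}(\langle x_{i},x_{j},x_{k}\rangle)$ and $\nu_{ijk}$ is constant along the face, $\mathfrak{B}_{\alpha}(i,j,k)$ is precisely the flux of $w_{\alpha}$ through $\langle x_{i},x_{j},x_{k}\rangle$ oriented by $\nu_{ijk}$. A sign check on a reference tetrahedron (e.g.\ $x_{i}=0$, $x_{j}=e_{1}$, $x_{k}=e_{2}$, $x_{l}=e_{3}$) shows that the four vectors $\nu_{ijk},-\nu_{ljk},-\nu_{ilk},-\nu_{ijl}$ are the outward-oriented area vectors of the four faces of $T$; by an orientation/affine-invariance argument this remains consistent (up to a global sign) for any non-degenerate tetrahedron. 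Thus the left-hand side of \ref{item:aux6} equals $\pm\int_{\partial T}w_{\alpha}\cdot n^{\mathrm{out}}\dif^{2}\xi$, which vanishes by Gau\ss--Green and $\di w_{\alpha}=0$. Exactly the same reasoning applies to \ref{item:aux7}, with $w_{\alpha}$ replaced by $\xi\mapsto(y-\xi)_{\beta}w_{\alpha}(\xi)-(y-\xi)_{\alpha}w_{\beta}(\xi)$, whose $\xi$-divergence has just been shown to vanish in \ref{item:aux6A}.

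The main technical obstacle is the sign bookkeeping underlying \ref{item:aux6}--\ref{item:aux7}: the formula for $\nu_{ijk}$ is blind to the fourth vertex $x_{l}$, so it is not a priori clear that the orderings $(i,j,k),(l,j,k),(i,l,k),(i,j,l)$ appearing in the statement induce a globally consistent outward orientation of $\partial T$. Carrying this out requires an explicit sign computation on a reference simplex together with an orientation/affine-invariance argument; the remaining content of the lemma is routine.
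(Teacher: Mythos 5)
Your proof is correct and follows the paper's approach: differentiation under the integral sign for (a), (b), (e), antisymmetry of the cross product $\nu_{ijk}$ for (c), (d), and the Gau{\ss}--Green theorem on the tetrahedron $\langle x_{i},x_{j},x_{k},x_{l}\rangle$ for (f), (g); the reference-tetrahedron sign check together with the affine-invariance argument you supply makes explicit what the paper simply asserts. One small imprecision: the remark that (a), (b) are ``trivial'' when $\alpha=\beta$ because $\mathfrak{A}_{\alpha,\alpha}\equiv 0$ is not quite right---they would then read $0=\mp\mathfrak{B}_{\alpha}(i,j,k)$, which is false in general; the identities (a), (b) are meant, and are used in the paper, only for $\alpha\neq\beta$.
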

	\begin{proof}
		Properties~\ref{item:aux1}--\ref{item:aux4} are immediate consequences of the definitions. Property \ref{item:aux6A} holds, since
		\begin{align*}
		\di _\xi ((y-\xi)_{\beta}w_{\alpha}(\xi)-(y-\xi)_{\alpha}w_{\beta}(\xi))=-w_{\alpha \beta}(\xi)-\xi_\beta \di (w_\alpha)+w_{\beta \alpha}(\xi)+\xi _\alpha \di (w_\beta )=0.
		\end{align*}
		To prove \ref{item:aux6} we use that by the definition of $\mathfrak{B}_\alpha$ and the Gau\ss-Green theorem we have
		\begin{equation*}
		\mathfrak{B}_\alpha(i,j,k)-\mathfrak{B}_\alpha(l,j,k)-\mathfrak{B}_\alpha(i,l,k)-\mathfrak{B}_\alpha(i,j,l)=\int _{\langle x_i,x_j,x_k,x_m \rangle } \di (w_\alpha )\dif x=0.
		\end{equation*}
		Note that this calculation also holds in the case that one or multiple of the simplices are degenerate. 
		Analogously, we can prove \ref{item:aux7} by applying the Gau\ss-Green theorem as well as \ref{item:aux6A} to get
		\begin{align*}
		\AA_{\alpha , \beta} (i,j,k)-\AA_{\alpha , \beta} (l,j,k)& -\AA_{\alpha , \beta} (i,l,k)-\AA_{\alpha , \beta} (i,j,l)\\
		&=\int _{\langle x_i,x_j,x_k,x_m \rangle }\di _\xi ((y-\xi)_{\beta}w_{\alpha}(\xi)-(y-\xi)_{\alpha}w_{\beta}(\xi)) \dif x=0.
		\end{align*}
The proof is complete.
	\end{proof}
\begin{lemma}\label{lem:JogiLoew}
Let $u \in (\lebe^1 \cap \hold^1)(\R^3;\R^3)$ satisfy $\di (u)=0$ and $z_0 \in \{ \mathcal{M}_{2R} u \leq \lambda\}$, where $R>0$. Let, in addition, $x_1,x_2,x_3 \in B_R(z_{0})$. Then \begin{align} \label{eq:markusgisdol}
   \left \vert \dashint_{\langle x_{1},x_{2},x_{3}\rangle} u (\xi)\cdot\nu_{123}\dif^{2}\xi \right \vert \leq C \lambda R^2.
\end{align}
Moreover, if $w\in(\lebe^{1}\cap\hold^{1})(\R^{3};\rdrei)$ satisfies $\mathrm{div}(w)=0$ and the cubes $Q_i$, $Q_j$, $Q_k$ have non-empty intersection, $y \in Q_i \cap Q_j \cap Q_k$, we have for $\mathfrak{A}_{\alpha,\beta}$ and $\mathfrak{B}_{\alpha}$ as defined in \eqref{eq:ABdefine}
\begin{enumerate}
\item\label{item:aux7} $\vert \mathfrak{A}_{\alpha,\beta}(i,j,k)(y) \vert \leq C \lambda\ell(Q_i)^4$.
\item\label{item:aux8} $\vert \mathfrak{B}_{\alpha}(i,j,k) \vert \leq C \lambda\ell(Q_i)^3$.
\end{enumerate}
The constant $C=C(3)$ is a dimensional constant, that does not depend on $u$, $i,j,k$ and the shape of $\mathcal{O}_{\lambda}$.
\end{lemma}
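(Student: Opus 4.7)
The principal assertion is \eqref{eq:markusgisdol}; once it is in place, the bounds on $\mathfrak{A}_{\alpha,\beta}$ and $\mathfrak{B}_{\alpha}$ follow by specialising it to two concrete divergence-free vector fields. Observe first that since $|\nu_{123}|$ equals the area of the simplex $\langle x_{1},x_{2},x_{3}\rangle$, the quantity to be estimated is exactly the flux $\int u\cdot\hat\nu\,\dif^{2}\xi$, where $\hat\nu:=\nu_{123}/|\nu_{123}|$. My plan is to exploit $\di(u)=0$ via Gauss--Green in order to reduce this 2D surface integral to averages of 3D integrals of $u$ over subsets of $B_{2R}(z_{0})$, where the maximal-function bound $\mathcal{M}_{2R}u(z_{0})\leq\lambda$ can be invoked directly.

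Concretely, for each $v\in\R^{3}$ with $v\cdot\hat\nu>0$ I apply Gauss--Green on the prism $P(v):=\{\xi+tv:\xi\in\langle x_{1},x_{2},x_{3}\rangle,\,t\in[0,1]\}$; since $\di(u)=0$ this yields
\[
\int_{\langle x_{1},x_{2},x_{3}\rangle}u\cdot\hat\nu\,\dif^{2}\xi \;=\;\int_{\langle x_{1},x_{2},x_{3}\rangle+v}u\cdot\hat\nu\,\dif^{2}\xi+\Phi_{\textup{lat}}(v),
\]
where $\Phi_{\textup{lat}}(v)$ collects the fluxes through the three lateral parallelograms of $P(v)$. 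Averaging $v$ over the half-ball $S:=B_{R}(0)\cap\{v\cdot\hat\nu>0\}$, Fubini converts the first averaged term into a 3D integral of $u$ over a thickened triangle of volume $\sim R^{3}$ contained in $B_{2R}(z_{0})$, which $\mathcal{M}_{2R}u(z_{0})\leq\lambda$ bounds by $C\lambda R^{2}$. The averaged lateral term $\dashint_{S}\Phi_{\textup{lat}}(v)\,\dif v$ is the principal obstacle: a direct change of variables $(t,v)\leftrightarrow\eta$ in each lateral parallelogram produces a cone-type Jacobian factor $\rho^{-3}$ that is non-integrable over the natural parameter region. To overcome this, one must either iterate the prism construction on each lateral rectangle (each iteration extracting a fresh volume integral controlled by $\mathcal{M}_{2R}u(z_{0})$) or introduce a weighted averaging on $S$ cancelling the singular Jacobian, making essential use of the pointwise bound $|u(z_{0})|\leq\mathcal{M}_{2R}u(z_{0})\leq\lambda$ available from the $\hold^{1}$-regularity of $u$. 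An equivalent alternative is Gauss--Green on the tetrahedron $\langle z_{0},x_{1},x_{2},x_{3}\rangle$, which reduces the flux through $\langle x_{1},x_{2},x_{3}\rangle$ to three fluxes through triangles sharing the vertex $z_{0}$ and which benefit from a radial parameterisation from $z_{0}$.

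Granted \eqref{eq:markusgisdol}, items (a) and (b) are routine. For (b), note that $\mathfrak{B}_{\alpha}(i,j,k)$ is the flux of the divergence-free row $w_{\alpha}$ through $\langle x_{i},x_{j},x_{k}\rangle$; the hypothesis $y\in Q_{i}\cap Q_{j}\cap Q_{k}$ combined with Whitney comparability \ref{item:W4} places the three vertices within a ball of radius $R\sim\ell(Q_{i})$ centered at a point $z_{0}\in\mathcal{O}_{\lambda}^{\complement}$ at distance $\lesssim\ell(Q_{i})$ from the cubes, so that $\mathcal{M}_{2R}w(z_{0})\leq\lambda$ and \eqref{eq:markusgisdol} applies directly. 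For (a), the integrand $F(\xi):=(y-\xi)_{\beta}w_{\alpha}(\xi)-(y-\xi)_{\alpha}w_{\beta}(\xi)$ is divergence-free in $\xi$ by Lemma~\ref{lem:ABprops}, and the pointwise estimate $|F|\lesssim\ell(Q_{i})|w|$ on $B_{2R}(z_{0})$ gives $\mathcal{M}_{2R}F(z_{0})\lesssim\ell(Q_{i})\lambda$; applying \eqref{eq:markusgisdol} to $F$ in place of $u$ then produces the stated bound.
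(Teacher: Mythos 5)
Your reduction of \eqref{eq:markusgisdol} to items~\ref{item:aux7} and~\ref{item:aux8} is exactly the paper's, and your divergence-free vector field $F(\xi)=(y-\xi)_{\beta}w_{\alpha}(\xi)-(y-\xi)_{\alpha}w_{\beta}(\xi)$ with $|F|\lesssim\ell(Q_i)|w|$ is the right substitute for part~\ref{item:aux7}; that part of the argument is sound. The gap is in the proof of \eqref{eq:markusgisdol} itself. You correctly sense that Gauss--Green must be combined with an averaging argument, and you correctly identify the obstacle (a two-dimensional surface integral of $|u|$ cannot be controlled by $\mathcal{M}_{2R}u(z_0)$, which only controls volume averages). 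But the two ways out you offer do not close the gap. The prism version, as you acknowledge, produces lateral fluxes with a non-integrable Jacobian, and ``iterate on each lateral rectangle'' or ``introduce a weighted averaging'' are only slogans, not arguments. More seriously, your ``equivalent alternative'' of applying Gauss--Green to the fixed tetrahedron $\langle z_{0},x_{1},x_{2},x_{3}\rangle$ is \emph{not} equivalent and does not work: it replaces the target flux by three surface integrals over triangles with apex $z_0$, and these are still two-dimensional integrals, hence still invisible to $\mathcal{M}_{2R}u(z_0)$; a radial parameterisation from $z_0$ introduces a Jacobian $\sim\rho$ but still leaves you with $\int_0^R\int|u(z_0+\rho\omega)|\rho\,\dif\omega\,\dif\rho$, which no maximal-function bound at $z_0$ controls. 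The pointwise bound $|u(z_0)|\le\lambda$ controls $u$ at a single point and is of no use here.

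What the paper actually does is keep the apex $\eta$ \emph{free} rather than fixing it at $z_0$: for a variable $z$, one bounds $\int_{B_R(z_0)}\int_{\langle x_1,x_2,z\rangle}|u|\,\dif^2\xi\,\dif z$ via Fubini by a constant times $R^5\,\mathcal{M}_{2R}u(z_0)\leq c\lambda R^5$ (here the extra integration over $z$ turns the surface integral into a volume integral, which the maximal function does control), and then Markov's inequality shows that for a well-chosen threshold $\lambda'\sim\lambda R^2$ the exceptional set of $z$'s for which the flux exceeds $\lambda'$ occupies at most a quarter of $B_R(z_0)$; doing this for all three lateral triangles and taking the complement yields a good $\eta\in B_R(z_0)\setminus\mathrm{aff}(x_1,x_2,x_3)$ simultaneously controlling all three. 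This ``vary the apex, integrate, pigeonhole'' step is the missing core of the argument, and without it your proof of \eqref{eq:markusgisdol} is incomplete.
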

\begin{proof}
Let $x_1,x_2,x_3,z_0 \in\R^{3}$ be according to the assumption, $z_0 = (z_0^1,z_0^2,z_0^3)$. Then, using that $\di u =0$, we find by Gau{\ss}' theorem 
\begin{align}\label{eq:Gaussimply}
\left\vert\dashint_{\langle x_{1},x_{2},x_{3}\rangle}u\cdot \nu_{123}\dif^{2}\xi\right\vert \leq \Big(\int_{\langle \eta,x_{2},x_{3}\rangle}+\int_{\langle x_{1},\eta,x_{3}\rangle}+\int_{\langle x_{1},x_{2},\eta\rangle}\Big)|u|\dif^{2}\xi
\end{align}
We now establish the existence of some $\eta\in\R^{3}\setminus\mathrm{aff}(x_{i},x_{j},x_{k})$ such that the right-hand side of \eqref{eq:Gaussimply} is bounded by $cR^{2}\lambda$ for some $c>0$ solely depending on the underlying space dimension $n=3$.
Denote $Q_{R}(z_{0})$ the cube centered at $z_{0}$ with faces parallel to the coordinate planes and sidelength $2R$ so that $\ball_{R}(z_{0})\subset Q_{R}(z_{0})\subset\ball_{\sqrt{3}R}(z_{0})$. Then 
\begin{align}\label{eq:flatter}
\begin{split}
\int_{B_{R}(z_{0})}&\int_{\langle x_{1},x_{2},z\rangle} |u(\xi)|\dif^{2}\xi \dif z \leq \int_{Q_{R}(z_{0})}\int_{\langle x_{1},x_{2},z\rangle} |u(\xi)|\dif^{2}\xi \dif z \\ & = \int_{z_{0}^{1}-R}^{z_{0}^{1}+R}\int_{z_{0}^{2}-R}^{z_{0}^{2}+R}\int_{z_{0}^{3}-R}^{z_{0}^{3}+R}\int_{\langle x_{1},x_{2},(z^{1},z^{2},z^{3})\rangle}|u(\xi)|\dif^{2}\xi \dif z^{3}\dif z^{2}\dif z^{1} \\ 
& \leq \int_{z_{0}^{1}-R}^{z_{0}^{1}+R}\int_{z_{0}^{2}-R}^{z_{0}^{2}+R} \int_{Q_{R}(z_{0})}|u|\dif x \dif z^{2}\dif z^{1} \\ 
& \leq (\omega_{3}\sqrt{3}R)^{3}\int_{z_{0}^{1}-R}^{z_{0}^{1}+R}\int_{z_{0}^{2}-R}^{z_{0}^{2}+R} \dashint_{B_{\sqrt{3}R}(z_{0})}|u|\dif x \dif z^{2}\dif z^{1} \\ 
& \leq (2\omega_{3}R)^{3}(2R)^{2}\mathcal{M}_{2R}u(z_{0}) \\ 
& \leq c\lambda R^{5}
\end{split}
\end{align}
Here $c>0$ is a constant solely depending on the space dimension $n=3$.  In consequence, by Markov's inequality, 
\begin{align*}
\mathscr{L}^{3}(\mathcal{U}_{x_{1},x_{2},\cdot}[u,\lambda';\ball_{R}(z_{0})]) & := \mathscr{L}^{3}\Big(\Big\{z\in\ball_{R}(z_{0})\colon\;\int_{\langle x_{1},x_{2},z\rangle} |u(\xi)|\dif^{2}\xi >\lambda'  \Big\} \Big) \\ & \!\! \stackrel{\eqref{eq:flatter}}{\leq} c\frac{\lambda}{\lambda'}R^{5}\qquad \text{for any}\;\lambda'>0,
\end{align*}
where $\mathcal{U}_{x_{1},x_{2},\cdot}[w;\ball_{R}(z_{0})]$ is defined in the obvious manner. The same argument equally works for the remaining simplices that appear in \eqref{eq:Gaussimply}, and therefore, setting 
\begin{align*}
\mathcal{U}:=\mathcal{U}_{x_{1},x_{2},\cdot}[u,\lambda';\ball_{R}(z_{0})]\cup\mathcal{U}_{\cdot,x_{2},x_{3}}[u,\lambda';\ball_{R}(z_{0})]\cup \mathcal{U}_{x_{1},\cdot,x_3}[u,\lambda';\ball_{R}(z_{0})]
\end{align*}
with an obvious definition of the sets appearing on the right-hand side, we obtain 
 \begin{align*}
\mathscr{L}^{3}(\mathcal{U}) \leq \frac{4c\lambda}{\lambda'}R^{5}.
\end{align*}
We still have the freedom to choose $\lambda'>0$ and consequently put $\lambda':=\frac{16}{\omega_{3}}c\lambda R^{2}$ so that $\mathscr{L}^{3}(\mathcal{U}^{\complement})\geq \frac{3}{4}\mathscr{L}^{3}(\ball_{R}(z_{0}))$. We may thus pick $\eta\in\ball_{R}(z_{0})\setminus\mathrm{aff}(x_{i},x_{j},x_{k})$ such that $\eta\in\mathcal{U}^{\complement}$, and by definition of $\mathcal{U}$, this choice of $\eta$ gives 
\begin{align*}
\left\vert\int_{\langle x_{1},x_{2},x_{3}\rangle}u \cdot \nu_{123}\dif^{2}\xi\right\vert \leq c\lambda R^{2}
\end{align*}
with some purely dimension dependent constant $c>0$. This completes the proof of \eqref{eq:markusgisdol}.
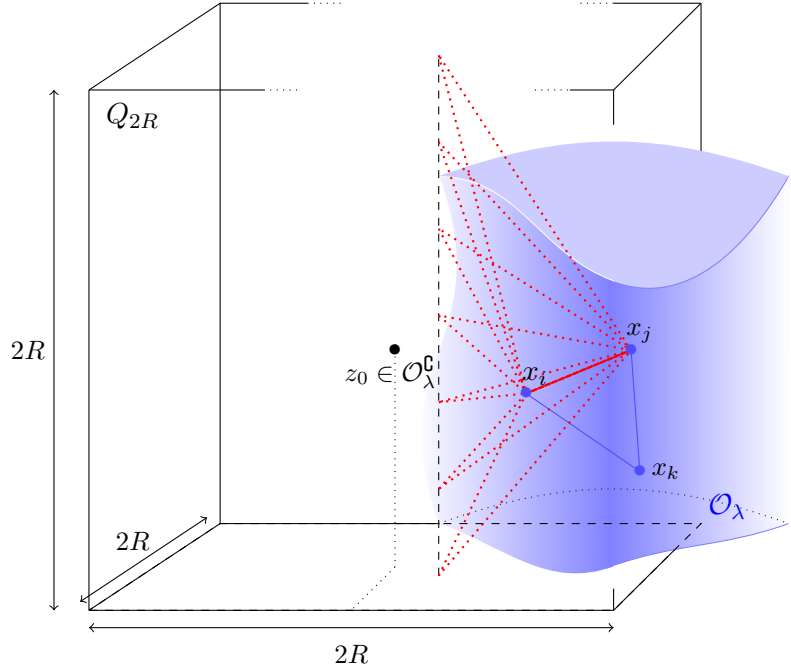
\begin{figure}
\begin{center}
\begin{tikzpicture}[scale=1.15]
\draw[black,<->] (-4.4,-1)--(-4.4,5);
\node at (-4.7,2) {$2R$};
\draw[black] (-4,-1)--(2,-1) -- (3,0) -- (-2.5,0) -- (-4,-1);
\draw[black] (3,0)--(3,6);
\draw[black] (3,6)--(2,6);
\draw[black,dotted] (2,6)--(1.6,6);
\draw[black] (-4,-1)--(-4,5);
\draw[black] (-1.5,6)--(-2.5,6);
\draw[black,dotted] (-1.5,6)--(-1.1,6);
\draw[black] (-4,5)--(-2,5);
\draw[black,dotted] (-2,5) -- (-1.6,5);
\draw[black] (-2.5,0)--(-2.5,6)--(-4,5);
\draw[black,<->] (-4,-1.2) -- (2,-1.2);
\draw[black,<->] (-4.1,-0.9) -- (-2.65,0.07);
\node at (-1,-1.5) {$2R$};
\node at (-3.5,-0.2) {$2R$};
\draw[fill=blue!20!white, blue!20!white] (0,4) to [out=20, in =160] (4,4) to (4,2) -- (0,2)--(2,2)--(0,4);
\draw[-,left color=white,right color=blue!50, shade, white] (0,0) to [bend left=20] (0,2) to [bend right=20] (0,4) to [out=0, in =160] (2,2.8) to (2,-0.5) to [out=200, in =-20] (0,0);
\draw[-,left color=blue!50,right color=white, shade, blue!50] (1.95,2.8) -- (1.95,-0.5) to [out=20, in =200] (4,0) -- (4,4) to [out=240, in =-20] (2,2.787);
\draw[black!90!white,dotted] (0,0) to [out=20, in =160] (4,0);
\draw[blue!70!white] (1,1.5) -- (2.2,2) -- (2.3,0.6) -- (1,1.5);
\node at (-0.5,2) {\textbullet};
\draw[dotted] (-0.5,2) -- (-0.5,-0.5);
\draw[dotted] (-0.5,-0.5)-- (-1,-1);
\node at (-0.55,1.75) {$z_{0}\in\mathcal{O}_{\lambda}^{\complement}$};
\draw[black,dashed] (0,-0.6) -- (0,5.4);
\draw[black,dashed] (-4,-1)--(2,-1) -- (3,0) -- (-2.5,0) -- (-4,-1);
\draw[black] (2,-1)--(2,-0.75);
\draw[black] (3,6)--(2,5)--(2,4.6);
\draw[black] (2,5)--(1.5,5);
\draw[black,dotted] (1.1,5)--(1.5,5);
\draw[dotted,thick,red] (0,5.4)--(1,1.5)--(2.2,2)--(0,5.4);
\draw[dotted,thick,red] (0,4.4)--(1,1.5)--(2.2,2)--(0,4.4);
\draw[dotted,thick,red] (0,3.4)--(1,1.5)--(2.2,2)--(0,3.4);
\draw[dotted,thick,red] (0,2.4)--(1,1.5)--(2.2,2)--(0,2.4);
\draw[dotted,thick,red] (0,1.4)--(1,1.5)--(2.2,2)--(0,1.4);
\draw[dotted,thick,red] (0,0.4)--(1,1.5)--(2.2,2)--(0,0.4);
\draw[dotted,thick,red] (0,-0.6)--(1,1.5)--(2.2,2)--(0,-0.6);
\node[black,thick] at (1.1,1.7) {\large $x_{i}$};
\node[blue!70!white] at (1,1.5) {\textbullet};
\node[blue!70!white] at (2.2,2) {\textbullet};
\node[black,thick] at (2.3,2.2) {$x_{j}$};
\node[blue!70!white] at (2.3,0.6) {\textbullet};
\node[black,thick] at (2.6,0.6) {$x_{k}$};
\draw[white] (4,4) to (4,0);
\node[blue] at (3.3,0.2) {\large $\mathcal{O}_{\lambda}$};
\node[black] at (-3.5,4.7) {\large $Q_{2R}$};
\end{tikzpicture}
\caption{The construction in the proof of Lemma~\ref{lem:JogiLoew}. The point $z_0 \in \mathcal{O}_{\lambda}^{\complement}$ is chosen such that it is close to $x_i$, $x_j$ and $x_k$ respectively. Instead of estimating the integral on the triangle with vertices $x_i$, $x_j$ and $x_k$ directly, we estimate integrals along triangles with vertices $x_i$, $x_j$ and $z \in Q_{2R}(z_0)$ (the triangles with red dashed lines) and use Gau{\ss}' theorem.}
\end{center}
\end{figure}
The estimates in \ref{item:aux7} and \ref{item:aux8} are consequences of \eqref{eq:markusgisdol}. For \ref{item:aux7} note that there is $z_0 \in \mathcal{O}_{\lambda}^{\complement}$ with $\dist(z_0, Q_i) \leq C \ell(Q_i)$, i.e. $Q_i \cap Q_j \cap Q_k \subset B_{C \ell(Q_i)}(z_0)$. Moreover, $\mathcal{M} w (z_{0})\leq \lambda$ by definition of $ \mathcal{O}_{\lambda}$ and therefore, for fixed $y \in Q_i$
\begin{align*}
    \mathcal{M}_{2R}((y-\cdot)_{\beta} w_{\alpha}(\cdot) - (y-\cdot)_{\alpha} w_{\beta})(z_{0}) \leq 2  \sup_{z \in B_R (z_{0})} \vert y - z \vert \cdot \mathcal{M} w(z_{0}). 
\end{align*}
Setting $R = C \ell(Q_i)$ and using Lemma \ref{lem:ABprops} \ref{item:aux6A} yields the estimate \ref{item:aux7}. The estimate for $\mathfrak{B}_{\alpha}$ directly uses the existence of a point $z_0 \in \mathcal{O}_{\lambda}^{\complement}$, such that $Q_i,Q_j,Q_k \subset B_{C \ell(Q_i)}(z_0)$ and that $w_{\alpha}$ is divergence-free. Applying \eqref{eq:markusgisdol} in this setting yields \ref{item:aux8}.
\end{proof}
\subsection{Elementary properties of $T_{\lambda}$}
We now record various properties of $T_{\lambda}$ that play an instrumental role in the proof of Theorem~\ref{thm:main2}. Throughout this section, we tacitly suppose that $w\in(\hold^{\infty}\cap\lebe^{1})(\R^{3};\R_{\mathrm{sym}}^{3\times 3})$, and begin with providing the corresponding $\lebe^{\infty}$-bounds: 
\begin{lemma} \label{lem:linftybound}
There exists a purely dimensional constant $c>0$ such that 
\begin{align}\label{eq:Linftybound}
\|T_{\lambda}w\|_{\lebe^{\infty}(\R^{3})}\leq c\lambda\qquad\text{holds for all}\;\lambda>0.
\end{align}
\end{lemma}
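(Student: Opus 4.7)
The plan is to bound $|T_\lambda w|$ separately on $\mathcal{O}_\lambda^\complement$ and on $\mathcal{O}_\lambda$, following the case split in the definition \eqref{eq:deftruncation} of $T_\lambda w$. On $\mathcal{O}_\lambda^\complement = \{\mathcal{M}w \leq \lambda\}$ one has $T_\lambda w = w$ outright, and since $w$ is continuous (being smooth by assumption of Proposition~\ref{prop:smoothtrunc}), the pointwise identity $w(x) = \lim_{r\to 0}\dashint_{\ball_r(x)}w\dif y$ delivers $|w(x)| \leq \mathcal{M}w(x) \leq \lambda$ at every $x \in \mathcal{O}_\lambda^\complement$, so $|T_\lambda w(x)| \leq \lambda$ on that set.

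The substantive part of the lemma concerns $\mathcal{O}_\lambda$, where $T_\lambda w(x) = \sum_k \varphi_k(x)\,\widetilde{w}^{(k)}(x)$. Because $\sum_k \varphi_k \equiv 1$ on $\mathcal{O}_\lambda$ by \ref{item:P2}, it suffices to establish $|\widetilde{w}^{(k)}(x)| \leq C\lambda$ uniformly in $k$ and $x$, provided $\varphi_k(x)\neq 0$, i.e. $x \in Q_k$. Fix such $k$ and $x$.

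The formulas \eqref{def:nondiagonal} and \eqref{def:diagonal} represent $\widetilde{w}^{(k)}(x)$ as a double sum over $(i,j) \in \N^2$ of products of first or second derivatives of $\varphi_i, \varphi_j$ against the quantities $\mathfrak{A}_{\alpha,\beta}(i,j,k)(x)$ and $\mathfrak{B}_\alpha(i,j,k)$. A summand indexed by $(i,j)$ can be non-zero at $x$ only if $x \in Q_i \cap Q_j$, whence the finite overlap property \ref{item:W3} restricts to $O(1)$ active pairs. For any such active triple $(i,j,k)$ the cubes $Q_i,Q_j,Q_k$ share the point $x$ and hence pairwise intersect; the comparability \ref{item:W4} then produces a common scale $\ell := \ell(Q_k) \sim \ell(Q_i) \sim \ell(Q_j)$, and \ref{item:W2} supplies a reference point $z_0 \in \mathcal{O}_\lambda^\complement$ with $Q_i \cup Q_j \cup Q_k \subset \ball_{C\ell}(z_0)$, at which $\mathcal{M}w(z_0) \leq \lambda$ by the very definition of $\mathcal{O}_\lambda$.

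The dimensional bookkeeping now closes. The partition-of-unity estimates \ref{item:P3} give $|\partial^s \varphi_i \partial^t \varphi_j| \lesssim \ell^{-s-t}$, whereas Lemma~\ref{lem:JogiLoew} furnishes bounds on $|\mathfrak{B}_\alpha(i,j,k)|$ and $|\mathfrak{A}_{\alpha,\beta}(i,j,k)(x)|$ with matching positive powers of $\ell$ multiplied by $\lambda$. Each summand in \eqref{def:nondiagonal} and \eqref{def:diagonal} is therefore bounded by a constant multiple of $\lambda$; summing over the $O(1)$ contributing pairs gives $|\widetilde{w}^{(k)}(x)| \leq C\lambda$, and the partition-of-unity identity $\sum_k \varphi_k(x)=1$ then yields $|T_\lambda w(x)| \leq C\lambda$ on $\mathcal{O}_\lambda$.

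The principal subtlety of this argument is not any single estimate but the calibration of the whole construction: the differential orders acting on $\varphi_i,\varphi_j$ together with the polynomial degrees built into $\mathfrak{A}_{\alpha,\beta}$ and $\mathfrak{B}_\alpha$ in \eqref{eq:ABdefine} are chosen precisely so that all powers of $\ell$ cancel, leaving an $\ell$-independent bound of order $\lambda$. This structural requirement is exactly what motivates the elaborate combinatorial form of \eqref{def:nondiagonal}--\eqref{def:diagonal}; once it is in place, the proof of Lemma~\ref{lem:linftybound} reduces to the bookkeeping just outlined.
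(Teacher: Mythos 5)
Your proof is correct and follows essentially the same route as the paper's: both reduce to the set $\mathcal{O}_\lambda$, exploit \ref{item:W3}--\ref{item:W4} and \ref{item:P3} to bound the derivative factors by negative powers of $\ell(Q_k)$, and invoke Lemma~\ref{lem:JogiLoew} so that the positive powers of $\ell$ in the $\mathfrak{A}$- and $\mathfrak{B}$-bounds cancel them, leaving a uniform multiple of $\lambda$ per active triple $(i,j,k)$. The only cosmetic difference is that you first show $|\widetilde{w}^{(k)}(x)|\leq C\lambda$ and then use the convex combination $\sum_k\varphi_k=1$, whereas the paper keeps the factor $\varphi_k$ inside each summand and sums over triples using finite overlap directly; these are the same argument packaged differently.
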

\begin{proof}
Since $|w|\leq \lambda$ on $\mathcal{O}_{\lambda}^{\complement}$, it suffices to prove $\|T_{\lambda}w\|_{\lebe^{\infty}(\mathcal{O}_{\lambda})}\leq c\lambda$ for some suitable $c>0$. Hence let $x\in\mathcal{O}_{\lambda}$. Then, by \ref{item:W1} and \ref{item:W3}, $x\in Q_{k}$ for some $k\in\mathbb{N}$, and there are only finitely many cubes $Q_{i},Q_{j}$ such that $Q_{i}\cap Q_{j}\cap Q_{k}\neq\emptyset$; note that the number of such cubes solely depends on the underlying space dimension $n=3$. For any choice of $\alpha',\beta',\gamma'\in\{1,2,3\}$ and $\ell_{1}+\ell_{2}=2$ we have 
\begin{align}\label{eq:hansiflick0}
|\varphi_{k}\partial_{\beta'}^{\ell_{1}}\varphi_{i}\partial_{\gamma'}^{\ell_{2}}\varphi_{j}|\leq c\frac{\mathbbm{1}_{Q_{i}\cap Q_{j}\cap Q_{k}}}{\ell(Q_{k})^{2}}
\end{align}
and similarly, if $\ell_{1}+\ell_{2}=3$, 
\begin{align}\label{eq:hansiflick1}
|\varphi_{k}\partial_{\beta'}^{\ell_{1}}\varphi_{i}\partial_{\gamma'}^{\ell_{2}}\varphi_{j}|\leq c\frac{\mathbbm{1}_{Q_{i}\cap Q_{j}\cap Q_{k}}}{\ell(Q_{k})^{3}}, 
\end{align}
which is seen by combining \ref{item:W4} and \ref{item:P3}. Again, $c>0$ is a purely dimensional constant. By definition of $\widetilde{w}^{(k)}$, cf.  \eqref{def:nondiagonal} and \eqref{def:diagonal}, every summand containg some $\mathfrak{B}_{\delta}(i,j,k)$, $\delta\in\{\alpha,\beta,\gamma\}$, is of the form $\varphi_{k}\partial_{\beta'}^{\ell_{1}}\varphi_{i}\partial_{\gamma'}^{\ell_{2}}\varphi_{j}\mathfrak{B}_{\delta}(i,j,k)$ with $\ell_{1}+\ell_{2}=2$. Here we may invoke Lemma~\ref{lem:JogiLoew}~\ref{item:aux7} in conjunction with~\eqref{eq:hansiflick0} to find 
\begin{align*}
|\varphi_{k}\partial_{\beta'}^{\ell_{1}}\varphi_{i}\partial_{\gamma'}^{\ell_{2}}\varphi_{j}\mathfrak{B}_{\delta}(i,j,k)|\leq c\lambda. 
\end{align*}
Conversely, every summand in the definition of $\widetilde{w}^{(k)}$ that contains some $\AA_{\delta,\kappa}(i,j,k)$, $\delta,\kappa\in\{\alpha,\beta,\gamma\}$, is of the form $
\varphi_{k}\partial_{\beta'}^{\ell_{1}}\varphi_{i}\partial_{\gamma'}^{\ell_{2}}\varphi_{j}\mathfrak{A}_{\delta,\kappa}(i,j,k)$ with $\ell_{1}+\ell_{2}=3$, and in this case Lemma~\ref{lem:JogiLoew}~\ref{item:aux8} in conjunction with \eqref{eq:hansiflick1} yields 
\begin{align*}
|\varphi_{k}\partial_{\beta'}^{\ell_{1}}\varphi_{i}\partial_{\gamma'}^{\ell_{2}}\varphi_{j}\mathfrak{A}_{\delta,\kappa}(i,j,k)|\leq c\lambda. 
\end{align*}
By the uniformly finite overlap of the cubes, this completes the proof. 
\end{proof}
\begin{lemma}\label{lem:divfreelocal}
For every $\alpha\in\{1,2,3\}$, $T_{\lambda}(w_{\alpha1},w_{\alpha2},w_{\alpha3})$ \emph{is solenoidal} on $\mathcal{O}_{\lambda}$.
\end{lemma}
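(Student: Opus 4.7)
The plan is to compute the divergence row-by-row directly on $\mathcal{O}_{\lambda}$, where by \eqref{eq:deftruncation} we have $(T_{\lambda}w)_{\alpha\beta}=\sum_{k}\varphi_{k}\widetilde{w}^{(k)}_{\alpha\beta}$. Fix $\alpha$, and let $(\alpha,\beta,\gamma)$ be the corresponding cyclic permutation of $(1,2,3)$. The goal is to show
\begin{align*}
\partial_{\alpha}(T_{\lambda}w)_{\alpha\alpha}+\partial_{\beta}(T_{\lambda}w)_{\alpha\beta}+\partial_{\gamma}(T_{\lambda}w)_{\alpha\gamma}=0\qquad\text{on }\mathcal{O}_{\lambda},
\end{align*}
by expanding everything via the product rule and then collapsing the resulting sums using the properties collected in Lemma~\ref{lem:ABprops} together with standard partition-of-unity identities.

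The first step is to differentiate $\widetilde{w}^{(k)}_{\alpha\alpha}$, $\widetilde{w}^{(k)}_{\alpha\beta}$, $\widetilde{w}^{(k)}_{\alpha\gamma}$ according to \eqref{def:nondiagonal}--\eqref{def:diagonal}. Each summand is a product of a derivative of $\varphi_{k}$ (coming from the product rule) with two derivatives of $\varphi_{i},\varphi_{j}$ times either a constant $\mathfrak{B}_{\delta}(i,j,k)$ or a function $\mathfrak{A}_{\delta,\kappa}(i,j,k)(y)$; derivatives in $y$ hit only the $\mathfrak{A}$-factors, and by Lemma~\ref{lem:ABprops}~\ref{item:aux1}--\ref{item:aux2} they reduce to $\mathfrak{B}$-terms. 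After this reduction the whole row-divergence is a (locally finite) triple sum over $i,j,k$ whose summands are products of three derivatives of the partition of unity times some $\mathfrak{B}_{\delta}(i,j,k)$. Next, one sums out one index using $\sum_{m\in\mathbb{N}}\partial^{\mu}\varphi_{m}=0$ on $\mathcal{O}_{\lambda}$ for any multi-index $|\mu|\geq 1$, which is inherited from $\sum_{m}\varphi_{m}\equiv 1$ on $\mathcal{O}_{\lambda}$. This transforms the remaining double sum into one that is susceptible to the cocycle identity \ref{item:aux6},
\begin{align*}
\mathfrak{B}_{\alpha}(i,j,k)-\mathfrak{B}_{\alpha}(l,j,k)-\mathfrak{B}_{\alpha}(i,l,k)-\mathfrak{B}_{\alpha}(i,j,l)=0,
\end{align*}
which is precisely where the divergence-freeness of $w$ (via Gau\ss--Green on $\langle x_{i},x_{j},x_{k},x_{l}\rangle$) enters the argument. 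The corresponding cancellation across the cyclic indices then yields zero.

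The principal obstacle is purely combinatorial: with nine distinct summands across $\widetilde{w}^{(k)}_{\alpha\alpha}$, $\widetilde{w}^{(k)}_{\alpha\beta}$, $\widetilde{w}^{(k)}_{\alpha\gamma}$, applying $\partial_{\alpha}+\partial_{\beta}+\partial_{\gamma}$ and the product rule produces a large number of terms with different derivative patterns in $\varphi_{i},\varphi_{j},\varphi_{k}$. Organising them so that the index antisymmetries \ref{item:aux3}--\ref{item:aux4} of $\mathfrak{A}$ and $\mathfrak{B}$ (together with the freedom to relabel summation indices) can be exploited is the delicate part; in practice I would sort the terms by which pair of indices the two derivatives of $\varphi_{i},\varphi_{j}$ carry, then resum one index using the partition-of-unity identity, and finally invoke \ref{item:aux6} or \ref{item:aux7}. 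The precise matching of the numerical coefficients appearing in \eqref{def:nondiagonal}--\eqref{def:diagonal} (the factor $3$ in front of the $\mathfrak{B}$-part of the off-diagonal entries, the factor $6$ in front of the diagonal ones, and the factor $2$ in front of the diagonal $\mathfrak{A}$-terms) has been tailored exactly for these cancellations to close, so the bookkeeping, though lengthy, is expected to go through without further structural input.
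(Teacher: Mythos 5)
Your overall strategy coincides with the paper's appendix argument: expand the row-divergence via the product rule, reduce the $y$-derivatives of $\mathfrak{A}$ to $\mathfrak{B}$-terms using Lemma~\ref{lem:ABprops}~\ref{item:aux1}--\ref{item:aux2}, and close via the partition-of-unity identity and the cocycle relations \ref{item:aux6}--\ref{item:aux7}. However, the plan as written contains a structural error. Your intermediate claim that ``after this reduction the whole row-divergence is a (locally finite) triple sum over $i,j,k$ whose summands are products of three derivatives of the partition of unity times some $\mathfrak{B}_{\delta}(i,j,k)$'' is false: only the summands where the outer derivative hits an $\mathfrak{A}$-factor convert to $\mathfrak{B}$-terms. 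Whenever the product-rule derivative falls on one of the $\varphi$-factors instead, the original $\mathfrak{A}_{\delta,\kappa}(i,j,k)$ survives unchanged, so the resulting expression is a \emph{mixed} sum; the paper's proof writes it explicitly as $\sum_{ijk}\bigl(f^{(1)}_{ijk}\mathfrak{B}_{1}+f^{(2)}_{ijk}\mathfrak{B}_{2}+f^{(3)}_{ijk}\mathfrak{B}_{3}+f^{(1,2)}_{ijk}\mathfrak{A}_{1,2}+f^{(2,3)}_{ijk}\mathfrak{A}_{2,3}+f^{(3,1)}_{ijk}\mathfrak{A}_{3,1}\bigr)$ and then kills each of the six families separately. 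Your outline never explains how the three residual $\mathfrak{A}$-families are disposed of, and the parenthetical mention of \ref{item:aux7} at the end is inconsistent with the claim that nothing carrying an $\mathfrak{A}$-factor remains.

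The second issue is that the closing mechanism you describe---``resum one index using $\sum_{m}\partial^{\mu}\varphi_{m}=0$, then invoke the cocycle identity''---is not sufficient. Lemma~\ref{lem:summationABvanish} (which packages \ref{item:aux6}--\ref{item:aux7} with the partition-of-unity identity) only annihilates summands $\partial_{a}\varphi_{k}\,\partial_{b}\varphi_{j}\,\partial_{c}\varphi_{i}\,\mathfrak{B}_{\alpha}(i,j,k)$ with $|a|,|b|,|c|\geq 1$, i.e.\ where all three partition functions carry at least one derivative. Many of the summands arising from the product rule have an undifferentiated $\varphi_{k}$, and those are not reached by the cocycle argument. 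What actually eliminates them in the paper is pairwise cancellation after relabelling summation indices ($i\leftrightarrow j$, or cyclically $i\to j\to k\to i$) against the antisymmetries $\mathfrak{B}_{\alpha}(i,j,k)=-\mathfrak{B}_{\alpha}(j,i,k)$ and $\mathfrak{A}_{\alpha,\beta}(i,j,k)=-\mathfrak{A}_{\alpha,\beta}(j,i,k)$; only after those cancellations collapse each family down to a term of the Lemma~\ref{lem:summationABvanish} form does the cocycle identity finish the proof. You do name \ref{item:aux3}--\ref{item:aux4} as ``the delicate part'', but the plan presents them as a bookkeeping aid while presenting the cocycle as the engine; in fact the opposite is true for a majority of the terms, and following your steps as stated would leave the $\varphi_{k}$-undifferentiated contributions unaccounted for.
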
 
The proof of this lemma relies on a slightly elaborate computation, mutually hinging on index permutations and the properties of the maps $\mathfrak{A}_{\alpha,\beta}$ and $\mathfrak{B}_{\alpha}$ as gathered in Lemma~\ref{lem:ABprops}. For expository purposes, we thus accept Lemma~\ref{lem:divfreelocal} for the time being and refer the reader to the Appendix, Section~\ref{sec:divfreelocal}, for its proof.

\subsection{Global divsym-freeness} 
As the last ingredient towards Proposition~\ref{prop:smoothtrunc}, we next address the regularity of $\mathrm{div}(T_{\lambda}w)$: 
\begin{lemma}\label{lem:globaldivfree}
Let $w\in(\hold^{\infty}\cap\lebe^{1})(\R^{3};\R_{\mathrm{sym}}^{3\times 3})$ satisfy $\mathrm{div}(w)=0$ and define $T_{\lambda}w$ for $\lambda>0$ by \eqref{eq:deftruncation}. Then the distributional divergence of $T_{\lambda}$ is an $\R^{3}$-valued regular distribution, that is, $\mathrm{div}(T_{\lambda}w)\in\lebe^{1}(\R^{3};\R^{3})$. 
\end{lemma}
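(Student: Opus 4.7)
The plan is to establish the stronger assertion $\di(T_{\lambda}w)=0$ in $\mathscr{D}'(\R^{3};\R^{3})$, from which $\lebe^{1}$-membership is immediate. Writing $T_{\lambda}w=w-f$ with $f:=\sum_{k}\varphi_{k}(w-\widetilde{w}^{(k)})\in(\lebe^{\infty}\cap\lebe^{1})(\R^{3};\rdrei)$ supported in $\overline{\mathcal{O}_{\lambda}}$ (via Lemma~\ref{lem:linftybound} and $\mathscr{L}^{3}(\mathcal{O}_{\lambda})\leq c\|w\|_{\lebe^{1}}/\lambda$), the identity $\di(w)=0$ reduces the task to $\di(f)=0$ distributionally. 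On the two open sets $\mathcal{O}_{\lambda}$ and $\operatorname{int}(\mathcal{O}_{\lambda}^{\complement})$, $f$ is smooth with classical divergence zero --- on $\mathcal{O}_{\lambda}$ by Lemma~\ref{lem:divfreelocal} together with $\di(w)=0$, elsewhere trivially --- so $\di(f)$ is supported in $\partial\mathcal{O}_{\lambda}$ and only a potential singular boundary contribution needs ruling out.

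For $\phi\in\hold_{c}^{\infty}(\R^{3})$, dominated convergence on the partial sums of $f$ (with $\lebe^{1}(\R^{3})$-majorant $(|w|+C\lambda)\mathbbm{1}_{\mathcal{O}_{\lambda}}$) combined with termwise integration by parts on each compactly supported smooth summand gives
\[
\langle\di(f),\phi\rangle=\sum_{k}\int_{Q_{k}}g_{k}\phi\,\dif x,\qquad g_{k}:=\di(\varphi_{k}(w-\widetilde{w}^{(k)}))\in\hold_{c}^{\infty}(Q_{k}).
\]
The decisive cancellation is $\int_{Q_{k}}g_{k}\,\dif x=0$, which is the divergence theorem applied to $\varphi_{k}(w-\widetilde{w}^{(k)})$. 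Denoting by $x_{k}$ the centre of $Q_{k}$ and Taylor-expanding $\phi$ around $x_{k}$,
\[
\Big|\int g_{k}\phi\,\dif x\Big|=\Big|\int g_{k}(\phi-\phi(x_{k}))\,\dif x\Big|\leq\|\nabla\phi\|_{\lebe^{\infty}}\ell(Q_{k})\|g_{k}\|_{\lebe^{1}(Q_{k})}\lesssim\lambda\|\nabla\phi\|_{\lebe^{\infty}}\ell(Q_{k})^{3},
\]
where $\|g_{k}\|_{\lebe^{1}(Q_{k})}\lesssim\lambda\ell(Q_{k})^{2}$ follows from $|\nabla\varphi_{k}|\lesssim\ell(Q_{k})^{-1}$, the Whitney-average bound $\int_{Q_{k}}|w|\lesssim\lambda\ell(Q_{k})^{3}$ (via a nearby $z\in\mathcal{O}_{\lambda}^{\complement}$ with $\mathcal{M}w(z)\leq\lambda$), the pointwise bound $\|\widetilde{w}^{(k)}\|_{\lebe^{\infty}(Q_{k})}\lesssim\lambda$ from Lemma~\ref{lem:linftybound}, and $\|\di\widetilde{w}^{(k)}\|_{\lebe^{\infty}(Q_{k})}\lesssim\lambda/\ell(Q_{k})$ obtained by differentiating \eqref{def:nondiagonal}--\eqref{def:diagonal}.

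Summing over the essentially disjoint Whitney cubes yields the absolute convergence $\sum_{k}|\int g_{k}\phi|\lesssim\lambda\|\nabla\phi\|_{\lebe^{\infty}}\mathscr{L}^{3}(\mathcal{O}_{\lambda})<\infty$. Combined with the pointwise identification $G:=\sum_{k}g_{k}\equiv 0$ --- outside $\mathcal{O}_{\lambda}$ by support, on $\mathcal{O}_{\lambda}$ via $G=\di(f)=\di(w)-\di(T_{\lambda}w)=0$ from Lemma~\ref{lem:divfreelocal} --- and the finite-overlap pointwise domination $|\sum_{k}g_{k}(\phi-\phi(x_{k}))|\lesssim\mathtt{N}\lambda\|\nabla\phi\|_{\lebe^{\infty}}\mathbbm{1}_{\mathcal{O}_{\lambda}}\in\lebe^{1}(\R^{3})$ from \ref{item:W3}, Fubini permits the interchange $\langle\di(f),\phi\rangle=\int_{\R^{3}}\sum_{k}g_{k}(\phi-\phi(x_{k}))\,\dif x$; the pointwise decomposition $\sum_{k}g_{k}(\phi-\phi(x_{k}))=\phi G-\sum_{k}g_{k}\phi(x_{k})$ together with $\phi G\equiv 0$ and the partial-sum vanishing $\int\sum_{k\leq N}g_{k}\phi(x_{k})\,\dif x=\sum_{k\leq N}\phi(x_{k})\int g_{k}=0$ for every $N$ then forces $\langle\di(f),\phi\rangle=0$ for all $\phi$, ruling out any singular contribution on $\partial\mathcal{O}_{\lambda}$. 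The main obstacle is the Taylor-cancellation step: the naive bound $\|g_{k}\|_{\lebe^{1}}\lesssim\lambda\ell(Q_{k})^{2}$ alone yields the potentially divergent series $\sum_{k}\ell(Q_{k})^{2}$, and only exploiting $\int_{Q_{k}}g_{k}=0$ produces the extra factor $\ell(Q_{k})$ that reduces matters to the convergent geometric sum $\sum_{k}\ell(Q_{k})^{3}\leq\mathscr{L}^{3}(\mathcal{O}_{\lambda})$.
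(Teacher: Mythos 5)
Your proposal sets out to prove the \emph{stronger} assertion $\di(T_{\lambda}w)=0$ in $\mathscr{D}'(\R^{3};\R^{3})$ for every $\lambda>0$, whereas the lemma only claims $\di(T_{\lambda}w)\in\lebe^{1}$; indeed, the paper deliberately separates these: Lemma~\ref{lem:globaldivfree} gives the $\lebe^{1}$-regularity, and only Corollary~\ref{cor:divsymfreeglobal} upgrades this to $\di(T_{\lambda}w)=0$, and there only for $\mathscr{L}^{1}$-a.e.\ $\lambda$, precisely because the argument hinges on $\mathscr{L}^{3}(\partial\mathcal{O}_{\lambda})=0$. So an argument that yields $\di(T_{\lambda}w)=0$ unconditionally for every $\lambda$ should already be treated with suspicion.

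The concrete gap is in the final exchange. You correctly observe that $\int_{Q_k}g_k=0$ and that the Taylor trick gives $\lvert\int g_k\phi\rvert\lesssim\lambda\|\nabla\phi\|_{\infty}\ell(Q_k)^3$, hence $\sum_k\int g_k\phi$ converges absolutely, and the finite-overlap bound dominates the partial sums of $\sum_k g_k(\phi-\phi(x_k))$ uniformly by an $\lebe^{1}$ function (incidentally, the majorant should be $\mathtt{N}\|\nabla\phi\|_{\infty}(\lvert w\rvert+\lambda)\mathbbm{1}_{\mathcal{O}_{\lambda}}$, not $\mathtt{N}\lambda\|\nabla\phi\|_{\infty}\mathbbm{1}_{\mathcal{O}_{\lambda}}$, since $g_k$ involves $\nabla\varphi_k\cdot w$ and $\lvert w\rvert$ is not pointwise bounded by $\lambda$ on $Q_k$; this is fixable). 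The problem comes when you pass from $\int\sum_{k\le N}g_k\phi(x_k)\dif x=0$ for every $N$ to $\int\sum_k g_k\phi(x_k)\dif x=0$. The partial sums $\sum_{k\le N}g_k(x)\phi(x_k)$ are \emph{not} uniformly dominated by an $\lebe^{1}$ function: the finite-overlap bound only gives $\lvert\sum_{k\le N}g_k(x)\phi(x_k)\rvert\lesssim\mathtt{N}\|\phi\|_{\infty}\max_{k\colon x\in Q_k}\lvert g_k(x)\rvert$, and since $\lvert g_k\rvert\sim(\lvert w\rvert+\lambda)/\ell(Q_k)$ and $\ell(Q_k)\to0$ near $\partial\mathcal{O}_\lambda$, the envelope $\max_k\lvert g_k(x)\rvert$ is not integrable — its integral is comparable to $\sum_k\ell(Q_k)^{2}$, which is exactly the divergent series you flagged as the ``main obstacle.'' The Taylor cancellation that produced the extra factor $\ell(Q_k)$ is a property of $g_k(\phi-\phi(x_k))$, and it is lost once you split off $\phi(x_k)\,g_k$: the identity $\sum_k g_k\phi(x_k)=-\sum_k g_k(\phi-\phi(x_k))$ shows the \emph{full} sum is dominated, but says nothing about the truncations $\sum_{k\le N}g_k\phi(x_k)$, which is what dominated convergence requires. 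So the claim ``then forces $\langle\di(f),\phi\rangle=0$'' is unjustified. Note also that even if one stops at $\langle\di(f),\phi\rangle=\int_{\R^{3}}\sum_k g_k(\phi-\phi(x_k))\dif x$, the only estimate obtained is $\lvert\langle\di(f),\phi\rangle\rvert\lesssim\|\nabla\phi\|_{\infty}$, not $\lesssim\|\phi\|_{\infty}$; this does not identify $\di(T_{\lambda}w)$ as a (signed) measure, let alone an $\lebe^{1}$ function, so the lemma itself is not reached. The paper's proof is structured differently: it integrates by parts term by term in the explicit expansion, carefully produces $\sobo_{0}^{1,1}(\mathcal{O}_{\lambda})$ corrector functions $v_{\mathrm{I}},v_{\mathrm{II}},v_{\mathrm{III}}$, and ends up with the concrete identity $\int_{\R^{3}}\widetilde{w}_{1}\cdot\nabla\psi\dif x=\int_{\mathcal{O}_{\lambda}}h\,\psi\dif x$ for a fixed $h\in\lebe^{1}(\mathcal{O}_{\lambda})$ — precisely the $\|\psi\|_{\infty}$-bound your argument lacks.
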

\begin{proof}
As in the previous proof, we focus on the first column of $w$; the other columns are treated by analogous means. Let $\psi\in\hold_{c}^{\infty}(\R^{3})$. By a technical, yet elementary computation to be explained in detail in the Appendix (cf.~Section~\ref{sec:App1}), we have 
\begin{align}\label{eq:Bwrite}
\begin{split}
\int_{\mathcal{O}_{\lambda}}\widetilde{w}_{1}\cdot\nabla\psi\dif x & = 2\sum_{i,j,k}\int_{\mathcal{O}_{\lambda}}\varphi_{k}(\partial_{2}\varphi_{j})(\partial_{3}\varphi_{i})\mathfrak{B}_{1}(i,j,k)\partial_{1}\psi\dif x \\ 
& + 2\sum_{i,j,k}\int_{\mathcal{O}_{\lambda}}\varphi_{k}(\partial_{3}\varphi_{j})(\partial_{1}\varphi_{i})\mathfrak{B}_{1}(i,j,k)\partial_{2}\psi\dif x \\ 
& + 2\sum_{i,j,k}\int_{\mathcal{O}_{\lambda}}\varphi_{k}(\partial_{1}\varphi_{j})(\partial_{2}\varphi_{i})\mathfrak{B}_{1}(i,j,k)\partial_{3}\psi\dif x \\ 
& =: \mathrm{I}+\mathrm{II}+\mathrm{III}. 
\end{split}
\end{align}
We focus on term $\mathrm{I}$ first and consider the functions
\begin{align}\label{eq:v1def}
\begin{split}
v_{\mathrm{I}}(y)&:=\sum_{i,j,k}v_{\mathrm{I}}^{ijk}(y):=\sum_{i,j,k}\varphi_{k}(\partial_{2}\varphi_{j})(\partial_{3}\varphi_{i})(\mathfrak{B}_{1}(i,j,k)-w_{1}(y)\cdot\nu_{ijk}), \\ 
w_{\mathrm{I}}(y)&:=\sum_{i,j,k}w_{\mathrm{I}}^{ijk}(y):=\sum_{i,j,k}\varphi_{k}(\partial_{2}\varphi_{j})(\partial_{3}\varphi_{i})(w_{1}(y)\cdot\nu_{ijk}).
\end{split}
\end{align}
We claim that $v_{\mathrm{I}}\in\sobo_{0}^{1,1}(\mathcal{O}_{\lambda})$. Note that each summand belongs to $\hold_{c}^{\infty}(\mathcal{O}_{\lambda})$, and so it suffices to establish that the overall sum in \eqref{eq:v1def} converges absolutely in $\sobo^{1,1}(\mathcal{O}_{\lambda})$. We give bounds on the single summands: For $i,j,k\in\mathbb{N}$, note that whenever $y\in Q_{i}\cap Q_{j}\cap Q_{k}$, then 
\begin{align}\label{eq:paravent}
\begin{split}
|\mathfrak{B}_{1}(i,j,k)-w_{1}(y)\cdot\nu_{ijk}| & \leq \dashint_{\langle x_{i},x_{j},x_{k}\rangle}|w_{1}(\xi)-w_{1}(y)|\,|\nu_{ijk}|\dif^{2}\xi \\ 
& \leq c\|\nabla w_{1}\|_{\lebe^{\infty}(\R^{3})}\ell(Q_{k})^{3}
\end{split}
\end{align}
as a consequence of the usual Lipschitz estimate and $\mathrm{dist}(y,\langle x_{i},x_{j},x_{k}\rangle)\leq c\ell(Q_{k})$. Now, by \ref{item:W4} and \ref{item:P3}, we consequently obtain by \eqref{eq:paravent}
\begin{align*}
&\|v_{\mathrm{I}}^{ijk}\|_{\lebe^{1}(Q_{k})} \leq c\ell(Q_{k})^{4}\|\nabla w_{1}\|_{\lebe^{\infty}(\R^{3})},\\
&\|\nabla v_{\mathrm{I}}^{ijk}\|_{\lebe^{1}(Q_{k})}  \leq c\ell(Q_{k})^{3}\|\nabla w_{1}\|_{\lebe^{\infty}(\R^{3})}, 
\end{align*}
so that, by the uniformly finite overlap of the cubes, 
\begin{align*}
\sum_{i,j,k}\|v_{\mathrm{I}}^{ijk}\|_{\sobo^{1,1}(\mathcal{O}_{\lambda})} & \leq c \sum_{k}(\ell(Q_{k})^{4}+\ell(Q_{k})^{3})\|\nabla w_{1}\|_{\lebe^{\infty}(\R^{3})} \\ 
& \leq c(1+\mathscr{L}^{3}(\mathcal{O}_{\lambda})^{\frac{1}{3}})\sum_{k}\ell(Q_{k})^{3}\|\nabla w_{1}\|_{\lebe^{\infty}(\R^{3})}\\ 
& \leq c(1+\mathscr{L}^{3}(\mathcal{O}_{\lambda})^{\frac{1}{3}})\mathscr{L}^{3}(\mathcal{O}_{\lambda})\|\nabla w_{1}\|_{\lebe^{\infty}(\R^{3})}
\end{align*}
Hence, $v_{\mathrm{I}}\in\sobo_{0}^{1,1}(\mathcal{O}_{\lambda})$. Extend $v_{\mathrm{I}}$ by zero to the entire $\R^{3}$ to obtain $\widetilde{v}_{\mathrm{I}}\in\sobo_{0}^{1,1}(\R^{3})$. Then an integration by parts yields 
\begin{align*}
\mathrm{I} & = 2\int_{\mathcal{O}_{\lambda}}v_{\mathrm{I}}\partial_{1}\psi\dif y + 2\int_{\mathcal{O}_{\lambda}}w_{\mathrm{I}}\partial_{1}\psi\dif y \\ 
& = 2\int_{\R^{3}}\widetilde{v}_{\mathrm{I}}\partial_{1}\psi\dif y + 2\int_{\mathcal{O}_{\lambda}}w_{\mathrm{I}}\partial_{1}\psi\dif y \\ 
& \!\!\!\!\!\!\!\!\!\!\!\stackrel{\widetilde{v}_{\mathrm{I}}\in\sobo_{0}^{1,1}(\R^{3})}{=} -2\int_{\R^{3}}(\partial_{1}\widetilde{v}_{\mathrm{I}})\psi\dif y + 2\int_{\mathcal{O}_{\lambda}}w_{\mathrm{I}}\partial_{1}\psi\dif y =: \mathrm{I}_{1}+\mathrm{I}_{2}, 
\end{align*}
and $\partial_{1}\widetilde{v}_{\mathrm{I}}\in\lebe^{1}(\R^{3})$. Towards term $\mathrm{I}_{2}$, observe that for all $y\in\R^{3}$, 
\begin{align}\label{eq:crossId}
\begin{split}
-2\nu_{ijk} &= -(x_{i}-x_{j})\times (x_{k}-x_{j}) \\ & = (y-x_{j})\times (x_{j}-x_{k}) + (x_{i}-y)\times (y-x_{k}) +(x_{i}-x_{j})\times(x_{j}-y), 
\end{split}
\end{align}
which follows by direct computation using that $(x_{j}-y)\times(y-x_{j})=0$. Working from the definition of $w_{\mathrm{I}}$ as in~\eqref{eq:v1def}, we consequently find by \eqref{eq:crossId}
\begin{align*}
\mathrm{I}_{2}=\int_{\mathcal{O}_{\lambda}}w_{\mathrm{I}}(y)\partial_{1}\psi\dif y & = \int_{\mathcal{O}_{\lambda}}\sum_{i,j,k}\varphi_{k}(\partial_{2}\varphi_{j})(\partial_{3}\varphi_{i})(w_{1}(y)\cdot\nu_{yx_{j}x_{k}})\partial_{1}\psi\dif y&(=0)\\ 
& + \int_{\mathcal{O}_{\lambda}}\sum_{i,j,k}\varphi_{k}(\partial_{2}\varphi_{j})(\partial_{3}\varphi_{i})(w_{1}(y)\cdot\nu_{x_{i}yx_{k}})\partial_{1}\psi\dif y&(=0)\\
& +  \int_{\mathcal{O}_{\lambda}}\sum_{i,j}\underbrace{(\partial_{2}\varphi_{j})(\partial_{3}\varphi_{i})(w_{1}(y)\cdot\nu_{x_{i}x_{j}y})}_{=:z_{\mathrm{I}}}\partial_{1}\psi\dif y =: \mathrm{I}_{3},& 
\end{align*}
where we have used that $\sum_{i}\partial_{3}\varphi_{i}=0$ on $\mathcal{O}_{\lambda}$ for the first, $\sum_{j}\partial_{2}\varphi_{j}=0$ on $\mathcal{O}_{\lambda}$ for the second and $\sum_{k}\varphi_{k}=1$ on $\mathcal{O}_{\lambda}$ for the ultimate term. By a similar argument as above, the sum in the integrand of $\mathrm{I}_{3}$ has an integrable majorant, whereby we may change the sum and the integral. Hence, integrating by parts with respect to $\partial_{2}$, 
\begin{align*}
\mathrm{I}_{3} = \mathrm{I}_{3}^{1} & := \sum_{ij}\int_{\mathcal{O}_{\lambda}}\partial_{2}(\varphi_{j}(\partial_{3}\varphi_{i})(w_{1}(y)\cdot\nu_{x_{i}x_{j}y})\partial_{1}\psi)\dif y & (=T_{1})\\
& -\sum_{ij}\int_{\mathcal{O}_{\lambda}}(\varphi_{j}(\partial_{23}\varphi_{i})(w_{1}(y)\cdot\nu_{x_{i}x_{j}y})\partial_{1}\psi)\dif y& (=T_{2})\\
& -\sum_{ij}\int_{\mathcal{O}_{\lambda}}(\varphi_{j}(\partial_{3}\varphi_{i})(\partial_{2}w_{1}(y)\cdot\nu_{x_{i}x_{j}y})\partial_{1}\psi)\dif y& (=T_{3})\\
& -\sum_{ij}\int_{\mathcal{O}_{\lambda}}(\varphi_{j}(\partial_{3}\varphi_{i})(w_{1}(y)\cdot\partial_{2}\nu_{x_{i}x_{j}y})\partial_{1}\psi)\dif y& (=T_{4})\\
& -\sum_{ij}\int_{\mathcal{O}_{\lambda}}(\varphi_{j}(\partial_{3}\varphi_{i})(w_{1}(y)\cdot\nu_{x_{i}x_{j}y})\partial_{12}\psi)\dif y& (=T_{5}), 
\end{align*}
but on the other hand, now integrating by parts with respect to $\partial_{3}$, 
\begin{align*}
\mathrm{I}_{3} = \mathrm{I}_{3}^{2} & := \sum_{ij}\int_{\mathcal{O}_{\lambda}}\partial_{3}(\varphi_{i}(\partial_{2}\varphi_{j})(w_{1}(y)\cdot\nu_{x_{i}x_{j}y})\partial_{1}\psi)\dif y& (=T_{6}) \\ 
& - \sum_{ij}\int_{\mathcal{O}_{\lambda}}(\varphi_{i}(\partial_{23}\varphi_{j})(w_{1}(y)\cdot\nu_{x_{i}x_{j}y})\partial_{1}\psi)\dif y& (=T_{7}) \\ 
& - \sum_{ij}\int_{\mathcal{O}_{\lambda}}(\varphi_{i}(\partial_{2}\varphi_{j})(\partial_{3}w_{1}(y)\cdot\nu_{x_{i}x_{j}y})\partial_{1}\psi)\dif y& (=T_{8}) \\ 
& - \sum_{ij}\int_{\mathcal{O}_{\lambda}}(\varphi_{i}(\partial_{2}\varphi_{j})(w_{1}(y)\cdot\partial_{3}\nu_{x_{i}x_{j}y})\partial_{1}\psi)\dif y& (=T_{9}) \\ 
& - \sum_{ij}\int_{\mathcal{O}_{\lambda}}(\varphi_{i}(\partial_{2}\varphi_{j})(w_{1}(y)\cdot\partial_{3}\nu_{x_{i}x_{j}y})\partial_{13}\psi)\dif y& (=T_{10}). 
\end{align*}
We then have $\mathrm{I}_{3}=\frac{1}{2}(\mathrm{I}_{3}^{1}+\mathrm{I}_{3}^{2})$. To proceed further, note that $T_{1}=T_{6}=0$ by the fundamental theorem of calculus. Moreover, $T_{2}+T_{7}=0$, which follows from permuting indices $i\leftrightarrow j$ in $T_{2}$ and using the antisymmetry property $\nu_{x_{i}x_{j}y}=-\nu_{x_{j}x_{i}y}$:
\begin{align*}
T_{2} & = -\sum_{ji}\int_{\mathcal{O}_{\lambda}}(\varphi_{i}(\partial_{23}\varphi_{j})(w_{1}(y)\cdot\nu_{x_{j}x_{i}y})\partial_{1}\psi)\dif y \\ 
& = \sum_{ji}\int_{\mathcal{O}_{\lambda}}(\varphi_{i}(\partial_{23}\varphi_{j})(w_{1}(y)\cdot\nu_{x_{i}x_{j}y})\partial_{1}\psi)\dif y = - T_{7}.
\end{align*}
For treating terms $T_{3}$ and $T_{8}$, define the smooth function $\overline{v}_{\mathrm{I}}\colon\mathcal{O}_{\lambda}\to\R$ by 
\begin{align*}
\overline{v}_{\mathrm{I}} & := \sum_{ij}(\varphi_{j}(\partial_{3}\varphi_{i})(\partial_{2}w_{1}(y)\cdot\nu_{x_{i}x_{j}y}))+(\varphi_{i}(\partial_{2}\varphi_{j})(\partial_{3}w_{1}(y)\cdot\nu_{x_{i}x_{j}y}))
\end{align*}
By an argument similar to the one employed in~\eqref{eq:v1def}ff., we have $\overline{v}_{\mathrm{I}}\in\sobo_{0}^{1,1}(\mathcal{O}_{\lambda})$. Extending it by zero to $\widetilde{\overline{v}}_{\mathrm{I}}\in\sobo_{0}^{1,1}(\R^{3})$, then obtain 
\begin{align}\label{eq:AdrianMonk}
T_{3}+T_{8} & = \int_{\R^{3}}\partial_{1}\widetilde{\overline{v}}_{\mathrm{I}} \cdot \psi\dif x.
\end{align}
Since $\mathrm{I}_{3}=\frac{1}{2}(\mathrm{I}_{3}^{1}+\mathrm{I}_{3}^{2})$, the above arguments, permuting $i \leftrightarrow j$ in $I_3^2$ and \eqref{eq:AdrianMonk} combine to
\begin{align*}
\mathrm{I}_{3}  = & -\frac{1}{2}\sum_{ij}\int_{\mathcal{O}_{\lambda}}(\varphi_{j}(\partial_{3}\varphi_{i})(w_{1}(y)\cdot ((x_{i}-x_{j})\times e_{2}))\partial_{1}\psi)\dif y & (=\tfrac{1}{2}T_{4})\\ 
& + \frac{1}{2}\sum_{ij}\int_{\mathcal{O}_{\lambda}}(\varphi_{j}(\partial_{2}\varphi_{i})(w_{1}(y)\cdot((x_{i}-x_{j})\times e_{3}))\partial_{1}\psi)\dif y & (=\tfrac{1}{2}T_{9}) \\ 
& -\frac{1}{2}\sum_{ij}\int_{\mathcal{O}_{\lambda}}(\varphi_{j}(\partial_{3}\varphi_{i})(w_{1}(y)\cdot\nu_{x_{i}x_{j}y})\partial_{12}\psi)\dif y & (=\tfrac{1}{2}T_{5})\\
& + \frac{1}{2}\sum_{ij}\int_{\mathcal{O}_{\lambda}}(\varphi_{j}(\partial_{2}\varphi_{i})(w_{1}(y)\cdot\nu_{x_{i}x_{j}y})\partial_{13}\psi)\dif y & (=\tfrac{1}{2}T_{10})\\
& + \frac{1}{2}\int_{\R^{3}}\partial_{1}\widetilde{\overline{v}}_{\mathrm{I}}\psi\dif y.
\end{align*}
Next note that
\begin{align*}
\frac{1}{2}T_{4} = & -\frac{1}{2}\sum_{ij}\int_{\mathcal{O}_{\lambda}}(\varphi_{j}(\partial_{3}\varphi_{i})(w_{1}(y)\cdot ((x_{i}-y)\times e_{2}))\partial_{1}\psi)\dif y \\ 
& -\frac{1}{2}\sum_{ij}\int_{\mathcal{O}_{\lambda}}(\varphi_{j}(\partial_{3}\varphi_{i})(w_{1}(y)\cdot ((y-x_{j})\times e_{2}))\partial_{1}\psi)\dif y \\ 
& = -\frac{1}{2}\sum_{i}\int_{\mathcal{O}_{\lambda}}((\partial_{3}\varphi_{i})(w_{1}(y)\cdot ((x_{i}-y)\times e_{2}))\partial_{1}\psi)\dif y\\
& = \frac{1}{2}\sum_{i}\int_{\mathcal{O}_{\lambda}}(\varphi_{i}\partial_{3}w_{1}(y)\cdot ((x_{i}-y)\times e_{2}))\partial_{1}\psi)\dif y\\
& + \frac{1}{2}\int_{\mathcal{O}_{\lambda}}(w_{1}(y)\cdot (-e_{3}\times e_{2})\partial_{1}\psi)\dif y \\ 
& + \frac{1}{2}\sum_{i}\int_{\mathcal{O}_{\lambda}}(\varphi_{i}w_{1}(y)\cdot ((x_{i}-y)\times e_{2})\partial_{13}\psi)\dif y.
\end{align*}
As above, we use $w\in\hold^{\infty}(\R^{3};\rdrei)$ to see that the function 
\begin{align*}
v_{\mathrm{I}}^2(y):= -\frac{1}{2}\sum_{i}\varphi_{i}\partial_{3}w_{1}(y)\cdot ((x_{i}-y)\times e_{2}))
\end{align*}
belongs to $\sobo_{0}^{1,1}(\mathcal{O}_{\lambda})$, and hence, again denoting its trivial extension to $\R^{3}$ by $v_{\mathrm{I}}^3$ and recalling that $e_{2}\times e_{3}=e_{1}$, 
\begin{align}\label{eq:AdrianMonk1}
\begin{split}
\frac{1}{2}(T_{4}) = \frac{1}{2}\int_{\R^{3}}\partial_{1}v_{\mathrm{I}}^3 \psi\dif x & + \frac{1}{2}\int_{\mathcal{O}_{\lambda}}(w_{11}(y)\partial_{1}\psi)\dif y \\ &  - \frac{1}{2}\sum_{i}\int_{\mathcal{O}_{\lambda}}(\varphi_{i}w_{1}(y)\cdot ((x_{i}-y)\times e_{2})\partial_{13}\psi)\dif y \\
\end{split}
\end{align}
Handling the summand $\frac12 T_{9}$ in the same fashion (with the roles of the indices $2$ and $3$ swapped) we arrive at
\begin{align}\label{eq:T4T10}
\begin{split}
\frac{1}{2}(T_{4}+T_{9}) = \int_{\R^{3}}\partial_{1} v_{\mathrm{I}}^4 \psi\dif x & + \int_{\mathcal{O}_{\lambda}}(w_{11}(y)\partial_{1}\psi)\dif y \\ &  + \frac{1}{2}\sum_{i}\int_{\mathcal{O}_{\lambda}}(\varphi_{i}w_{1}(y)\cdot ((x_{i}-y)\times e_{2})\partial_{13}\psi)\dif y\\
&+ \frac{1}{2}\sum_{i}\int_{\mathcal{O}_{\lambda}}(\varphi_{i}w_{1}(y)\cdot ((x_{i}-y)\times e_{3})\partial_{12}\psi)\dif y
\end{split}
\end{align}
for some $v_{\mathrm{I}}^4 \in W^{1,1}_0(\mathcal{O}_{\lambda},\R^3)$. To summarise, there exists $v_{\mathrm{I}}\in W_0^{1,1}(\mathcal{O}_{\lambda};\R^3)$, such that \begin{align} \label{eq:Isummary}
    \begin{split}
\textup{(I)}&= \int_{\R^{3}}\partial_{1} v_{\mathrm{I}} \psi\dif x  + \int_{\mathcal{O}_{\lambda}}(w_{11}(y)\partial_{1}\psi)\dif y \\ &  + \frac{1}{2}\sum_{i}\int_{\mathcal{O}_{\lambda}}(\varphi_{i}w_{1}(y)\cdot ((x_{i}-y)\times e_{2})\partial_{13}\psi)\dif y \\&
- \frac{1}{2}\sum_{i}\int_{\mathcal{O}_{\lambda}}(\varphi_{i}w_{1}(y)\cdot ((x_{i}-y)\times e_{3})\partial_{12}\psi)\dif y \\
&-\frac{1}{2}\sum_{ij}\int_{\mathcal{O}_{\lambda}}(\varphi_{j}(\partial_{3}\varphi_{i})(w_{1}(y)\cdot\nu_{x_{i}x_{j}y})\partial_{12}\psi)\dif y \\&
+\frac{1}{2}\sum_{ij}\int_{\mathcal{O}_{\lambda}}(\varphi_{j}(\partial_{2}\varphi_{i})(w_{1}(y)\cdot\nu_{x_{i}x_{j}y})\partial_{13}\psi)\dif y
\end{split}
\end{align}
The same calculations with the coordinates $1 \rightarrow 2 \rightarrow 3 \rightarrow 1$ permuted imply that there exist $v_{\mathrm{I}\mathrm{I}}, v_{\mathrm{I}\mathrm{I}\mathrm{I}} \in \sobo_0^{1,1}(\mathcal{O}_{\lambda})$, such that
 \begin{align} \label{eq:IIsummary}
    \begin{split}
\textup{(II)}&= \int_{\R^{3}}\partial_{2} v_{\mathrm{I}\mathrm{I}} \psi\dif x  + \int_{\mathcal{O}_{\lambda}}(w_{12}(y)\partial_{2}\psi)\dif y \\ &  + \frac{1}{2}\sum_{i}\int_{\mathcal{O}_{\lambda}}(\varphi_{i}w_{1}(y)\cdot ((x_{i}-y)\times e_{3})\partial_{21}\psi)\dif y \\&
- \frac{1}{2}\sum_{i}\int_{\mathcal{O}_{\lambda}}(\varphi_{i}w_{1}(y)\cdot ((x_{i}-y)\times e_{1})\partial_{23}\psi)\dif y \\
&-\frac{1}{2}\sum_{ij}\int_{\mathcal{O}_{\lambda}}(\varphi_{j}(\partial_{1}\varphi_{i})(w_{1}(y)\cdot\nu_{x_{i}x_{j}y})\partial_{23}\psi)\dif y \\&
+\frac{1}{2}\sum_{ij}\int_{\mathcal{O}_{\lambda}}(\varphi_{j}(\partial_{3}\varphi_{i})(w_{1}(y)\cdot\nu_{x_{i}x_{j}y})\partial_{21}\psi)\dif y
\end{split}
\end{align}
and
 \begin{align} \label{eq:IIIsummary}
    \begin{split}
\textup{(II)}&= \int_{\R^{3}}\partial_{3} v_{\mathrm{I}\mathrm{I}\mathrm{I}} \psi\dif x  + \int_{\mathcal{O}_{\lambda}}(w_{13}(y)\partial_{3}\psi)\dif y \\ &  + \frac{1}{2}\sum_{i}\int_{\mathcal{O}_{\lambda}}(\varphi_{i}w_{1}(y)\cdot ((x_{i}-y)\times e_{1})\partial_{32}\psi)\dif y \\&
- \frac{1}{2}\sum_{i}\int_{\mathcal{O}_{\lambda}}(\varphi_{i}w_{1}(y)\cdot ((x_{i}-y)\times e_{2})\partial_{31}\psi)\dif y \\
&-\frac{1}{2}\sum_{ij}\int_{\mathcal{O}_{\lambda}}(\varphi_{j}(\partial_{2}\varphi_{i})(w_{1}(y)\cdot\nu_{x_{i}x_{j}y})\partial_{31}\psi)\dif y \\&
+\frac{1}{2}\sum_{ij}\int_{\mathcal{O}_{\lambda}}(\varphi_{j}(\partial_{1}\varphi_{i})(w_{1}(y)\cdot\nu_{x_{i}x_{j}y})\partial_{32}\psi)\dif y
\end{split}
\end{align}
Combining \eqref{eq:Isummary}, \eqref{eq:IIsummary} and \eqref{eq:IIIsummary}, we get that there is $h \in \lebe^1(\mathcal{O}_{\lambda})$, $h= \partial_1 v_{\mathrm{I}}+ \partial_2 v_{\mathrm{I}\mathrm{I}} + \partial_3 v_{\mathrm{I}\mathrm{I}\mathrm{I}}$, such that
\begin{align} \label{eq:distdiv}
    \int_{\mathcal{O}_{\lambda}} \tilde{w}_1 \cdot \nabla \psi \dif x = \int_{\mathcal{O}_{\lambda}} h \psi \dif x + \int_{\mathcal{O}_{\lambda}} w_1 \cdot \nabla \psi \dif x.
\end{align}
Recall that $w$ satisfies $\di w=0$ and that $ \tilde{w}= w$ on $O_{\lambda}^{\complement}$. Therefore,
\begin{align*}
    \int_{\R^3} \tilde{w}_1 \cdot \nabla \psi \dif x &= \int_{\mathcal{O}_{\lambda}^{\complement}} \tilde{w}_1 \cdot \nabla \psi \dif x + \int_{\mathcal{O}_{\lambda}} \tilde{w}_1 \cdot \nabla \psi \dif x \\
    &= \int_{\mathcal{O}_{\lambda}^{\complement}} w_1 \cdot \nabla \psi \dif x +  \int_{\mathcal{O}_{\lambda}} w_1 \cdot \nabla \psi \dif x +  \int_{\mathcal{O}_{\lambda}} h \psi \dif x \\
    &=  \int_{\mathcal{O}_{\lambda}} h \psi \dif x.
\end{align*}
Therefore,  $\di(T_{\lambda} w) \in \lebe^1(\R^3;\R^3)$.
\end{proof} 
As an immediate consequence of Lemmas~\ref{lem:divfreelocal} and~\ref{lem:globaldivfree}, we obtain the following
\begin{corollary}\label{cor:divsymfreeglobal}
Let $w\in(\hold^{\infty}\cap\lebe^{1})(\R^{3};\rdrei)$ satisfy $\mathrm{div}(w)=0$ and define $T_{\lambda}w$ for $\lambda>0$ by \eqref{eq:deftruncation}. Then for $\mathscr{L}^{1}$-almost every $\lambda>0$, $\mathrm{div}(T_{\lambda}w)=0$ in $\mathscr{D}'(\R^{3};\R^{3})$. 
\end{corollary}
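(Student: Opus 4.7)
The plan is to combine Lemmas \ref{lem:divfreelocal} and \ref{lem:globaldivfree} with a standard measure-theoretic observation about the boundary of the bad set $\mathcal{O}_{\lambda}=\{\mathcal{M}w>\lambda\}$. By Lemma \ref{lem:globaldivfree}, the distribution $\di(T_{\lambda}w)$ is represented by a function $h_{\lambda}\in\lebe^{1}(\R^{3};\R^{3})$. First I would argue that $h_{\lambda}$ is supported on $\partial\mathcal{O}_{\lambda}$ in the measure-theoretic sense: on the open set $\mathcal{O}_{\lambda}$ the map $T_{\lambda}w$ is smooth and Lemma \ref{lem:divfreelocal} gives $\di(T_{\lambda}w)=0$ classically, while on the interior of the closed set $\mathcal{O}_{\lambda}^{\complement}$ one has the pointwise identity $T_{\lambda}w=w$ together with $w\in\hold^{\infty}\cap\lebe^{1}$ and $\di(w)=0$, so again the classical divergence vanishes in a neighbourhood of every such point. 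Hence $h_{\lambda}=0$ $\mathscr{L}^{3}$-a.e.\ on $\R^{3}\setminus\partial\mathcal{O}_{\lambda}$.

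The remaining task, and the key observation in the argument, is to show that $\mathscr{L}^{3}(\partial\mathcal{O}_{\lambda})=0$ for $\mathscr{L}^{1}$-a.e.\ $\lambda>0$. Since $\mathcal{M}w$ is lower semicontinuous, $\mathcal{O}_{\lambda}$ is open and $\{\mathcal{M}w\geq\lambda\}$ is closed, from which $\partial\mathcal{O}_{\lambda}\subset\{\mathcal{M}w=\lambda\}$. I would introduce the distribution function $F(\lambda):=\mathscr{L}^{3}(\{\mathcal{M}w>\lambda\})$, which is non-increasing and, by the weak-$(1,1)$-estimate \eqref{eq:weak11}, satisfies $F(\lambda)\leq c\lambda^{-1}\|w\|_{\lebe^{1}(\R^{3})}<\infty$ for every $\lambda>0$. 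A standard continuity-from-above computation gives
\begin{equation*}
F(\lambda^{-})-F(\lambda)=\mathscr{L}^{3}(\{\mathcal{M}w=\lambda\}),
\end{equation*}
so discontinuities of $F$ correspond exactly to level sets of positive measure. Since a monotone real-valued function has at most countably many discontinuities, the set $\{\lambda>0\colon\mathscr{L}^{3}(\{\mathcal{M}w=\lambda\})>0\}$ is at most countable, and consequently $\mathscr{L}^{3}(\partial\mathcal{O}_{\lambda})=0$ for $\mathscr{L}^{1}$-a.e.\ $\lambda>0$.

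Combining the two steps: for every such $\lambda$, $h_{\lambda}=0$ $\mathscr{L}^{3}$-a.e.\ on $\R^{3}$, hence $\di(T_{\lambda}w)=0$ in $\mathscr{D}'(\R^{3};\R^{3})$, which is the claim. No genuine obstacle arises here; the construction really pays off in Lemmas \ref{lem:divfreelocal} and \ref{lem:globaldivfree}, and the corollary is essentially a bookkeeping consequence. The only point requiring mild care is the verification that $h_{\lambda}$ vanishes classically in the interior of $\mathcal{O}_{\lambda}^{\complement}$, but this is immediate from the pointwise identity $T_{\lambda}w=w$ there and the smoothness and solenoidality of $w$.
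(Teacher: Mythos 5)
Your overall structure matches the paper's proof exactly: combine Lemma~\ref{lem:globaldivfree} (regularity of the distributional divergence) with Lemma~\ref{lem:divfreelocal} (vanishing on $\mathcal{O}_{\lambda}$) and the observation that $\di(T_{\lambda}w)=0$ classically on the interior of $\mathcal{O}_{\lambda}^{\complement}$, then argue that $\partial\mathcal{O}_{\lambda}$ is $\mathscr{L}^{3}$-null for a.e.~$\lambda$. Your distribution-function computation for the countability of bad $\lambda$ is clean and correct.

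However, there is a genuine gap in the step where you derive $\partial\mathcal{O}_{\lambda}\subset\{\mathcal{M}w=\lambda\}$. You write that lower semicontinuity of $\mathcal{M}w$ makes $\{\mathcal{M}w\geq\lambda\}$ closed. This is false: lower semicontinuity of $f$ is equivalent to $\{f\leq\lambda\}$ being closed (equivalently $\{f>\lambda\}$ being open), whereas closedness of $\{f\geq\lambda\}$ requires \emph{upper} semicontinuity. For a merely lower semicontinuous $f$, a point $x\in\partial\{f>\lambda\}$ satisfies $f(x)\leq\lambda$ but may have $f(x)<\lambda$; in that case $x\in\partial\{f>\mu\}$ for every $\mu\in[f(x),\limsup_{y\to x}f(y)]$, a set of positive $\mathscr{L}^{1}$-measure, and the whole countability argument would break down. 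What saves the proof --- and what the paper explicitly notes --- is that for $w\in\hold^{\infty}\cap\lebe^{1}$ the maximal function $\mathcal{M}w$ is in fact \emph{continuous} (upper semicontinuity follows from the decay $A_r(x)\lesssim r^{-3}\|w\|_{\lebe^1}$ together with the locally uniform convergence $A_r(x)\to|w(x)|$ as $r\searrow 0$, so that the supremum is effectively over a compact range of radii). Continuity gives exactly $\partial\mathcal{O}_{\lambda}\subset\{\mathcal{M}w=\lambda\}$, after which your argument goes through unchanged. You should replace the appeal to lower semicontinuity by the correct appeal to continuity of $\mathcal{M}w$ for $w\in\hold^{\infty}\cap\lebe^{1}$.

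Two very minor points: on $\R^{3}\setminus\overline{\mathcal{O}_{\lambda}}$ you should appeal to $T_{\lambda}w=w$ holding pointwise on the \emph{open} set $\R^{3}\setminus\overline{\mathcal{O}_{\lambda}}$ (not merely on $\mathcal{O}_{\lambda}^{\complement}$) to justify differentiating classically there; and since $F(\lambda)=\mathscr{L}^{3}(\{\mathcal{M}w>\lambda\})$ is finite only for $\lambda>0$, the countability-of-jumps argument should be run on $(a,\infty)$ for each $a>0$ and then unioned, but this is routine.
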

\begin{proof}
Observe that on $\R^3 \setminus \partial \mathcal{O}_{\lambda}$ the function $T_{\lambda} w$ is strongly differentiable and, as $u$ is solenoidal on $\R^3$ and $\di T_{\lambda} w=0$ on $\mathcal{O}_{\lambda}$ (Lemma \ref{lem:divfreelocal}), $\di(T_{\lambda}w) = 0$ on the open set $\R^3 \setminus \partial \mathcal{O}_{\lambda}$. As $w \in \hold^{\infty}$, $\mathcal{M} w \in \hold(\R^3)$ and the set 
\begin{align*}
    \{ \lambda > 0 \colon \mathscr{L}^3(\partial \mathcal{O}_{\lambda}) \neq 0\} \subset \{ \lambda > 0 \colon \mathcal{M} w = \lambda\}
\end{align*}
is an $\mathscr{L}^{1}$-null set. Hence, for all $\lambda$ not contained in this set, $\di (T_{\lambda} w) \in \lebe^1(\R^{3};\R^{3})$ and $\di (T_{\lambda}w)=0$ $\mathscr{L}^{3}$-a.e.. Thus, for $\mathscr{L}^{1}$-almost every $\lambda$, $\di (T_{\lambda} w)=0$ in $\mathscr{D}'(\R^3;\R^3)$.
\end{proof}
\begin{remarkise} It is not clear to us whether  $T_{\lambda}w$ belongs to $\sobo^{1,1}(\R^{3};\rdrei)$. This is so because $T_{\lambda}w$ is precisely constructed in a way such that handling of the divergence is possible (cf.~Lemma~\ref{lem:globaldivfree}), whereas the control of the full gradients does not come up as a consequence of Lemma~\ref{lem:ABprops}; in particular, there seems to be no reason for the series in \eqref{eq:deftruncation} to converge in $\sobo_{0}^{1,1}(\R^{3};\rdrei)$. Note that, if it did, we could directly infer from Lemma~\ref{lem:divfreelocal} that $\mathrm{div}(T_{\lambda}w)=0$. 
\end{remarkise}
\subsection{Strong stability and the proof of Proposition~\ref{prop:smoothtrunc}}\label{sec:strongstab}
In view of Lemma~\ref{lem:linftybound} and Corollary~\ref{cor:divsymfreeglobal}, Proposition~\ref{prop:smoothtrunc} will follow provided we can prove the strong stability (cf.~Proposition~\ref{prop:smoothtrunc}~\ref{item:truncsmooth1}). Towards this aim, we begin with 
\begin{lemma} \label{lem:setestimate}
Then there exists a purely dimensional constant $C>0$ such that, for each $w \in \lebe^1(\R^3;\rdrei)$ and each $\lambda >0$, we have
\begin{align*}
\mathscr{L}^3( \{ \mathcal{M} w > \lambda\}) \leq \frac{C}{\lambda}\int_{\{ |w| > \lambda/2\}} \vert w(x) \vert \dif x
\end{align*}
\end{lemma}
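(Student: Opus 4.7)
The plan is to reduce this to the standard weak-$(1,1)$ bound \eqref{eq:weak11} for the Hardy--Littlewood maximal operator by the classical truncation trick. First, I would split $w$ additively into a bounded part and an integrable tail: set
\begin{align*}
w = w^{\flat} + w^{\sharp}, \qquad w^{\flat} := w\,\mathbbm{1}_{\{|w|\leq \lambda/2\}}, \qquad w^{\sharp} := w\,\mathbbm{1}_{\{|w|> \lambda/2\}},
\end{align*}
so that $\|w^{\flat}\|_{\lebe^{\infty}(\R^{3})}\leq \lambda/2$ and $w^{\sharp}\in\lebe^{1}(\R^{3};\rdrei)$.

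Next, I would exploit the pointwise sublinearity of $\mathcal{M}$. Since $\mathcal{M}(w^{\flat})\leq \|w^{\flat}\|_{\lebe^{\infty}}\leq \lambda/2$ everywhere, and since $\mathcal{M}(w)(x)\leq \mathcal{M}(w^{\flat})(x)+\mathcal{M}(w^{\sharp})(x)$ for every $x\in\R^{3}$, we obtain the inclusion
\begin{align*}
\{\mathcal{M}w>\lambda\}\subset \{\mathcal{M}(w^{\sharp})>\lambda/2\}.
\end{align*}

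Finally, I would apply the weak-$(1,1)$ estimate \eqref{eq:weak11} to $w^{\sharp}\in\lebe^{1}(\R^{3};\rdrei)$, giving
\begin{align*}
\mathscr{L}^{3}(\{\mathcal{M}w>\lambda\})\leq \mathscr{L}^{3}(\{\mathcal{M}(w^{\sharp})>\lambda/2\})\leq \frac{2c}{\lambda}\|w^{\sharp}\|_{\lebe^{1}(\R^{3})}=\frac{2c}{\lambda}\int_{\{|w|>\lambda/2\}}|w|\dif x,
\end{align*}
which is the asserted bound with $C=2c$ where $c=c(3)$ is the dimensional constant from \eqref{eq:weak11}. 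There is essentially no obstacle here; the lemma is a textbook consequence of sublinearity combined with the weak-type bound, and the only point worth emphasising is that the threshold $\lambda/2$ (as opposed to $\lambda$) in the truncation is precisely what allows the $\lebe^{\infty}$-contribution $\mathcal{M}(w^{\flat})$ to be absorbed while retaining the factor $1/\lambda$ on the right-hand side.
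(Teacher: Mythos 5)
Your proof is correct and follows essentially the same route as the paper: split off the part of $w$ where $|w|\leq\lambda/2$, note that its maximal function is at most $\lambda/2$, and apply the weak-$(1,1)$ bound \eqref{eq:weak11} to the remaining $\lebe^{1}$-tail. The only cosmetic difference is that the paper (following Zhang) applies \eqref{eq:weak11} to the continuous truncation $h(x)=\max\{0,|w(x)|-\lambda/2\}$ rather than to the sharp cutoff $w\,\mathbbm{1}_{\{|w|>\lambda/2\}}$; both choices satisfy the same pointwise inequality $\mathcal{M}w\leq\lambda/2+\mathcal{M}(\cdot)$ and yield the claimed estimate.
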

The rough idea of the proof of this statement is to use the weak-$(1,1)$-estimate for the Hardy-Littlewood maximal operator $\mathcal{M}$ (cf.~\eqref{eq:HLMO}) for the function $h$ defined via \begin{align}\label{eq:zhangtrick}
    h(x) = \max \{0,\vert w(x) \vert -\lambda/2\}, 
\end{align}
see \textsc{Zhang} \cite{Zhang92} for the details. 
As an important consequence of Lemma~\ref{lem:setestimate} and the $\lebe^{\infty}$-bound of $w_{\lambda}$ is the following:
\begin{corollary} \label{cor:l1estimate}
Let $w\in\lebe^{1}(\R^{3};\rdrei)$ satisfy $\mathrm{div}(w)=0$. Moreover, for $\lambda>0$, let $w_{\lambda}:=T_{\lambda}w$ be as in \eqref{eq:deftruncation}. Then we have with a purely dimensional constant $C>0$
\begin{align}\label{eq:l1estimate}
    \Vert w - w_{\lambda} \Vert_{\lebe^1(\R^{3})} \leq C \int_{\{\vert w \vert > \lambda/2\}} \vert w \vert \dif x.
\end{align}
\end{corollary}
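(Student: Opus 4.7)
The proof plan is straightforward given the two ingredients at our disposal: the pointwise bound $\|T_\lambda w\|_{L^\infty} \leq C\lambda$ from Lemma~\ref{lem:linftybound}, and the measure bound $\mathscr{L}^3(\{\mathcal{M}w > \lambda\}) \leq C\lambda^{-1}\int_{\{|w|>\lambda/2\}}|w|\dif x$ from Lemma~\ref{lem:setestimate}. The key observation is that $w - w_\lambda$ vanishes outside the bad set $\mathcal{O}_\lambda = \{\mathcal{M}w > \lambda\}$ by construction, so the whole integral reduces to the bad set, and on the bad set both $|w|$ and $|w_\lambda|$ admit integrable majorants controlled by the right-hand side.

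First, I record that $w \equiv w_\lambda$ on $\mathcal{O}_\lambda^\complement$ from definition~\eqref{eq:deftruncation}, whence
\[
\|w - w_\lambda\|_{\lebe^1(\R^3)} = \int_{\mathcal{O}_\lambda} |w - w_\lambda| \dif x \leq \int_{\mathcal{O}_\lambda} |w| \dif x + \int_{\mathcal{O}_\lambda} |w_\lambda| \dif x.
\]
For the second summand I combine Lemma~\ref{lem:linftybound} with Lemma~\ref{lem:setestimate} to obtain
\[
\int_{\mathcal{O}_\lambda} |w_\lambda| \dif x \leq C\lambda\, \mathscr{L}^3(\mathcal{O}_\lambda) \leq C \int_{\{|w| > \lambda/2\}} |w| \dif x.
\]

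For the first summand I split $\mathcal{O}_\lambda$ according to whether $|w|$ is above or below the threshold $\lambda/2$. On $\mathcal{O}_\lambda \cap \{|w| > \lambda/2\}$ the integral is trivially bounded by $\int_{\{|w| > \lambda/2\}}|w|\dif x$. On $\mathcal{O}_\lambda \cap \{|w| \leq \lambda/2\}$, pointwise I have $|w| \leq \lambda/2$ and so, by another application of Lemma~\ref{lem:setestimate},
\[
\int_{\mathcal{O}_\lambda \cap \{|w| \leq \lambda/2\}} |w| \dif x \leq \tfrac{\lambda}{2} \mathscr{L}^3(\mathcal{O}_\lambda) \leq C \int_{\{|w| > \lambda/2\}} |w| \dif x.
\]
Summing the three contributions gives the claim~\eqref{eq:l1estimate}.

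There is no essential obstacle here; the proof is a routine two-ingredient argument and, indeed, this is the reason the paper introduces Lemma~\ref{lem:setestimate} with the \textsc{Zhang}-style truncation~\eqref{eq:zhangtrick} just before the corollary. The only subtle point is the necessity of the factor $\tfrac{1}{2}$ in the threshold on the right-hand side: one must start from the sharpened weak-type estimate of Lemma~\ref{lem:setestimate} (which already has the $\lambda/2$ cutoff) rather than the plain weak-$(1,1)$-bound~\eqref{eq:weak11} for $\mathcal{M}$, since the latter would only yield a bound involving the full $\|w\|_{\lebe^1}$ and not the truncated tail $\int_{\{|w|>\lambda/2\}}|w|\dif x$ required for the strong stability in Proposition~\ref{prop:smoothtrunc}~\ref{item:truncsmooth1}.
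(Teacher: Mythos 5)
Your proof is correct and follows essentially the same route as the paper: split $\|w-w_\lambda\|_{\lebe^1}$ over $\mathcal{O}_\lambda$ into contributions from $w$ and $w_\lambda$, bound the latter via the $\lebe^\infty$-bound of Lemma~\ref{lem:linftybound} times $\mathscr{L}^3(\mathcal{O}_\lambda)$, and bound the former by splitting at a threshold and invoking Lemma~\ref{lem:setestimate}. The only cosmetic difference is that the paper splits $\int_{\mathcal{O}_\lambda}|w|$ at the level $\lambda$ rather than $\lambda/2$, which makes no material difference.
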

\begin{proof}
Recall that $\mathcal{O}_{\lambda}:=\{\mathcal{M}w>\lambda\}$. By construction, $w=w_{\lambda}$ on $\mathcal{O}_{\lambda}^{\complement}$. Therefore, \begin{align}\label{eq:combinat0}
    \Vert w - w_{\lambda} \Vert_{\lebe^1(\R^{3})} \leq \int_{\mathcal{O}_{\lambda}} \vert w - w_{\lambda}\vert \dif x \leq \int_{\mathcal{O}_{\lambda}} \vert w \vert \dif x +\int_{\mathcal{O}_{\lambda}} \vert w_{\lambda}\vert \dif x.
\end{align}
On the one hand, Lemma~\ref{lem:setestimate} gives us
\begin{align}\label{eq:combinat1}
     \int_{\mathcal{O}_{\lambda}} \vert w \vert \dif x \leq \lambda \mathscr{L}^3 (\mathcal{O}_{\lambda}) + \int_{\{|w| > \lambda\}} \vert w \vert \dif x \leq C\int_{\{|w|>\lambda/2\}}|w|\dif x,
\end{align}
and, on the other hand, using Lemma \ref{lem:linftybound} and Lemma~\ref{lem:setestimate},
\begin{align}\label{eq:combinat2}
\int_{\mathcal{O}_{\lambda}} \vert w_{\lambda}\vert \dif x \leq \Vert w_{\lambda} \Vert_{\lebe^{\infty}(\R^{3})}  \mathscr{L}^{3}(\mathcal{O}_{\lambda}) \leq C \int_{\{|w|>\lambda/2\}}|w|\dif x, 
\end{align}
$C>0$ still being a purely dimensional constant. In view of \eqref{eq:combinat0}, \eqref{eq:combinat1} and \eqref{eq:combinat2}, we obtain \eqref{eq:l1estimate}, and this completes the proof. 
\end{proof}

\begin{proof}[Proof of Proposition \ref{prop:smoothtrunc}] Let $w \in (\hold^{\infty} \cap \lebe^1)(\R^3;\rdrei)$ satisfy $\di(w) =0$ and let $\lambda >0$. Pick some $\widetilde{\lambda} \in (\lambda, 2 \lambda)$ such that $\mathscr{L}^3(\partial \mathcal{O}_{\widetilde{\lambda}})=0$ and define $w_{\lambda} := T_{\widetilde{\lambda}}$ and $\mathcal{U}_{\lambda} := \mathcal{O}_{\widetilde{\lambda}}$. Then \begin{enumerate}
    \item $w = w_{\lambda}$ on $\mathcal{U}_{\lambda}^{\complement}$ by construction.
    \item Lemma \ref{lem:setestimate} implies that \begin{align*}
      \mathscr{L}^3(\{w \neq w_{\lambda}\}) \leq \frac{c}{\widetilde{\lambda}} \int_{\{\vert w \vert > \widetilde{\lambda}/2\}} \vert w \vert \dif x \leq \frac{c}{\lambda} \int_{\{\vert w \vert > \lambda/2\}} \vert w \vert \dif x  .
    \end{align*}
    \item $\di (w_{\lambda}) =0$ in $\mathscr{D}'(\R^{3};\R^{3})$ by Corollary \ref{cor:divsymfreeglobal}.
    \item $\Vert w_{\lambda} \Vert_{\lebe^{\infty}(\R^3)} \leq c \tilde{\lambda} \leq 2 c \lambda$ by Lemma \ref{lem:linftybound}.
\end{enumerate}
 To summarise, $w_{\lambda}$ satisfies all the required properties, and the proof is complete.
\end{proof}
\subsection{Proof of Theorem~\ref{thm:main2}}\label{sec:proofThmmain2}
We now establish Theorem~\ref{thm:main2}, and hence let $\lambda>0$ be given. Let $u\in\lebe^{1}(\R^{3};\R_{\mathrm{sym}}^{3\times 3})$ satisfy $\di(u) =0$ and pick a sequence $(w^{j})\subset(\hold^{\infty}\cap\lebe^{1})(\R^{3};\R_{\mathrm{sym}}^{3\times 3})$ such that $w^{j}\to u$ strongly in $\lebe^{1}(\R^{3};\R_{\mathrm{sym}}^{3\times 3})$  as $j\to\infty$, still satisfying $\di(w_j)=0$ for each $j\in\mathbb{N}$. Such a sequence can be constructed by convolution with smooth bumps.

For $\lambda >0$ consider the truncation $w_{4\lambda}^j$ of $w^j$ according to Proposition \ref{prop:smoothtrunc}. Note that this sequence
is uniformly bounded in $\lebe^{\infty}$ by $4c \lambda$. Therefore, a suitable, non-relabeled subsequence converges in the weak*-sense to some $u^{\lambda}$ in $\lebe^{\infty}(\R^3;\rdrei)$. First of all,
\begin{align*}
    \Vert u^{\lambda} \Vert_{\lebe^{\infty}(\R^3)} \leq \sup_{j \in \N} \Vert w_{4 \lambda}^j  \Vert_{\lebe^{\infty}(\R^3)}\leq 4c \lambda, \quad \di(u^{\lambda}) =0.
\end{align*}
We claim that $w^j_{4 \lambda} \to u$ strongly in $\lebe^1$ on the set $\{ \mathcal{M} u \leq 2\lambda\}$ as $j\to\infty$, and hence $u^{\lambda}=u$ on 
$\{ \mathcal{M} u \leq 2\lambda\}$. If this claim is proven, then Lemma \ref{lem:setestimate} and Corollary \ref{cor:l1estimate} imply the small change  and strong stability properties \ref{item:thmmain2}, \ref{item:thmmain3} of Theorem \ref{thm:main2}. Therefore $u^{\lambda}$ will satisfy all properties displayed in Theorem \ref{thm:main2} and thus finish the proof.

It remains to show the claim. Recall that the maximal function $\mathcal{M}$ is sublinear. Thus, \begin{align} \label{eq:complement}
    \{ \mathcal{M}w^j > 4 \lambda \} \setminus  \{ \mathcal{M} (w^j - u) > 2 \lambda\}\subset \{ \mathcal{M} u > 2 \lambda \}.
\end{align}
Note that $\mathscr{L}^{3}(\{\mathcal{M}(w^{j}-u)>2\lambda\})$ converges to zero as $j\to\infty$ since  $w^j -u \to 0$ in $\lebe^1$ and $\mathcal{M}$ is weak-$(1,1)$. After picking a suitable, non-relabeled subsequence of $(w^j)$ we may suppose that $\Vert w^j - u \Vert_{\lebe^1(\R^{3})} \leq 2^{-j} \lambda$ for all $j\in\mathbb{N}$ and hence \begin{align*}
\mathscr{L}^3 \{ \mathcal{M} (w^j - u) > 2 \lambda\} \leq C 2^{-j}\qquad\text{for all}\;j\in\mathbb{N}.
\end{align*}
Therefore, for each $J\in\mathbb{N}$, the $\mathscr{L}^{3}$-measure of the set \begin{align*}
    E_J := \bigcup_{j >J} \{ \mathcal{M} (w^j - u) > 2 \lambda\}
\end{align*}
 can be bounded by $C 2^{-J}$. Due to \eqref{eq:complement}, we have $\{ \mathcal{M} u \leq 2 \lambda \} \setminus E_J \subset \{ \mathcal{M}w^j \leq 4 \lambda \}$ for $j>J$. Let us fix $J \in \N$ and bound the $\lebe^1$-norm of $w_{4 \lambda}^j -u$ on $\{ \mathcal{M} u \leq 2 \lambda\}$ for $j >J$: \begin{align*}
     \int_{\{ \mathcal{M} u \leq 2 \lambda\}} \vert w_{4 \lambda}^j - u \vert \dif x 
       & \leq \int_{E_J} \vert w_{4 \lambda}^j - u \vert \dif x + \int_{\{ \mathcal{M} u \leq 2 \lambda\} \setminus E_J} \vert w_{4 \lambda}^j - u \vert \dif x \\
       & \leq \int_{E_J} \vert w_{4 \lambda}^j \vert + \vert u \vert \dif x +  \int_{\{\mathcal{M}w^j \leq  4 \lambda\}} \vert w_{4 \lambda}^j - u \vert \dif x \\
      & \leq C 2^{-J} \lambda + \int_{E_J} \vert u \vert \dif x +\int_{\{\mathcal{M}w^j \leq 4 \lambda\}} \vert w^j - u \vert \dif x  \\
      & \leq C 2^{-J} \lambda + \int_{E_J} \vert u \vert \dif x +\Vert w^j - u \Vert_{\lebe^1(\R^3)}.
       \end{align*}
 Letting $J \to \infty$ yields $w_{4 \lambda}^j - u \to 0$ in $\lebe^1(\{\mathcal{M} u \leq 2 \lambda \})$. As $(w_{4\lambda}^j)$ weakly*-converges to $u^{\lambda}$ in $\lebe^{\infty}(\R^3,\rdrei)$, we conclude that $u = u^{\lambda}$ on $\{\mathcal{M} u \leq 2 \lambda \}$, proving the claim.\hfill $\square$

\section{Proof of Theorem~\ref{thm:main1}}\label{sec:calcvar}
The proof of Theorem \ref{thm:main1} heavily depends on the validity of the truncation theorem \ref{thm:main2}. In fact, Theorem \ref{thm:main1} has been proven in a different setting, where the divergence is replaced by some other differential operator (e.g. \cite{Zhang97,Schiffer21}). For convenience of the reader, let us shortly present the argument here.
First of all, note that the statement of Theorem \ref{thm:main2} also holds if we consider functions $u \in \lebe^1(\T_3;\rdrei)$ instead of functions defined on $\R^3$.
\begin{proposition}
\label{prop:versionofmain}
There exists $C>0$ with the following property: For all $u \in \lebe^1(\T_3;\rdrei)$ with $\di(u)=0$ in $\mathscr{D}'(\T_3;\R^3)$ and $\lambda>0$, there is $u_{\lambda} \in \lebe^1(\T_3;\rdrei)$ satisfying \begin{enumerate}
\item $\Vert u_{\lambda} \Vert_{\lebe^{\infty}} \leq C \lambda$. \emph{($\lebe^{\infty}$-bound)}
\item $\Vert u - u_{\lambda} \Vert_{\lebe^1} \leq C \int_{\{\vert u \vert > \lambda\}} \vert u \vert \dif x $. \emph{(Strong stability)}
\item $\mathscr{L}^{3} (\{ u \neq u_{\lambda} \}) \leq C \lambda^{-1} \int_{\{\vert u \vert > \lambda\} } \vert u \vert \dif x$. \emph{(Small change)} 
\item $\di(u_{\lambda})= 0$, i.e., the differential constraint is still satisfied.
\end{enumerate}
\end{proposition}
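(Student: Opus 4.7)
The plan is to establish Proposition~\ref{prop:versionofmain} by transferring the construction of Sections~\ref{sec:construction}--\ref{sec:proofThmmain2} from $\R^{3}$ to $\T_{3}$. The construction is local in a strong sense: $T_{\lambda}u$ as defined in~\eqref{eq:deftruncation} depends only on the Whitney decomposition of the open bad set $\mathcal{O}_{\lambda}=\{\mathcal{M}u>\lambda\}$ together with its subordinate partition of unity, whereas the downstream Lemmas~\ref{lem:ABprops}--\ref{cor:divsymfreeglobal} rest solely on the Gau\ss--Green theorem applied to simplices $\langle x_{i},x_{j},x_{k}\rangle$ and elementary product-rule manipulations. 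Both the Hardy--Littlewood maximal operator and the Whitney decomposition admit unambiguous torus analogues once one caps radii and sidelengths at a scale $r_{0}$ smaller than the injectivity radius of $\T_{3}$; specifically, I set $\mathcal{M}u(x):=\sup_{0<r<r_{0}}\dashint_{\ball_{r}(x)}|u|\dif y$, and the weak-$(1,1)$ bound~\eqref{eq:weak11} survives by a standard Vitali covering argument with a dimensional constant $C_{\ast}$.

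The principal case is the \emph{non-degenerate} regime $\mathcal{O}_{\lambda}\subsetneq\T_{3}$. Here I form a Whitney cover $\mathscr{W}=(Q_{j})$ of $\mathcal{O}_{\lambda}\subset\T_{3}$ satisfying \ref{item:W1}--\ref{item:W4} and \ref{item:P1}--\ref{item:P3}, with the additional sidelength restriction $\ell(Q_{j})<r_{0}/100$. This restriction is the sole genuine modification relative to $\R^{3}$: it guarantees that, for every interacting triple $(i,j,k)$ with $Q_{i}\cap Q_{j}\cap Q_{k}\neq\emptyset$, the vertices $x_{i},x_{j},x_{k}$ and the auxiliary point $z_{0}$ from Lemma~\ref{lem:JogiLoew} lift injectively into a common coordinate chart of $\T_{3}$, making the cross products $\nu_{ijk}$ and the simplex integrals of~\eqref{eq:ABdefine} unambiguously defined. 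Within any such chart, Lemmas~\ref{lem:ABprops} and \ref{lem:JogiLoew} retain their proofs verbatim, and Lemmas~\ref{lem:linftybound}, \ref{lem:divfreelocal}, \ref{lem:globaldivfree}, together with Corollary~\ref{cor:divsymfreeglobal}, transfer without change. The smoothing-and-weak-$\ast$-compactness argument of Section~\ref{sec:proofThmmain2}, combined with the torus versions of Lemma~\ref{lem:setestimate} and Corollary~\ref{cor:l1estimate}, then produces the required $u^{\lambda}\in\lebe^{\infty}(\T_{3};\rdrei)$ satisfying items~(a)--(d).

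The \emph{degenerate} regime $\mathcal{O}_{\lambda}=\T_{3}$ is confined by weak-$(1,1)$ to $\lambda\mathscr{L}^{3}(\T_{3})\leq C_{\ast}\|u\|_{\lebe^{1}(\T_{3})}$, which renders the right-hand sides of the stability and small-change estimates comparatively large and permits a closure by elementary hand-constructions. If $\|u\|_{\lebe^{\infty}(\T_{3})}\leq\lambda$, I simply take $u_{\lambda}=u$, for which all four assertions are immediate. Otherwise, a brief case distinction according to whether $\int_{\{|u|>\lambda\}}|u|\dif x$ is comparable to $\|u\|_{\lebe^{1}(\T_{3})}$ or not allows one to choose $u_{\lambda}\in\{0,\,\dashint_{\T_{3}}u\dif x\}$: in the first subcase $u_{\lambda}=0$ directly verifies stability and small change, while in the complementary subcase the bound $|\dashint_{\T_{3}}u\dif x|\leq\|u\|_{\lebe^{1}(\T_{3})}/\mathscr{L}^{3}(\T_{3})\leq 2\lambda$ supplies the $\lebe^{\infty}$-bound, with the remaining inequalities following from $\|u-\dashint u\|_{\lebe^{1}(\T_{3})}\leq 2\|u\|_{\lebe^{1}(\T_{3})}$.

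The main obstacle I anticipate is combinatorial rather than analytical: verifying that the simplex identity in Lemma~\ref{lem:ABprops}\ref{item:aux6} and the intricate integration-by-parts computations in Lemmas~\ref{lem:divfreelocal} and~\ref{lem:globaldivfree} indeed localise within a single coordinate chart of $\T_{3}$ under the sidelength restriction $\ell(Q_{j})<r_{0}/100$. With this verified, no further modification of the arguments of Section~\ref{sec:construction} is needed and the proof is complete.
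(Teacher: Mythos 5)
The paper's own proof of Proposition~5.1 is a one-liner, offering two routes: ``repeat the proof presented in Section~4'' or ``write $u$ as a $\mathbb{Z}^3$-periodic function on $\R^3$ and apply the obvious $\lebe^1_{\mathrm{loc}}$-version of Theorem~1.2.'' You pursue the first; the paper evidently regards the second as the cleaner default, and it does sidestep both of your technical worries at once. Indeed, for a $\mathbb{Z}^3$-periodic $u$ with bad set $\tilde{\mathcal{O}}_{\lambda}\subsetneq\R^3$ (again a periodic open set), the complement $\tilde{\mathcal{O}}_{\lambda}^{\complement}$ is $1$-dense in $\R^3$, so Whitney cubes have uniformly bounded sidelength by \ref{item:W2}; the construction of Section~4 then applies \emph{verbatim} in $\R^3$ (the restricted maximal function and the finitely many local estimates never use $u\in\lebe^1(\R^3)$, only $u\in\lebe^1_{\mathrm{loc}}$), and the resulting $T_{\lambda}u$ is again periodic once the Whitney cover and partition of unity are chosen periodically. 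No chart-localisation bookkeeping is needed.

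Within your chosen route there are two points that do not quite close as written. First, the sidelength cap $\ell(Q_j)<r_0/100$ conflicts with the Whitney property \ref{item:W2}: when $\mathcal{O}_\lambda\subsetneq\T_3$ contains a region far (of order $\mathrm{diam}\,\T_3$) from $\mathcal{O}_\lambda^\complement$, the natural Whitney cubes there exceed your cap, and subdividing them destroys the upper bound $\mathrm{dist}(Q_j,\mathcal{O}_\lambda^\complement)\le 5\ell(Q_j)$. That upper bound is exactly what Lemma~4.3 uses to find a nearby point $z_0\in\mathcal{O}_\lambda^\complement$; after capping, $\mathrm{dist}(Q_j,\mathcal{O}_\lambda^\complement)$ is controlled only by $\mathrm{diam}(\T_3)\lesssim r_0$, so the constant in \ref{item:W2} degrades to a fixed dimensional constant of order $\mathrm{diam}(\T_3)/(r_0/100)$. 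This is harmless for the downstream estimates, but you present it as a verification step rather than acknowledging that \ref{item:W2} as stated fails and must be replaced by a weakened version. Second, your degenerate case $\mathcal{O}_\lambda=\T_3$ does not close in the ``non-comparable'' subcase: from $\|u-\dashint u\|_{\lebe^1}\le 2\|u\|_{\lebe^1}\le 4\lambda$ you still need $\lambda\lesssim\int_{\{|u|>\lambda\}}|u|\,\dif x$, and weak-$(1,1)$ alone gives only $\lambda\lesssim\|u\|_{\lebe^1}$, which in this very subcase is dominated by $\int_{\{|u|\le\lambda\}}|u|$ and says nothing about $\int_{\{|u|>\lambda\}}|u|$. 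The clean fix is to run the whole argument at level $4\lambda$ (as Section~4.6 already does) and invoke Lemma~4.7: if $\mathcal{O}_{4\lambda}=\T_3$, then $4\lambda=4\lambda\,\mathscr{L}^3(\T_3)\le C\int_{\{|u|>2\lambda\}}|u|\,\dif x\le C\int_{\{|u|>\lambda\}}|u|\,\dif x$, after which $u_\lambda:=0$ verifies all four properties unconditionally and your case distinction becomes unnecessary.
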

To see this, one can either repeat the proof presented in Section \ref{sec:construction} or write $u \in \lebe^1(\T_3;\rdrei)$ as a $\mathbb{Z}^3$-periodic function on $\R^3$ and apply the obvious $\lebe^1_{\loc}$-version of Theorem~\ref{thm:main2}.
\begin{proof}[Proof of Theorem \ref{thm:main1}]
As $\mathscr{Q}_{\mathrm{sdqc}}f_1$ is a continuous symmetric div-quasiconvex function vanishing on $K$, all $y \in K^{(\infty)}$ are by definition also in $K^{(1)}$. It remains to show the other direction. Suppose that $\xi \in K^{(1)}$ and $(u_m)\subset \lebe^1(\T_3;\rdrei) \cap \mathcal{T}$ is a test sequence with
\begin{align} \label{eq:testsq}
    0= \mathscr{Q}_{\mathrm{sdqc}}f_{1}(\xi)= \lim_{m \to \infty } \int_{\T_3} f_{1}(\xi+u_m(x)) \dif x.
\end{align}
As $K$ is a compact set, we find $R>0$ with $K \subset \mathbb{B}_{R}(0)$ and $\xi \in \mathbb{B}_{R}(0)$. Thus, by \eqref{eq:testsq},
\begin{align} \label{conv}
    \lim_{m \to \infty} \int_{\{\vert u_m \vert >3R\}}   \vert u_m \vert \dif x =0.
\end{align}
Applying Proposition \ref{prop:versionofmain} gives a sequence $\widetilde{v}_m \in \lebe^{\infty}(\T_3;\rdrei)$, such that 
\begin{enumerate}
    \item\label{item:Finale1} $\di(\widetilde{v}_m) =0$.
    \item\label{item:Finale2} $\Vert \widetilde{v}_m - u_m \Vert_{\lebe^1 (\T_{3})} \rightarrow 0$ as $m \to \infty$.
    \item\label{item:Finale3} $\Vert \widetilde{v}_m \Vert_{\lebe^{\infty}(\T_{3})} \leq CR$.
\end{enumerate}
Mollification and subtracting the average gives a sequence $(v_m) \subset \lebe^{\infty}(\T_3;\rdrei) \cap \test$ also satisfying properties~\ref{item:Finale1}--\ref{item:Finale3}. Hence,
 \begin{align} \label{eq:testsq2}
    0= \mathscr{Q}_{\mathrm{sdqc}}f_{1}(\xi)= \lim_{m \to \infty } \int_{\T_3} f_{1}(\xi+v_m(x)) \dif x.
\end{align}
Take now a symmetric div-quasiconvex function $g \in \hold(\rdrei)$. We may suppose that $\max g(K) =0$ and, as $\max\,\{0,g\}$ is again symmetric div-quasiconvex, that $g\equiv 0$ on $K$. Using uniform boundedness of $v_m$ we may estimate with $C>0$ as in \ref{item:Finale3}
\begin{align}\label{eq:Finale4}
    \vert g(\xi +v_m(x)) \vert \leq \sup_{\eta \in \mathbb{B}_{(2C+1)R}(0)} \vert g(\eta) \vert < \infty.
\end{align}
Due to \eqref{eq:testsq2}, $\mathrm{dist}(\xi + v_m ,K)\to 0$ in measure, and by passing to a non-relabeled subsequence, we may assume that $\mathrm{dist}(\xi + v_m ,K)\to 0$ $\mathscr{L}^{3}$-a.e.. As $g$ is uniformly continuous on $\mathbb{B}_{(2C+1)R}(0)$, we get by \eqref{eq:Finale4} and dominated convergence
\begin{align}
    g(\xi) \leq \lim_{m \to \infty} \int_{\T_3} g(\xi + v_m(x)) \dif x \leq \int_{\T_3} \lim_{m \to \infty} g(\xi+v_m(x)) \dif x =0.
 \end{align}
Therefore, $\xi \in K^{(\infty)}$. The proof is complete. 
\end{proof}
Let us, for the sake of completeness, also discuss a proof of the statement $K^{(p)}=K^{(q)}$, $1<p,q<\infty$, which can be easily adapted to general  constant rank operators $\mA$ of the form \eqref{eq:Aform}. To this end, recall that a Borel measurable function $F\colon\R^{d}\to\R$ is called $\mA$-quasiconvex provided it satisfies \eqref{eq:defDivsymQC} for all $\xi\in\R^{d}$ and $\varphi\in\mathcal{T}$, where $\mathcal{T}=\mathcal{T}_{\mA}$ is now the set of all $\varphi\in\hold^{\infty}(\T_{n};\R^{d})$ with zero mean and $\mathscr{A}\varphi=0$. The $\mA$-quasiconvexifications $\mathscr{Q}_{\mA}f$ of functions $f$ and, for non-empty, compact sets $K\subset\R^{d}$, the corresponding sets $K^{(p)}$ for $1\leq p\leq\infty$ are defined as in \eqref{eq:DefEnvelope}, now  systematically replacing the divsym-quasiconvexity by $\mA$-quasiconvexity. In contrast to \cite{CMO19}, we even do not need to use potentials, but can directly appeal to Lemma~\ref{lem:proj}. Note that the construction of the projection $P_{\mA}$ from Lemma~\ref{lem:proj} crucially relies on Fourier multipliers and hence is not applicable for $p=1$ and $p=\infty$. Using this projection operator $P_{\mA}$, we can prove the following statement.
\begin{lemma}\label{lem:simple}
Let $\mA$ be a constant rank operator of the form \eqref{eq:Aform} and let $K \subset \R^d$ be compact. Then, for $1 < p< q <\infty$, $K^{(p)} = K^{(q)}$. 
\end{lemma}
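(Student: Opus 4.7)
The antimonotonicity remark preceding Theorem~\ref{thm:main1} already gives $K^{(q)} \subset K^{(p)}$, so the content of the lemma is the reverse inclusion $K^{(p)} \subset K^{(q)}$. My plan is to imitate the truncation argument of \textsc{Zhang} \cite{Zhang92,Zhang97} in the present $\mA$-free setting by directly truncating a recovery sequence at the level of the test maps and then restoring the differential constraint via the Fonseca--M\"{u}ller projection $P_{\mA}$ from Lemma~\ref{lem:proj}; no passage to potentials is required.

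Concretely, I would fix $\xi \in K^{(p)}$ and a recovery sequence $(u_{m}) \subset \test$ with $\int_{\T_{n}} f_{p}(\xi+u_{m})\,\dif x \to 0$. By compactness of $K$, pick $R>0$ with $K \cup \{\xi\} \subset \overline{B_{R}(0)}$; then on the set $\{|u_{m}| > 4R\}$ one has $\dist(\xi+u_{m},K) \geq \tfrac{1}{2}|u_{m}|$, whence
\begin{align*}
\int_{\{|u_{m}|>\lambda\}} |u_{m}|^{p}\,\dif x \leq 2^{p} \int_{\T_{n}} f_{p}(\xi+u_{m})\,\dif x \xrightarrow{m\to\infty} 0 \qquad\text{for every}\;\lambda \geq 4R.
\end{align*}
Fix such a $\lambda$ and put $\tilde{u}_{m} := u_{m}\mathbbm{1}_{\{|u_{m}| \leq \lambda\}} - \dashint_{\T_{n}} u_{m}\mathbbm{1}_{\{|u_{m}| \leq \lambda\}}\,\dif x$ as well as $v_{m} := P_{\mA}\tilde{u}_{m}$. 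Then $\tilde{u}_{m}$ has zero mean and $\|\tilde{u}_{m}\|_{\lebe^{\infty}(\T_{n})} \leq 2\lambda$ for large $m$, while by Lemma~\ref{lem:proj}~\ref{item:proj1}, $v_{m} \in \ker \mA$ with zero mean. Since $u_{m} \in \test$ has zero mean and $\mA u_{m}=0$, Lemma~\ref{lem:proj}~\ref{item:proj2} further gives $P_{\mA} u_{m} = u_{m}$, so the $\lebe^{p}$- and $\lebe^{q+1}$-boundedness of $P_{\mA}$ combine with the zero-mean bookkeeping $\tilde u_{m}-u_{m} = -u_{m}\mathbbm{1}_{\{|u_{m}|>\lambda\}} + \dashint_{\T_{n}} u_{m}\mathbbm{1}_{\{|u_{m}|>\lambda\}}\dif x$ to yield the stability
\begin{align*}
\|v_{m}-u_{m}\|_{\lebe^{p}(\T_{n})} = \|P_{\mA}(\tilde{u}_{m}-u_{m})\|_{\lebe^{p}(\T_{n})} \leq 2C_{p}\|u_{m}\mathbbm{1}_{\{|u_{m}|>\lambda\}}\|_{\lebe^{p}(\T_{n})} \to 0
\end{align*}
together with the uniform bound $\|v_{m}\|_{\lebe^{q+1}(\T_{n})} \leq 2\lambda\, C_{q+1}$.

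From here the argument closes by interpolation. Since $\dist(\cdot, K)$ is $1$-Lipschitz, $\dist(\xi+v_{m},K) \to 0$ in $\lebe^{p}(\T_{n})$, while $\dist(\xi+v_{m},K) \leq |v_{m}|+2R$ stays bounded in $\lebe^{q+1}(\T_{n})$. Interpolation between $\lebe^{p}$ and $\lebe^{q+1}$ gives $\dist(\xi+v_{m},K) \to 0$ in $\lebe^{q}(\T_{n})$, i.e., $\int_{\T_{n}} f_{q}(\xi+v_{m})\,\dif x \to 0$. A standard mollification $v_{m}\ast \rho_{\varepsilon_{m}} \in \test$ with $\varepsilon_{m}\to 0$ chosen sufficiently small preserves this convergence by continuity of $f_{q}$ on compact sets and dominated convergence, establishing $\xi \in K^{(q)}$.

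The principal obstacle lies in the fact that the truncated map $\tilde{u}_{m}$ is no longer $\mA$-free, so the projection step is indispensable; and since $P_{\mA}$ does \emph{not} preserve $\lebe^{\infty}$ in general, no uniform $\lebe^{\infty}$-bound on $v_{m}$ is available, only an $\lebe^{q+1}$-bound. For $p>1$ this is precisely what an interpolation argument can absorb, but at $p=1$ the projection $P_{\mA}$ is unbounded on $\lebe^{1}$ by Ornstein's non-inequality; this is exactly the reason why the endpoint case forces the delicate geometric truncation developed in Section~\ref{sec:construction}.
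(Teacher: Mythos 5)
Your proof is correct and the core mechanism is the same as the paper's: first note the inclusion $K^{(q)}\subset K^{(p)}$, then for the reverse inclusion pass from a $p$-recovery sequence to $\mA$-free fields by cutting off at a level $\lambda\sim R$ and projecting with $P_{\mA}$. The difference lies in how you close the argument at the $L^{q}$-level. The paper invokes Lemma~\ref{lem:proj}~\ref{item:proj3}: since $(\widetilde{u}_{m})$ is bounded in $L^{\infty}$ it is $q$-equiintegrable, and $P_{\mA}$ preserves $q$-equiintegrability, so together with convergence in measure of $f_{q}(y+v_{m})$ to zero and $L^{q}$-boundedness the Vitali convergence theorem gives $\int f_{q}(y+v_{m})\to 0$. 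You instead use the strong $L^{p}$- and $L^{q+1}$-bounds of $P_{\mA}$ and Lebesgue interpolation between $L^{p}$ and $L^{q+1}$ to obtain strong $L^{q}$-convergence of $\dist(\xi+v_{m},K)$ directly. This replaces the equiintegrability clause of Lemma~\ref{lem:proj} with the more elementary fact that $P_{\mA}$ is strongly bounded on any $L^{r}$ with $1<r<\infty$, which is arguably cleaner. Your version also makes explicit the final mollification needed to return to $\test_{\mA}$ (the paper's proof omits this step), though I would phrase that step via the $1$-Lipschitz property of $\dist(\cdot,K)$ and strong $L^{q}$-convergence rather than dominated convergence, since $v_{m}$ need not be $L^{\infty}$. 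Minor point: the paper re-derives the antimonotonicity $K^{(q)}\subset K^{(p)}$ inside the proof using $f_{p}\leq\varepsilon+C_{\varepsilon}f_{q}$ and $L^{p}\hookleftarrow L^{q}$ boundedness of the recovery sequence; you appeal to the remark preceding Theorem~\ref{thm:main1}, which is fine as the remark is exactly this fact.
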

\begin{proof} With slight abuse of notation, let $K \subset \mathbb{B}_{R}(0):=\{\eta\in\R^{d}\colon\;|\eta|<R\}$ and $y \in \mathbb{B}_{R}(0)$. Ad '$K^{(q)} \subset K^{(p)}$'. Let $y \in K^{(q)}$ and let $(u_m)\subset  \test_{\mA}$ be a test sequence such that \begin{align*}
    0= \mathscr{Q}_{\mA}f_{q}(y) = \lim_{m \to \infty} \int_{\T_n} f_{q}(y+u_m(x)) \dif x.
\end{align*}
As $K$ is compact, $(u_m)$ is bounded in $\lebe^q (\T_{n};\R^{d})$ and, as $q >p$, also bounded in $\lebe^p(\T_{n};\R^{d})$. Also note that for any $\varepsilon >0$, there is $C_{\varepsilon}>0$ such that $f_p \leq \varepsilon + C_{\varepsilon} f_q$. Therefore, \begin{align*}
\mathscr{Q}_{\mathrm{sdqc}}f_{p}(y) \leq \lim_{m \to \infty} \int_{\T_n} f_{p}(y+u_m(x)) \dif x \leq \lim_{m \to \infty} \int_{\T_n} \varepsilon + C_{\varepsilon} f_{q}(y+u_m(x)) \dif x \leq \varepsilon.
\end{align*}
Thus, $y \in K^{(p)}$. The direction $K^{(p)} \subset K^{(q)}$ uses a similar, yet easier truncation statement than Theorem \ref{thm:main1}.  Let $y \in K^{(p)}$ and let $(u_m)\subset\test_{\mA}$ be a test sequence, such that \begin{align*}
    0= \mathscr{Q}_{\mathrm{sdqc}}f_{p}(y) = \lim_{m \to \infty} \int_{\T_n} f_{p}(y+u_m(x)) \dif x.
\end{align*}
Note that $(u_m)$ is uniformly bounded in $\lebe^p(\T_{n};\R^{d})$ and that \[
\lim_{m \to \infty} \int_{\T_n} \mathrm{dist}^{p}(u_m(x),\mathbb{B}_{2R}(0)) \dif x=0.
\]
Write 
\begin{align*}
\widetilde{u}_m= \mathbbm{1}_{\{\vert u_m \vert \leq 2R\}} u_m - \dashint_{\T_n}   \mathbbm{1}_{\{\vert u_m \vert \leq 2R\}}(x) u_m(x) \dif x
\end{align*} and define $v_m := P_{\mA} \widetilde{u}_m$ with the projection operator $P_{\mA}$ from Lemma~\ref{lem:proj}. Observe that \begin{enumerate}
    \item $\mA v_m =0$ by Lemma~\ref{lem:proj}~\ref{item:proj1}.
    \item $(\widetilde{u}_m)$ is bounded in $\lebe^{\infty}(\T_{n};\R^{d})$ and $q$-equiintegrable. Since $1<q<\infty$, the projection $P_{\mA}\colon\lebe^{q}(\T_{n};\R^{d})\to\lebe^{q}(\T_{n};\R^{d})$ is bounded, $(v_m)$ is bounded in $\lebe^q(\T_{n};\R^{d})$, $q$-equiintegrable by Lemma~\ref{lem:proj}~\ref{item:proj3}, Moreover, by Lemma~\ref{lem:proj}~\ref{item:proj2} and $1<p<\infty$, 
\begin{align*}
        \Vert u_m - v_m \Vert_{\lebe^p (\T_{n})} & \leq \Vert u_m - \widetilde{u}_m \Vert_{\lebe^p (\T_{n})} + \Vert \widetilde{u}_m - v_m \Vert_{\lebe^p (\T_{n})} \\ & \leq \Vert u_m - \widetilde{u}_m \Vert_{\lebe^p (\T_{n})} + C_{\mA,p}\Vert \mA(\widetilde{u}_m -u_{m})\Vert_{\sobo^{-k,p}(\T_{n})} \\ 
        & \leq C_{\mA,p}\Vert u_m - \widetilde{u}_m \Vert_{\lebe^p (\T_{n})} \to 0.
    \end{align*}
\end{enumerate}
Hence, also \begin{align*}
    \lim_{m \to \infty} \int_{\T_n} f_{p}(y+v_m(x)) \dif x =0.
\end{align*}
We conclude that $f_{q}(y+v_m) \to 0$ in measure. Combining this with the $\lebe^q$-boundedness and $q$-equiintegrability, we obtain
\begin{align*}
    \lim_{m \to \infty} \int_{\T_n} f_{q}(y+v_m(x)) \dif x =0.
\end{align*}
Therefore, $y \in K^{(q)}$, concluding the proof.
\end{proof}

\section{Potential truncations}\label{sec:revisit}
In this concluding section, we come back to the potential truncations alluded to in the introduction and discuss the limitations of this strategy in view of Theorems~\ref{thm:main1} and \ref{thm:main2}. Let $\mA$ be a constant rank operator in the sense of Section~\ref{sec:HA}. Recall that the potential truncation strategy, originally pursued in \cite{BDS13} for $\mA=\mathrm{div}$, is to represent $u\in\lebe^{p}(\T_{n};\R^{d})$ with $\mA u=0$ and $\dashint_{\T_{n}}u\dif x =0$ as $u=\mathbb{A}v$ for some potential $\mathbb{A}$ of order $l\in\mathbb{N}$ (cf. Lemma~\ref{lem:potential}) and then performing a $\sobo^{l,p}$-$\sobo^{l,\infty}$-truncation on the potential $v$. We then write with slight abuse of notation\footnote{The notation $\mathbb{A}^{-1}$ is only symbolic as $\mathbb{A}$ might be non-invertible.} $v=\mathbb{A}^{-1}u$. Since it is of independent interest but also motivates the need for a different strategy for Theorem~\ref{thm:main2} for $p=1$, we record 
\begin{proposition} \label{prop:modtrunc}
Let $\mA$ be a constant rank differential operator of order $k \in \N$ and $\mathbb{A}$ be a potential of $\mA$ of order $l \in \N$. Let $1<p< \infty$. Then there exists a constant $C>0$ such that the following hold: If $u \in \lebe^{p}(\T_n;\R^d) \cap \ker \mA$ and $\lambda>0$
then there exists $u_\lambda \in \lebe^{\infty}(\T_n;\R^d) \cap \ker \mA$ satisfying the
\begin{enumerate}
    \item\label{item:modtrunc1} \emph{$\lebe^{\infty}$-bound:} $\Vert u_\lambda \Vert_{\lebe^{\infty}(\T_n)} \leq C\lambda$. 
    \item\label{item:modtrunc2} \emph{weak stability:} \begin{align*}
    \Vert u_\lambda - u \Vert_{\lebe^{p}(\T_n)}^{p} \leq C \int_{\{ \sum_{j=0}^{l}|\nabla^{j} \circ \mathbb{A}^{-1}u| >\lambda \}} \sum_{j=0}^{l} \vert \nabla^j \circ \mathbb{A}^{-1} u \vert^{p} \dif x .
    \end{align*}
    \item\label{item:modtrunc3} \emph{small change:}
    \begin{align*}
        \mathscr{L}^n (\{ u_\lambda \neq u \}) \leq \frac{C}{\lambda^{p}} \int_{\{ \sum_{j=0}^{l}|\nabla^{j} \circ \mathbb{A}^{-1}u| >\lambda \}} \sum_{j=0}^{l} \vert \nabla^j \circ \mathbb{A}^{-1} u \vert^{p} \dif x
    \end{align*}
\end{enumerate}
\end{proposition}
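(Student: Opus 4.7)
The plan is to lift the problem to the potential level and then apply a standard higher-order Whitney truncation. After subtracting the mean, I may assume $\dashint_{\T_n}u\dif x=0$, as a constant automatically lies in $\ker\mA$. By Lemma~\ref{lem:potential}, together with density arguments, there is a mean-zero $v\in\sobo^{l,p}(\T_{n};\R^{\ell})$ with $u=\mathbb{A}v$. To make $v$ unique I would canonically select $v$ via the Moore--Penrose pseudoinverse of the Fourier symbol $\mathbb{A}[\xi]$, which is well-defined and smooth on $\R^{n}\setminus\{0\}$ because $\mathbb{A}$ has constant rank. Denoting the resulting operator $\mathbb{A}^{-1}$, its symbol is homogeneous of degree $-l$ and satisfies Mihlin's multiplier condition, whence $\sum_{j=0}^{l}\Vert\nabla^{j}v\Vert_{\lebe^{p}}\leq C\Vert u\Vert_{\lebe^{p}}$ for $1<p<\infty$.

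Next, I would apply a higher-order $\sobo^{l,p}$-$\sobo^{l,\infty}$-truncation to $v$ in the spirit of \textsc{Acerbi \& Fusco} \cite{AcFu88} and \textsc{Zhang} \cite{Zhang92}. Put $g:=\sum_{j=0}^{l}|\nabla^{j}v|$ and define the bad set $\mathcal{O}_{\lambda}:=\{\mathcal{M}g>\lambda\}$, which is open. Using the Zhang trick \eqref{eq:zhangtrick} with $h:=(g-\lambda/2)_{+}$ together with the $\lebe^{p}$-boundedness of $\mathcal{M}$ and Chebyshev's inequality, one obtains $\mathscr{L}^{n}(\mathcal{O}_{\lambda})\leq C\lambda^{-p}\int_{\{g>\lambda/2\}}g^{p}\dif x$. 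On the Whitney cover $(Q_{j})$ of $\mathcal{O}_{\lambda}$ from Section~\ref{sec:maxop}, I would redefine $v$ on $\mathcal{O}_{\lambda}$ as $\sum_{j}\varphi_{j}P_{j}(v)$, where $P_{j}(v)$ is the Taylor polynomial of $v$ of degree $l-1$ centered at a suitably chosen good point close to $Q_{j}$ in $\mathcal{O}_{\lambda}^{\complement}$. Standard Poincar\'{e}--Sobolev estimates on Whitney cubes combined with \ref{item:P3} then yield $v_{\lambda}$ satisfying
\begin{align*}
\textstyle\sum_{j=0}^{l}\Vert\nabla^{j}v_{\lambda}\Vert_{\lebe^{\infty}}\leq C\lambda, \qquad \{v\neq v_{\lambda}\}\subset\mathcal{O}_{\lambda}, \qquad \Vert v-v_{\lambda}\Vert_{\sobo^{l,p}}^{p}\leq C\int_{\mathcal{O}_{\lambda}}g^{p}\dif x.
\end{align*}

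Finally, set $u_{\lambda}:=\mathbb{A}v_{\lambda}$. The differential constraint $\mA u_{\lambda}=\mA\mathbb{A}v_{\lambda}=0$ is immediate from exactness of the Fourier symbol sequence in Lemma~\ref{lem:potential}, so $u_{\lambda}\in\ker\mA$. Since $\mathbb{A}$ is a homogeneous operator of order $l$, $\Vert u_{\lambda}\Vert_{\lebe^{\infty}}\leq C\Vert\nabla^{l}v_{\lambda}\Vert_{\lebe^{\infty}}\leq C\lambda$ and $\Vert u-u_{\lambda}\Vert_{\lebe^{p}}\leq C\Vert v-v_{\lambda}\Vert_{\sobo^{l,p}}$, giving \ref{item:modtrunc1} and \ref{item:modtrunc2}. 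The inclusion $\{u\neq u_{\lambda}\}\subset\{v\neq v_{\lambda}\}\subset\mathcal{O}_{\lambda}$ combined with the measure bound above yields \ref{item:modtrunc3}. The appearance of $\lambda/2$ rather than $\lambda$ in the domain of integration is absorbed into the constant $C$, e.g.\ by running the whole argument at level $2\lambda$ and re-labelling.

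The main obstacle I expect is the careful verification of the higher-order Whitney truncation $v\mapsto v_{\lambda}$ delivering all three bounds simultaneously; while every ingredient is standard \cite{AcFu88,Zhang92}, the bookkeeping of the derivatives via \ref{item:P3} requires some effort. More fundamentally, the entire strategy rests on the $\lebe^{p}\to\sobo^{l,p}$-boundedness of $\mathbb{A}^{-1}$, which holds for $1<p<\infty$ via Mihlin but irrevocably fails at $p=1$ (and $p=\infty$) by virtue of \textsc{Ornstein}'s non-inequality \cite{Ornstein}. This is precisely why potential truncation cannot deliver the strong $\lebe^{1}$-stability required for Theorem~\ref{thm:main2}, motivating the bespoke geometric construction in Section~\ref{sec:construction}.
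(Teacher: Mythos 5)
Your proposal is correct and follows essentially the same route as the paper: pass to the potential $v=\mathbb{A}^{-1}u$ via Mihlin's theorem (valid since $1<p<\infty$), perform a higher-order Whitney-type $\sobo^{l,p}$-$\sobo^{l,\infty}$-truncation on $v$, and set $u_\lambda=\mathbb{A}v_\lambda$. The one cosmetic difference is your choice of pointwise Taylor polynomials centered at a good point in $\mathcal{O}_\lambda^{\complement}$, whereas the paper instead uses averaged Taylor polynomials over the Whitney cubes (\`a la Maz'ya); the latter sidesteps the need to select Lebesgue points for $\sobo^{l,p}$-functions and makes the comparability of neighbouring polynomials via Poincar\'{e}'s inequality cleaner, but both are standard and deliver the same estimates.
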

For simplicity, we state this result on $\T_{n}$; a version on $\R^{n}$ follows by analogous means.
\begin{proof}
We start by outlining the $\sobo^{m,p}$-$\sobo^{m,\infty}$-truncation that seems hard to be traced in the literature; here, we choose a direct approach instead of appealing to \textsc{McShane}-type extensions. Let $m\in\mathbb{N}$. For $v\in\sobo^{m,p}(\T_{n};\R^{d})$, let $\mathcal{O}_{\lambda}:=\{\sum_{j=0}^{m}\mathcal{M}(\nabla^{j}v)>\lambda\}$. Since the sum of lower semicontinuous functions is lower semicontinuous, $\mathcal{O}_{\lambda}$ is open. We choose a Whitney decomposition $\mathscr{W}=(Q_{j})$ of $\mathcal{O}_{\lambda}$ satisfying \ref{item:W1}--\ref{item:W4}, and a partition of unity $(\varphi_{j})$ subject to $\mathscr{W}$ with \ref{item:P1}--\ref{item:P3}. We note that the Whitney cover can be arranged in a way such that $\mathscr{L}^{n}(Q_{j}\cap Q_{j'})\geq c\max\{\mathscr{L}^{n}(Q_{j}),\mathscr{L}^{n}(Q_{j'})\}$ holds for some $c=c(n)>0$ and all $j,j'\in\mathbb{N}$ such that $Q_{j}\cap Q_{j'}\neq\emptyset$. For each $j\in\mathbb{N}$, we then denote $\pi_{j}[v]$ the $(m-1)$-th order averaged Taylor polynomial of $v$ over $Q_{j}$; cf. \cite[Chpt.~1.1.10]{Mazya}. In particular, we have the scaled version of Poincar\'{e}'s inequality
\begin{align}\label{eq:scaledPoinca}
\dashint_{Q_{j}}|\partial^{\alpha}(w-\pi_{j}[w])|^{q}\dif x \leq c(q,m,n)\ell(Q_{j})^{q(m-|\alpha|)}\dashint_{Q_{j}}|\nabla^{m}w|^{q}\dif x
\end{align}
for all $1\leq q <\infty$, $w\in\sobo^{m,q}(\T_{n};\R^{d})$ and $|\alpha|\leq m$. We then put 
\begin{align}\label{eq:modsum}
v_{\lambda}:=v-\sum_{j}\varphi_{j}(v-\pi_{j}[v])=\begin{cases} 
v&\;\text{in}\;\mathcal{O}_{\lambda}^{\complement},\\
\sum_{j}\varphi_{j}\pi_{j}[v]&\;\text{in}\;\mathcal{O}_{\lambda}. 
\end{cases}
\end{align}
Then $v_{\lambda}\in\sobo^{m,p}(\T_{n};\R^{d})$, which can be seen as follows: On $\mathcal{O}_{\lambda}$, $v_{\lambda}$ is a locally finite sum of $\hold^{\infty}$-maps and hence of class $\hold^{\infty}$ too. For an arbitrary $|\alpha|\leq m$, \eqref{eq:scaledPoinca} yields
\begin{align*}
\sum_{j}\|\partial^{\alpha}(\varphi_{j}(v-\pi_{j}[v])\|_{\lebe^{q}(\mathcal{O}_{\lambda})}^{q}  &\stackrel{\text{\ref{item:P3}}}{\leq} \sum_{j}\sum_{\beta+\gamma=\alpha}\frac{c(n,q)}{\ell(Q_{j})^{q(|\beta|+|\gamma|)}}\ell(Q_{j})^{q|\gamma|}\|\partial^{\gamma}(v-\pi_{j}[v])\|_{\lebe^{q}(Q_{j})}^{q} \\ & \;\,\leq c(n,m,q)\sum_{j}\ell(Q_{j})^{q(m-|\alpha|)}\|\nabla^{m}v\|_{\lebe^{q}(Q_{j})}^{q}\\ 
& \stackrel{\text{\ref{item:W3}}}{\leq} c(n,m,q)\mathscr{L}^{n}(\mathcal{O}_{\lambda})^{\frac{q(m-|\alpha|)}{n}}\|\nabla^{m}v\|_{\lebe^{q}(\mathcal{O}_{\lambda})}^{q}. 
\end{align*}
In conclusion, applying the previous inequality with $q=1$, on $(0,1)^{n}$ the series in \eqref{eq:modsum} converges absolutely in $\sobo_{0}^{m,1}((0,1)^{n};\R^{d})$ and hence $v_{\lambda}\in\sobo^{m,1}(\T_{n};\R^{d})$; then applying the previous inequality with $q=p$ yields $v_{\lambda}\in\sobo^{m,p}(\T_{n};\R^{d})$. Whenever $x\in Q_{j_{0}}$ for some $j_{0}\in\mathbb{N}$, \ref{item:W2} implies that we may blow up $Q_{j_{0}}$ by a fixed factor $c>0$ so that $cQ_{j_{0}}\cap\mathcal{O}_{\lambda}^{\complement}\neq\emptyset$. Fix some $z\in cQ_{j_{0}}\cap\mathcal{O}_{\lambda}^{\complement}$. Then, for some $c'=c'(n)>0$, $Q_{j_{0}}\subset\ball_{c'\ell(Q_{j_{0}})}(z)$ and so
\begin{align}\label{eq:steinmeier}
\dashint_{Q_{j_{0}}}|\partial^{\alpha}v|\dif x \leq c(n)\dashint_{\ball_{c'(n)\ell(Q_{j_{0}})}(z)}|\partial^{\alpha}v|\dif x \leq c(n)\mathcal{M}(\nabla^{|\alpha|}v)(z)\leq c(n)\lambda
\end{align}
for all $|\alpha|\leq m$. Now let $Q_{j}\in\mathscr{W}$ be another cube with $Q_{j}\cap Q_{j_{0}}\neq\emptyset$; by \ref{item:W3}, there are only $\mathtt{N}=\mathtt{N}(n)<\infty$ many such cubes. Since $\nabla^{m}\pi_{j_{0}}[v]=0$ and $\sum_{j}\varphi_{j}=1$ on $\mathcal{O}_{\lambda}$,
\begin{align}\label{eq:stab1}
\begin{split}
|\nabla^{m}&v_{\lambda}(x)| \leq \left\vert \sum_{j\colon\;Q_{j}\cap Q_{j_{0}}\neq\emptyset}\nabla^{m}(\varphi_{j}(\pi_{j}[v]-\pi_{j_{0}}[v]))(x) \right\vert \\ 
& \!\!\stackrel{\text{\ref{item:P3}}}{\leq} c\sum_{\substack{j\colon\;Q_{j}\cap Q_{j_{0}}\neq\emptyset\\ |\alpha|+|\beta|=m}}\frac{1}{\ell(Q_{j})^{|\alpha|}}\|\nabla^{|\beta|}(\pi_{j}[v]-\pi_{j_{0}}[v])\|_{\lebe^{\infty}(Q_{j}\cap Q_{j_{0}})} \\ 
& \!\stackrel{(*)}{\leq} c\sum_{\substack{j\colon\;Q_{j}\cap Q_{j_{0}}\neq\emptyset\\ |\alpha|+|\beta|=m}}\frac{1}{\ell(Q_{j})^{|\alpha|}}\dashint_{Q_{j}}|\nabla^{|\beta|}(\pi_{j}[v]-v)|\dif x + \dashint_{Q_{j_{0}}}|\nabla^{|\beta|}(v-\pi_{j_{0}}[v])|\dif x \\ 
& \leq c\sum_{\substack{j\colon\;Q_{j}\cap Q_{j_{0}}\neq\emptyset}}\dashint_{Q_{j}}|\nabla^{m}v|\dif x \;\;\;\;\text{(by~\eqref{eq:scaledPoinca})} \\ & \leq c\lambda \;\;\;\;\;\;\;\;\;\;\;\;\;\;\;\;\;\;\;\;\;\;\;\;\;\;\;\;\;\;\;\;\;\;\;\;\;\;\;\;\;\;\,\text{(by \eqref{eq:steinmeier} and \ref{item:W3})},
\end{split}
\end{align}
where have used at $(*)$ that on the polynomials of degree at most $(m-1)$ on cubes, all norms are equivalent (in particular, the $\lebe^{1}$- and $\lebe^{\infty}$-norms), and scaling (recall that $\mathscr{L}^{n}(Q_{j}\cap Q_{j_{0}})\geq c\max\{\mathscr{L}^{n}(Q_{j}),\mathscr{L}^{n}(Q_{j_{0}})\}$) whenever $Q_{j}\cap Q_{j_{0}}\neq\emptyset$, and \ref{item:W3}). Hence, 
\begin{itemize}
\item[(i)] $\|\nabla^{m}v\|_{\lebe^{\infty}(\T_{n})}\leq c(m,n)\lambda$, 
\item[(ii)] $\mathscr{L}^{n}(\{u\neq u_{\lambda}\})\leq\frac{c(m,n,p)}{\lambda^{p}}\sum_{j=0}^{m}\|\nabla^{j}v\|_{\lebe^{p}(\T_{n})}^{p}$. 
\end{itemize}
We now let $u\in\lebe^{p}(\T_{n};\R^{d})\cap\ker\mA $ satisfy $\int_{(0,1)}u\dif x = 0$. Since $\mathbb{A}^{-1}$ has a Fourier symbol of class $\hold^{\infty}$ off zero and homogeneous of degree $(-l)$, $\nabla^{l}\circ\mathbb{A}^{-1}$ has a Fourier symbol of class $\hold^{\infty}$ off zero and  homogeneous of degree zero. By Mihlin's theorem (cf.~\cite{Stein}), applicable because of $1<p<\infty$ and by Poincar\'{e}'s inequality, we thus find that $\mathbb{A}^{-1}u\in\sobo^{l,p}(\T_{n})$ together with $\|\mathbb{A}^{-1}u\|_{\sobo^{l,p}(\T_{n})}\leq c\|u\|_{\lebe^{p}(\T_{n})}$. We then perform a $\sobo^{l,p}$-$\sobo^{l,\infty}$-truncation on $\mathbb{A}^{-1}u$ as in the first part of the proof,  and define $u_{\lambda}:=\mathbb{A}v_{\lambda}$. By the properties gathered in the first part of the proof,  we may employ \textsc{Zhang}'s trick (see \eqref{eq:zhangtrick} ff.) to conclude \ref{item:modtrunc2} and \ref{item:modtrunc3} as well. The proof is complete. 
\end{proof}
\begin{remarkise}[Strong stability and $1<p<\infty$ versus $p=1$]\label{rem:nonnatural}
It is clear from the above proof that the potential truncation only works fruitfully in the case $1<p<\infty$ by the entering of Mihlin's theorem; indeed, the operator $\mathbb{A}^{-1}$ is defined via Fourier multipliers and by Ornstein's Non-Inequality, we cannot conclude that $\mathbb{A}^{-1}u\in\sobo^{l,1}$ provided $u\in\lebe^{1}$. However, the potential truncations from Proposition~\ref{prop:modtrunc} do \emph{not satisfy the strong stability property} $\|u-u_{\lambda}\|_{\lebe^{p}(\T_{n})}^{p}\leq C\int_{\{|u|>\lambda\}}|u|^{p}\dif x$.  The underlying reason is that $\nabla^{l}\circ\mathbb{A}^{-1}$ is a Fourier multiplication operator with symbol smooth off zero and homogeneous of degree zero; by Ornstein's Non-Inequality, we only have that $\nabla^{l}\circ\mathbb{A}^{-1}\colon\lebe^{\infty}\to\mathrm{BMO}$ in general, and here $\mathrm{BMO}$ \emph{cannot} be replaced by $\lebe^{\infty}$. The potential truncation is performed on the sets where $\sum_{j=0}^{l}\mathcal{M}(\nabla^{j}\circ\mathbb{A}^{-1}u)>\lambda$.
Thus, even if $u\in\lebe^{\infty}(\T_{n};\R^{d})$ is $\mA$-free with $\|u\|_{\lebe^{\infty}(\T_{n})}\leq \lambda$, the potential truncation might modify $u$ regardless of $\lambda>0$ and hence strong stability cannot be achieved. As established by \textsc{Conti, M\"{u}ller and Ortiz} \cite{CMO19}, in the case $1<p<\infty$ this issue still can be circumvented to arrive at Lemma~\ref{lem:simple}, but in the context of $p=1$ the underlying techniques break down. In essence, this was the original motivation for the different proof displayed in Sections~\ref{sec:consttrunc} and \ref{sec:construction}.
\end{remarkise}
We conclude the paper with possible other approaches and extensions of Theorem~\ref{thm:main2}. 
\begin{remarkise}\label{rem:BDF}
As mentioned in the introduction, \cite{BDF12} constructs a divergence-free $\sobo^{1,p}$-$\sobo^{1,\infty}$-truncation. Here a Whitney-type truncation is performed first, leading to a non-divergence-free truncation. To arrive at a divergence-free truncation, the local divergence overshoots are then corrected by substracting special solutions of suitable divergence equations. This is achieved by invoking the \emph{Bogovski\u{\i}} operator. In our situation, the main drawback of the Bogovski\u{\i} operator is that if equations $\mathrm{div}(Y)=f$ for $f\colon (0,1)^{n}\to\R^{n}$ are considered, then the solution $Y$ obtained by the Bogovski\u{\i} operator does not necessarily take values in $\R_{\mathrm{sym}}^{n\times n}$; note that passing to the symmetric part $Y^{\mathrm{sym}}$ destroys the validity of the divergence equation. While this, in principle, could be repaired by passing to different solution operators, the method requires tools that are not fully clear to us in the present lower regularity context of Theorem~\ref{thm:main2}. With our proof in Section~\ref{sec:construction} being taylored to divergence constraints, in principle it can be modified to yield divergence-free $\sobo^{1,p}$-$\sobo^{1,\infty}$-truncations as well. We shall pursue this together with possible extensions of the approach in \cite{BDF12} elsewhere. 
\end{remarkise}
We finally comment on possible extensions of the strategy explained in Section~\ref{sec:consttrunc} in the $\mA$-free context. As discussed in Section~\ref{sec:consttrunc}, the key ingredients for the underlying construction is the availability of a $\sobo^{\mathbb{A},1}$-$\sobo^{\mathbb{A},\infty}$-truncation for a suitable operator $\mathbb{A}$ and the analogue of \eqref{schweinsteiger}. Since for the class of $\mathbb{C}$-elliptic operators\footnote{This means that $\mathbb{A}[\xi]$ has trivial nullspace for each $\xi\in\mathbb{C}^{n}\setminus\{0\}$, cf. \textsc{Smith} \cite{Smith70}.}, such truncations are available \cite{Behn20} (see \cite{GMR19} for a similar strategy in view of extension operators), this should then give truncations along the whole exact sequence starting with $\mathbb{A}$. As a consequence, we expect Theorem \ref{thm:main2} to hold true for all operators with constant rank in $\C$:
\begin{conj}[Theorem~\ref{thm:main2} for operators with constant rank in $\C$] \label{speculations}
Let \begin{align*}
    0 \rightarrow \hold^{\infty,0}(\T_n;\R^{d_0}) \xrightarrow{\mA_1} \hold^{\infty,0}(\T_n;\R^{d_1}) \xrightarrow{\mA_2} ... \xrightarrow{\mA_k} \hold^{\infty,0}(\T_n;\R^{d_k}) \xrightarrow{\mA_{k+1}} ...
\end{align*}
be an exact sequence of differential operators with constant rank in $\C$, in particular, $\mA_1$ being $\C$-elliptic. This is equivalent to \begin{align*}
    0 \rightarrow \C^{d_0} \xrightarrow{\mA_1[\xi]}  \C^{d_1} \xrightarrow{\mA_2[\xi]} \C^{d_2} \xrightarrow{\mA_3[\xi]} ... \xrightarrow{\mA_k[\xi]} \C^{d_k} \xrightarrow{\mA_{k+1}[\xi]} ...
\end{align*}
being exact for all $\xi \in \C^n \setminus \{0\}$. Then for any differential operator $\mA_k$ contained in this exact sequence there is $C_k >0$, such that for $u \in \lebe^1(\T_{n};\R^{d_k})$ with $\mA_k u =0$ in $\mathscr{D}'(\T_n;\R^{d_{k+1}})$ and $\lambda>0$, there is $u_{\lambda} \in \lebe^1(\R^n;\R^{d_k})$ satisfying \begin{enumerate}
\item $\Vert u_{\lambda} \Vert_{\lebe^{\infty}} \leq C \lambda$. ($\lebe^{\infty}$-bound)
\item $\Vert u - u_{\lambda} \Vert_{\lebe^1} \leq C \int_{{\vert u \vert > \lambda}} \vert u \vert \dif x $. (Strong stability)
\item \label{property:bonus}$\mathscr{L}^{n} (\{ u \neq u_{\lambda} \}  \leq C \lambda^{-1} \int_{{\vert u \vert > \lambda}} \vert u \vert \dif x$. (Small change) 
\item $\mA_k u_{\lambda} = 0$, i.e. the differential constraint is still satisfied.
\end{enumerate}
If any differential operator $\mA$ with constant rank over $\C$ is a part of such an exact sequence, this means that the $\mA$-free truncation is possible for every such operator. 
\end{conj}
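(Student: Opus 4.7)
The plan is to prove the conjecture by induction along the exact sequence, bootstrapping from the Sobolev truncation for the $\C$-elliptic head $\mA_{1}$ due to \textsc{Behn}~\cite{Behn20} and propagating it one slot further at each step. The scheme mimics Sections~\ref{sec:consttrunc}--\ref{sec:construction}: one starts from a $\sobo^{\varepsilon,1}$-$\sobo^{\varepsilon,\infty}$-truncation (the $\C$-elliptic base case $\mA_{1}=\varepsilon$), uses it to build a $\curl\curl^{\top}$-free $\lebe^{1}$-$\lebe^{\infty}$-truncation which simultaneously turns out to be a $\sobo^{\curl\curl^{\top},1}$-$\sobo^{\curl\curl^{\top},\infty}$-truncation, and from this extracts the $\di$-free truncation of Theorem~\ref{thm:main2}.

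The inductive step to be established reads: assuming a $\sobo^{\mA_{k-1},1}$-$\sobo^{\mA_{k-1},\infty}$-truncation, construct (i) an $\mA_{k+1}$-free $\lebe^{1}$-$\lebe^{\infty}$-truncation of every $u\in\lebe^{1}(\T_n;\R^{d_{k}})\cap\ker\mA_{k+1}$, and (ii) upgrade it to a $\sobo^{\mA_{k},1}$-$\sobo^{\mA_{k},\infty}$-truncation so as to feed the next round. For (i) the starting point is that, by exactness and Lemma~\ref{lem:potential}, any such $u$ may locally be written as $u=\mA_{k}v$; applying the Sobolev truncation to $v$ yields $v_{\lambda}$ with $\Vert \mA_{k}v_{\lambda}\Vert_{\lebe^{\infty}}\leq c\lambda$ and $v_{\lambda}=v$ on a large good set. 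The naive candidate $u_{\lambda}:=\mA_{k}v_{\lambda}$ then has the required $\lebe^{\infty}$-bound, coincides with $u$ on the good set, and satisfies $\mA_{k+1}u_{\lambda}=\mA_{k+1}\mA_{k}v_{\lambda}=0$ globally by exactness. As in Section~\ref{sec:consttrunc}, one must then rewrite $u_{\lambda}$ on each Whitney cube purely in terms of $u$, via averaged moments of $u$ over simplices spanned by cube centres --- the abstract analogues of the maps $\mathfrak{A}_{\alpha,\beta}$ and $\mathfrak{B}_{\alpha}$ from~\eqref{eq:ABdefine} --- so as to eliminate the non-canonical potential $v$.

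The principal obstacle is purely algebraic. The construction of Section~\ref{sec:construction} rests on the explicit cancellation identities of Lemma~\ref{lem:ABprops}, derived by hand from the pointwise structure of $\curl\curl^{\top}$ and $\di$ in $\R^{3}$. For a general exact sequence, one needs an intrinsic replacement encoding the compatibility between the symbols $\mA_{k-1}[\xi]$, $\mA_{k}[\xi]$ and $\mA_{k+1}[\xi]$. A natural candidate is the Koszul-type complex attached to these symbols: pairing $u$ against outer normals of simplices should reconstruct $v$ modulo polynomials of degree below the order of $\mA_{k}$, while the relation $\mA_{k+1}\circ\mA_{k}=0$ should translate into the simplex identities playing the role of Lemma~\ref{lem:ABprops}~\ref{item:aux6}--\ref{item:aux7}. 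Once such a framework is in place, the remaining analytic ingredients --- the $\lebe^{\infty}$-bound as in Lemma~\ref{lem:linftybound}, the preservation of the differential constraint as in Lemmas~\ref{lem:divfreelocal} and~\ref{lem:globaldivfree}, and the strong stability as in Section~\ref{sec:strongstab} --- should transfer verbatim by the same Whitney-cover and weak-$(1,1)$-maximal-function machinery used throughout the paper, and the upgrade (ii) to a $\sobo^{\mA_{k},1}$-$\sobo^{\mA_{k},\infty}$-truncation needed to close the induction should follow by differentiating the simplex identities once more and invoking the $\lebe^{\infty}$-bound on $\mA_{k}u_{\lambda}$ obtained in the previous loop.
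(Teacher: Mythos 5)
The statement you were asked to prove is explicitly labelled a \emph{Conjecture} in the paper; the authors do not prove it. They only sketch the programme — bootstrap from the $\C$-elliptic base via the $\sobo^{\mathbb{A},1}$-$\sobo^{\mathbb{A},\infty}$-truncation of \cite{Behn20}, then propagate the truncation one slot further along the exact sequence by an analogue of the rewriting identity \eqref{schweinsteiger} — and state that carrying this out is expected, not done. Your proposal re-articulates that same programme, but it does not convert it into a proof, and it contains a genuine gap in exactly the place where the paper itself stops.

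Concretely, your ``naive candidate'' $u_{\lambda}:=\mA_{k}v_{\lambda}$ obtained from truncating a potential $v$ is precisely the \emph{potential truncation} that the paper shows is insufficient. It yields the $\lebe^{\infty}$-bound and preserves $\mA_{k+1}u_{\lambda}=0$, and at $p=1$ it does not even get off the ground (Ornstein), but the decisive failure — spelled out in Remark~\ref{rem:nonnatural} and Proposition~\ref{prop:modtrunc} — is that the bad set is then $\{\sum_j\mathcal{M}(\nabla^j\mathbb{A}^{-1}u)>\lambda\}$ rather than $\{\mathcal{M}u>\lambda\}$, so strong stability (item (b) of the conjecture) fails. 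What makes Theorem~\ref{thm:main2} work in the $\mathrm{div}$-case is that the truncation is defined \emph{intrinsically} in terms of $u$ via the simplex functionals $\mathfrak{A}_{\alpha,\beta}$, $\mathfrak{B}_{\alpha}$ of \eqref{eq:ABdefine}, whose cancellation properties in Lemma~\ref{lem:ABprops} encode the Gau\ss--Green theorem and the relation $\mA_{k+1}\circ\mA_{k}=0$ concretely. You present the passage from the potential truncation to such an intrinsic formula as a mere ``rewrite'' on each Whitney cube, but this is the entire content of the missing step. You acknowledge this yourself: ``The principal obstacle is purely algebraic\ldots A natural candidate is the Koszul-type complex\ldots Once such a framework is in place, the remaining analytic ingredients\ldots should transfer verbatim.'' No such framework is constructed — there is no general definition of the simplex moments for arbitrary orders and symbols, no proof of the analogues of Lemma~\ref{lem:ABprops}\ref{item:aux1}--\ref{item:aux7}, no verification that the local solenoidality computation of Lemma~\ref{lem:divfreelocal} or the global argument of Lemma~\ref{lem:globaldivfree} generalises. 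The same holds for your step (ii) (upgrading to a $\sobo^{\mA_{k},1}$-$\sobo^{\mA_{k},\infty}$-truncation): the paper never carries out such an iteration even for a single additional slot, and its actual proof for $\mathrm{div}$ in $\R^{3}$ is not inductive but a direct construction. In short, the proposal restates the authors' conjectural strategy and flags the missing lemma without supplying it, so it does not constitute a proof of the statement.
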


\section{Appendix}\label{sec:appendix}

In this appendix, we give the computational details for some of the identities used in the main part of the paper. 
We will need the following 
	\begin{lemma}\label{lem:summationABvanish}
		Let $a,b,c \in \mathbb{N}^3$ be multi-indices with $|a|,|b|,|c|\geq 1$ and $\alpha,\beta \in \lbrace 1,2,3\rbrace$. Then on the set $\mathcal{O}_\lambda$ have
		\begin{equation}\label{eq:summationBvanish}
		\sum _{ijk}\partial _a \phi _k \partial _b \phi _j \partial _c \phi _i \mathfrak{B}_\alpha (i,j,k)=0,
		\end{equation}
		and
		\begin{equation}\label{eq:summationAvanish}
		\sum _{ijk}\partial _a \phi _k \partial _b \phi _j \partial _c \phi _i \mathfrak{A}_{\alpha ,\beta} (i,j,k)=0.
		\end{equation}
	\end{lemma}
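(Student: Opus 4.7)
The plan is to combine the tetrahedral Gau{\ss}-Green identities \ref{item:aux6} and \ref{item:aux7} of Lemma~\ref{lem:ABprops} with the partition-of-unity relation $\sum_{m} \varphi_m = 1$ on $\mathcal{O}_\lambda$. The latter immediately yields $\sum_m \partial^\gamma \varphi_m = 0$ on $\mathcal{O}_\lambda$ for every multi-index $\gamma$ with $|\gamma| \geq 1$; and, thanks to \ref{item:W3} together with the local support of the $\varphi_m$, this $m$-sum is in fact finite at every point, so no issues with convergence or interchange of summation arise.

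To establish \eqref{eq:summationBvanish}, I would fix an arbitrary reference index $l_0 \in \mathbb{N}$ and use \ref{item:aux6} in the form
\[
\mathfrak{B}_\alpha(i,j,k) = \mathfrak{B}_\alpha(l_0,j,k) + \mathfrak{B}_\alpha(i,l_0,k) + \mathfrak{B}_\alpha(i,j,l_0),
\]
thereby splitting the left-hand side of \eqref{eq:summationBvanish} into three triple sums. In the first piece, $\mathfrak{B}_\alpha(l_0,j,k)$ is independent of $i$, so carrying out the $i$-summation first produces a factor $\sum_i \partial_c \varphi_i = 0$ on $\mathcal{O}_\lambda$ (this is where $|c| \geq 1$ enters); since the remaining double sum is pointwise finite, the whole piece vanishes. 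The other two pieces are killed in exactly the same way by performing first the $j$-summation (using $|b| \geq 1$) and then the $k$-summation (using $|a| \geq 1$), respectively.

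The proof of \eqref{eq:summationAvanish} is structurally identical: one applies the analogous decomposition
\[
\mathfrak{A}_{\alpha,\beta}(i,j,k)(y) = \mathfrak{A}_{\alpha,\beta}(l_0,j,k)(y) + \mathfrak{A}_{\alpha,\beta}(i,l_0,k)(y) + \mathfrak{A}_{\alpha,\beta}(i,j,l_0)(y)
\]
from \ref{item:aux7}, which is valid pointwise in $y$, and three applications of $\sum_m \partial^\gamma \varphi_m = 0$ dispose of each of the three resulting pieces. There is no genuine obstacle here — the argument is pure bookkeeping — but some care is needed to confirm that the identities \ref{item:aux6}, \ref{item:aux7} remain valid when some of the four simplices $\langle x_i, x_j, x_k\rangle$, $\langle x_{l_0}, x_j, x_k\rangle$, $\langle x_i, x_{l_0}, x_k\rangle$, $\langle x_i, x_j, x_{l_0}\rangle$ happen to be degenerate (so that the corresponding terms vanish by convention). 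This is however already built into the proof of Lemma~\ref{lem:ABprops} via Gau{\ss}' theorem on the tetrahedron $\langle x_i, x_j, x_k, x_{l_0}\rangle$, so the decomposition is unconditionally valid for any choice of $l_0$ and the plan goes through.
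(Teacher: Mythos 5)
Your proposal is correct and essentially reproduces the paper's own argument: the paper rewrites the triple sum as a quadruple sum over $i,j,k,m$ by subtracting the terms $\mathfrak{B}_\alpha(m,j,k)$, $\mathfrak{B}_\alpha(i,m,k)$, $\mathfrak{B}_\alpha(i,j,m)$ --- each of which vanishes after summing out the free index against $\sum \partial^\gamma\varphi = 0$ --- and then invokes Lemma~\ref{lem:ABprops}~\ref{item:aux6}/\ref{item:aux7} to kill the resulting bracket. You invert the order (expand $\mathfrak{B}_\alpha(i,j,k)$ via the tetrahedral identity first, then sum out each free index), which is the same bookkeeping; your remark about degenerate simplices being harmless is likewise consistent with the paper's proof of Lemma~\ref{lem:ABprops}.
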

	\begin{proof}
		Recall from the definition of the $\phi _l$ that $\sum \phi _l\equiv 1$ on $\mathcal{O}_\lambda$. We therefore have $\sum \partial _a \phi_l=\sum \partial _b \phi _l =\sum \partial _c \phi _l=0$. We can use this to get
		\begin{align*}
		&\sum _{ijk}\partial _a \phi _k \partial _b \phi _j \partial _c \phi _i \mathfrak{B}_\alpha (i,j,k)\\
		= &\sum _{ijkm}\partial _a \phi _k \partial _b \phi _j \partial _c \phi _i \Big( \mathfrak{B}_\alpha (i,j,k)-\mathfrak{B}_\alpha (m,j,k)-\mathfrak{B}_\alpha (i,m,k)-\mathfrak{B}_\alpha (i,j,m) \Big)
		\end{align*}
		Now \eqref{eq:summationBvanish} follows from Lemma \ref{lem:ABprops} \ref{item:aux6}; \eqref{eq:summationAvanish} can be shown completely analogously. 
	\end{proof}
\subsection{Proof of Lemma~\ref{lem:divfreelocal}}\label{sec:divfreelocal}
We focus on the case $\alpha=1$. Let thus $D:=\di  (T_\lambda w)_1 $. To avoid notational overload we omit the arguments $i,j$ and $k$ of $\AA _{\alpha , \beta} (i,j,k)$ and $\mathfrak{B}_\alpha  (i,j,k)$ in the following equation. Thus, all $\AA _{\alpha , \beta}$ and $\mathfrak{B}_\alpha$ implicitly depend on the summation indices. By the definition of $T_\lambda w$ on $\mathcal{O}_\lambda$, \eqref{eq:deftruncation}, we have 
		\begin{align*}
		D =& 6 \sum_{ijk}\partial_{1}(\varphi_{k}\partial_2 \phi_j \partial_3 \phi_i )\mathfrak{B}_{1} &(=T_{1}) \\ 
		& + 2\sum_{ijk}\partial_{1}(\varphi_{k}(\partial_{33}\varphi_{j}\partial_{2}\varphi_{i}-\partial_{23}\varphi_{j}\partial_{3}\varphi_{i}))\mathfrak{A}_{3,1} & (=T_{2})\\ 
		& + 2\sum_{ijk}\varphi_{k}(\partial_{33}\varphi_{j}\partial_{2}\varphi_{i}-\partial_{23}\varphi_{j}\partial_{3}\varphi_{i})\partial_{1}\mathfrak{A}_{3,1} &(=T_{3})\\
		& + 2\sum_{ijk}\partial_{1}(\varphi_{k}(\partial_{22}\varphi_{j}\partial_{3}\varphi_{i}-\partial_{23}\varphi_{j}\partial_{2}\varphi_{i}))\mathfrak{A}_{1,2}&(=T_{4})\\ 
		& +2\sum_{ijk}\varphi_{k}(\partial_{22}\varphi_{j}\partial_{3}\varphi_{i}-\partial_{32}\varphi_{j}\partial_{2}\varphi_{i})\partial_{1}\mathfrak{A}_{1,2}&(=T_{5})\\
		& + 3 \sum_{ijk}\partial_{2}(\varphi_{k}\partial_3 \phi_j \partial_1 \phi_i )\mathfrak{B}_{1} &(=T_{6})\\ 
		& + 3\sum_{ijk}\partial_{2}(\varphi_{k}\partial_2 \phi_j \partial_3 \phi_i)\mathfrak{B}_2  & (=T_{7})\\ 
		& + \sum_{ijk}\partial_{2}(\varphi_{k}(\partial_{23} \phi_j \partial_3 \phi_i - \partial_{33} \phi_j \partial_2 \phi_i)) \mathfrak{A}_{2,3} &(=T_{8})\\
		& + \sum_{ijk}\varphi_{k}(\partial_{23} \phi_j \partial_3 \phi_i - \partial_{33} \phi_j \partial_2 \phi_i) \partial_{2}\mathfrak{A}_{2,3}&(=T_{9})\\
		&+\sum_{ijk}\partial_{2}(\varphi_{k}(\partial_{13}\phi_j \partial_3\phi_i - \partial_{33} \phi_j \partial_1 \phi_i)) \mathfrak{A}_{3,1}&(=T_{10}) \\
		&  +\sum_{ijk}\varphi_{k}(\partial_{13}  \phi_j \partial_3\phi_i - \partial_{33} \phi_j \partial_1 \phi_i) \partial_{2}\mathfrak{A}_{3,1}&(=T_{11})\\
		&+\sum_{ijk} \partial_{2}(\varphi_{k}(\partial_{13}  \phi_j \partial_2 \phi_i + \partial_{23} \phi_j \partial_1 \phi_i - 2 \partial_{12} \phi_j \partial_3 \phi_i)) \mathfrak{A}_{1,2} &(=T_{12})\\ 
		& + \sum_{ijk} (\varphi_{k}(\partial_{13}  \phi_j \partial_2 \phi_i + \partial_{23} \phi_j \partial_1 \phi_i - 2 \partial_{12}  \phi_j \partial_3 \phi_i)) \partial_{2}\mathfrak{A}_{1,2}&(=T_{13})\\
		& + 3 \sum_{ijk}\partial_{3}(\varphi_{k}\partial_2 \phi_j \partial_3 \phi_i) \mathfrak{B}_{3} & (=T_{14})\\ 
		& + 3\sum_{ijk}\partial_{3}(\varphi_{k}\partial_1 \phi_j \partial_2 \phi_i )\mathfrak{B}_1  &(=T_{15})\\ 
		& + \sum_{ijk}\partial_{3}(\varphi_{k}(\partial_{12} \phi_j \partial_2 \phi_i - \partial_{22} \phi_j \partial_1 \phi_i)) \mathfrak{A}_{1,2} &(=T_{16})\\
		& + \sum_{ijk}(\varphi_{k}(\partial_{12} \phi_j \partial_2 \phi_i - \partial_{22} \phi_j \partial_1 \phi_i)) \partial_{3}\mathfrak{A}_{1,2} &(=T_{17})\\
		&+\sum_{ijk}\partial_{3}(\varphi_{k}(\partial_{23}  \phi_j \partial_2\phi_i - \partial_{22} \phi_j \partial_3 \phi_i)) \mathfrak{A}_{2,3}&(=T_{18}) \\
		& +\sum_{ijk}(\varphi_{k}(\partial_{23} \phi_j \partial_2\phi_i - \partial_{22} \phi_j \partial_3 \phi_i)) \partial_{3}\mathfrak{A}_{2,3} &(=T_{19})\\
		&+\sum_{ijk} \partial_{3}(\varphi_{k}(\partial_{23} \phi_j \partial_1 \phi_i + \partial_{12} \phi_j \partial_3 \phi_i - 2 \partial_{13}  \phi_j \partial_2 \phi_i)) \mathfrak{A}_{3,1} &(=T_{20})\\ 
		& + \sum_{ijk} (\varphi_{k}(\partial_{23} \phi_j \partial_1 \phi_i + \partial_{12} \phi_j \partial_3 \phi_i - 2 \partial_{13} \phi_j \partial_2 \phi_i)) \partial_{3}\mathfrak{A}_{3,1}&(=T_{21})\\
		& = \sum_{ijk}f_{ijk}^{(1)}\mathfrak{B}_{1}+f_{ijk}^{(2)}\mathfrak{B}_{2}+f_{ijk}^{(3)}\mathfrak{B}_{3}+f_{ijk}^{(1,2)}\mathfrak{A}_{1,2}+f_{ijk}^{(2,3)}\mathfrak{A}_{2,3}+f_{ijk}^{(3,1)}\mathfrak{A}_{3,1}& =: (*)
		\end{align*}
		for suitable coefficient maps $f_{ijk}^{(\cdot)}$ or $f_{ijk}^{(\cdot,\cdot)}$, respectively. To achieve this grouping we use Lemma~ \ref{lem:ABprops} \ref{item:aux1} and \ref{item:aux2} as well as the fact that $T_{11}=T_{17}=0$. In the following we will show that each of the six sums in $(*)$ vanishes individually. This is done by a very similar calculation every time. \\
		Ad~$f_{ijk}^{(1)}$. Here the coefficients are determined by terms $T_{1},T_{6},T_{13},T_{15}$ and $T_{21}$. Therefore, 
		\begin{align*}
		f_{ijk}^{(1)} = &~~6 \partial_{1}\varphi_{k}\partial_2 \phi_j \partial_3 \phi_i +6 \varphi_{k}\partial_{12} \phi_j \partial_3 \phi_i +6 \varphi_{k}\partial_2 \phi_j \partial_{13}\phi_i  + 3 \partial_{2}\varphi_{k}\partial_3 \phi_j \partial_1 \phi_i  \\
		&+ 3 \varphi_{k}\partial_{23} \phi_j \partial_1 \phi_i  + 3 \varphi_{k}\partial_3 \phi_j \partial_{12} \phi_i   +  \varphi_{k}\partial_{13}\phi_j \partial_2 \phi_i + \varphi_{k}\partial_{23} \phi_j \partial_1 \phi_i \\
		&+ (-2) \varphi_{k} \partial_{12}  \phi_j \partial_3 \phi_i  + 3\partial_{3}\varphi_{k}\partial_1 \phi_j \partial_2 \phi_i +3\varphi_{k}\partial_{13} \phi_j \partial_2 \phi_i +3\varphi_{k}\partial_1 \phi_j \partial_{23} \phi_i \\ 
		& + (-1)  \varphi_{k} \partial_{23} \phi_j \partial_1 \phi_i + (-1) \varphi_{k} \partial_{12} \phi_j \partial_3 \phi_i + 2 \varphi_{k}\partial_{13}  \phi_j \partial_2 \phi_i  =: P_{1}^{ijk}+...+P_{15}^{ijk}. 
		\end{align*}
		In the next step we group those of the $P_{l}^{ijk}$ together, that have the same structure apart from a permutation of the indices $i, j$ and $k$. For example, we have
		\begin{equation*}
		P_1^{ijk}=2P_4^{jki}=2P_{10}^{kij}.
		\end{equation*}
		We now group all the terms and then perform the corresponding index permutations:\\
		\begin{align*}
		\sum _{ijk}f_{ijk}^{(1)}\mathfrak{B}_1(i,j,k)&=\sum_{ijk} \Big[  (P_1^{ijk}+P_4^{ijk}+P_{10}^{ijk})+(P_2^{ijk}+P_6^{ijk}+P_9^{ijk}+P_{14}^{ijk})\\
		&~~+(P_3^{ijk}+P_7^{ijk}+P_{11}^{ijk}+P_{15}^{ijk})+(P_5^{ijk}+P_8^{ijk}+P_{12}^{ijk}+P_{13}^{ijk}) \Big]\mathfrak{B}_1(i,j,k) \\
		&=\sum_{ijk} P_1^{ijk}\big(\mathfrak{B}_1(i,j,k)+\tfrac{1}{2}\mathfrak{B}_1(j,k,i)+\tfrac{1}{2}\mathfrak{B}_1(k,i,j)\big)\\
		&\quad\quad+P_2^{ijk}\big(\mathfrak{B}_1(i,j,k)+\tfrac{1}{2}\mathfrak{B}_1(j,i,k)-\tfrac{1}{3}\mathfrak{B}_1(i,j,k)-\tfrac{1}{6}\mathfrak{B}_1(i,j,k) \big)\\
		&\quad\quad+P_3^{ijk}\big(\mathfrak{B}_1(i,j,k)+\tfrac{1}{6}\mathfrak{B}_1(j,i,k)+\tfrac{1}{2}\mathfrak{B}_1(j,i,k)+\tfrac{1}{3}\mathfrak{B}_1(j,i,k) \big)\\
		&\quad\quad+P_5^{ijk}\big(\mathfrak{B}_1(i,j,k)+\tfrac{1}{3}\mathfrak{B}_1(i,j,k)+\mathfrak{B}_1(j,i,k)-\tfrac{1}{3}\mathfrak{B}_1(i,j,k) \big)\\
		&=2\sum _{ijk} P_1^{ijk}\mathfrak{B}_1(i,j,k)=:(**),
		\end{align*}
		where we used Lemma \ref{lem:ABprops} \ref{item:aux4} to get the last equality. Finally, Lemma \ref{lem:summationABvanish} implies that $(**)$ vanishes identically.\\
		Ad~$f_{ijk}^{(2)}$. For the corresponding coefficients, only terms $T_{5},T_{7}$ and $T_{19}$ matter here. Therefore, 
		\begin{align*}
		f_{ijk}^{(2)} & = -2\phi_{k}\partial_{22}\phi_{j}\partial_{3}\varphi_{i}+2\varphi_{k}\partial_{23}\varphi_{j}\partial_{2}\varphi_{i}  + 3\partial_{2}\varphi_{k}\partial_2 \phi_j \partial_3 \phi_i+3\varphi_{k}\partial_{22} \phi_j \partial_3 \phi_i\\
		&\quad+3\varphi_{k}\partial_2 \phi_j \partial_{23} \phi_i + \varphi_{k}\partial_{23} \phi_j \partial_2\phi_i+ (-1) \varphi_{k}\partial_{22} \phi_j \partial_3 \phi_i \quad =:  Q_{1}^{ijk} + ... +Q_{7}^{ijk}.
		\end{align*}
		Grouping similar terms and permuting indices as above we get
		\begin{align*}
		\sum _{ijk}f_{ijk}^{(1)}\mathfrak{B}_2(i,j,k)&=\sum_{ijk} \Big[  (Q_1^{ijk}+Q_4^{ijk}+Q_{7}^{ijk})+(Q_2^{ijk}+Q_5^{ijk}+Q_6^{ijk})+Q_3^{ijk} \Big]\mathfrak{B}_2(i,j,k) \\  
		&=\sum_{ijk} Q_1^{ijk}\big(\mathfrak{B}_2(i,j,k)-\tfrac{3}{2}\mathfrak{B}_2(i,j,k)+\tfrac{1}{2}\mathfrak{B}_2(i,j,k)\big)\\
		&\quad\quad+Q_2^{ijk}\big(\mathfrak{B}_2(i,j,k)+\tfrac{3}{2}\mathfrak{B}_2(j,i,k)+\tfrac{1}{2}\mathfrak{B}_2(i,j,k) \big)+Q_3^{ijk}\mathfrak{B}_2(i,j,k)\\    
		&=\sum _{ijk} Q_3^{ijk}\mathfrak{B}_2(i,j,k)=0,
		\end{align*}
		where we again used Lemma \ref{lem:ABprops} \ref{item:aux4} and in the last step Lemma \ref{lem:summationABvanish}.\\
		Ad~$f_{ijk}^{(3)}$. Here, only terms $T_{3},T_{9},T_{14}$ contribute to the corresponding coefficients. Thus, 
		\begin{align*}
		f_{ijk}^{(3)}  =& ~~2\varphi_{k}\partial_{33}\varphi_{j}\partial_{2}\varphi_{i} + (-2)\varphi_{k}\partial_{23}\varphi_{j}\partial_{3}\varphi_{i}  + (-1) \varphi_{k}\partial_{23} \phi_j \partial_3 \phi_i+\varphi_{k} \partial_{33} \phi_j \partial_2 \phi_i\\
		&+ 3 \partial_{3}\varphi_{k}\partial_2 \phi_j \partial_3 \phi_i + 3 \varphi_{k}\partial_{23} \phi_j \partial_3 \phi_i + 3 \varphi_{k}\partial_2 \phi_j \partial_{33} \phi_i \quad  =: S_{1}^{ijk}+...+S_7^{ijk}.
		\end{align*}
		We thus get
		\begin{align*}
		\sum _{ijk}f_{ijk}^{(3)}\mathfrak{B}_3(i,j,k)&=\sum _{ijk} \Big[ (S_1^{ijk}+S_4^{ijk}+S_7^{ijk})+(S_2^{ijk}+S_3^{ijk}+S_6^{ijk})+S_5^{ijk}  \Big]\mathfrak{B}_3(i,j,k)\\
		&=\sum _{ijk} S_1^{ijk}(\mathfrak{B}_3(i,j,k)+\tfrac{1}{2}\mathfrak{B}_3(i,j,k)+\tfrac{3}{2}\mathfrak{B}_3(j,i,k))\\
		&\qquad+S_2^{ijk}(\mathfrak{B}_3(i,j,k)+\tfrac{1}{2}\mathfrak{B}_3(i,j,k)-\tfrac{3}{2}\mathfrak{B}_3(i,j,k))+S_5^{ijk}\mathfrak{B}_3(i,j,k)\\
		&=\sum _{ijk}S_5^{ijk}\mathfrak{B}_3(i,j,k)=0. 
		\end{align*}
		Ad~$f_{ijk}^{(1,2)}$. These coefficients are determined by $T_{4},T_{12}$ and $T_{16}$. In consequence, 
		\begin{align*}
		f_{ijk}^{(1,2)} =&~~ 2\partial_{1}\varphi_{k}\partial_{22}\varphi_{j}\partial_{3}\varphi_{i} +  2\varphi_{k}\partial_{122}\varphi_{j}\partial_{3}\varphi_{i} +  2\varphi_{k}\partial_{22}\varphi_{j}\partial_{13}\varphi_{i} +(-2)\partial_{1}\varphi_{k}\partial_{23}\varphi_{j}\partial_{2}\varphi_{i}\\
		&+(-2)\varphi_{k}\partial_{123}\varphi_{j}\partial_{2}\varphi_{i}+(-2)\varphi_{k}\partial_{23}\varphi_{j}\partial_{12}\varphi_{i}+  \partial_{2}\varphi_{k}\partial_{13}  \phi_j \partial_2 \phi_i+ \varphi_{k}\partial_{123} \phi_j \partial_2 \phi_i\\
		&+ \varphi_{k}\partial_{13}  \phi_j \partial_{22} \phi_i+ \partial_{2}\varphi_{k}\partial_{23}  \phi_j \partial_1 \phi_i+ \varphi_{k}\partial_{223} \phi_j \partial_1 \phi_i+ \varphi_{k}\partial_{23} \phi_j \partial_{12} \phi_i\\
		&+(-2)\partial_{2}\varphi_{k}\partial_{12}  \phi_j \partial_3 \phi_i +(-2) \varphi_{k}\partial_{122} \phi_j \partial_3 \phi_i +(-2) \varphi_{k}\partial_{12}  \phi_j \partial_{23} \phi_i  +\partial_{3}\varphi_{k}\partial_{12}  \phi_j \partial_2 \phi_i\\ &+\varphi_{k}\partial_{123} \phi_j \partial_2 \phi_i +\varphi_{k}\partial_{12} \phi_j \partial_{23}\phi_i+(-1)\partial_{3}\varphi_{k}\partial_{22} \phi_j \partial_1 \phi_i +(-1)\varphi_{k}\partial_{223} \phi_j \partial_1 \phi_i\\
		&+(-1)\varphi_{k}\partial_{22} \phi_j \partial_{13} \phi_i \quad =:  U_1^{ijk}+...+U_{21}^{ijk}.
		\end{align*}
		Here we can first note that by Lemma \ref{lem:summationABvanish} for each $l \in \lbrace 1,4,7,10,13,16,19 \rbrace$ the terms $U_{l}^{ijk}\AA_{1,2}(i,j,k)$ sum up to zero. We thus have
		\begin{align*}
		\sum _{ijk}f_{ijk}^{(1,2)}\mathfrak{A}_{1,2}(i,j,k)&=\sum _{ijk} \Big[ (U_{2}^{ijk}+U_{14}^{ijk})+(U_{3}^{ijk}+U_{9}^{ijk}+U_{21}^{ijk})+(U_{5}^{ijk}+U_{8}^{ijk}+U_{17}^{ijk})\\
		&\qquad+(U_{6}^{ijk}+U_{12}^{ijk}+U_{15}^{ijk}+U_{18}^{ijk})+(U_{11}^{ijk}+U_{20}^{ijk}) \Big]\mathfrak{A}_{1,2}(i,j,k)\\
		&=\sum _{ijk}~ U_{2}^{ijk}(\AA_{1,2}(i,j,k)-\AA_{1,2}(i,j,k) )\\ &\qquad+U_{3}^{ijk}(\AA_{1,2}(i,j,k)+\tfrac{1}{2}\AA_{1,2}(j,i,k)-\tfrac{1}{2}\AA_{1,2}(i,j,k) )\\
		&\qquad+U_{5}^{ijk}( \AA_{1,2}(i,j,k)-\tfrac{1}{2}\AA_{1,2}(i,j,k)-\tfrac{1}{2}\AA_{1,2}(i,j,k)) \\
		&\qquad+U_{6}^{ijk}(\AA_{1,2}(i,j,k)-\tfrac{1}{2}\AA_{1,2}(i,j,k)+\AA_{1,2}(j,i,k)-\tfrac{1}{2}\AA_{1,2}(j,i,k) )\\ &\qquad+U_{11}^{ijk}(\AA_{1,2}(i,j,k)-\AA_{1,2}(i,j,k) ) \quad=0.
		\end{align*}
		Ad $f_{ijk}^{(2,3)}$. Only the terms $T_8$ and $T_{18}$ matter here. In particular,
		\begin{align*}
		f_{ijk}^{(2,3)}=&~~  \partial_2 \varphi_k \partial_{23} \varphi_j \partial_3 \varphi_i +(-1) \partial_2  \varphi_k \partial_{33} \phi_j \partial_2\phi_i + \partial_3 \phi_k  \partial_{23} \phi_j \partial_2 \phi_i +(-1)\partial_3 \phi_k \partial_{22} \phi_j \partial_3 \phi_i\\
		&+  2 \phi_k \partial_{23} \phi_j \partial_{23} \phi_i +(-1)  \phi_k \partial_{33} \phi_j \partial_{22} \phi_i  +(-1)  \phi_k \partial_{22} \phi_j \partial_{33}\phi _i =:  V_{1}^{ijk} + ...+ V_{7}^{ijk}
		\end{align*}
		We first note that the terms $V_{l}^{ijk}\AA_{2,3}(i,j,k)$ for $l \in \lbrace 1,2,3,4 \rbrace$ all sum up to zero (Lemma \ref{lem:summationABvanish}). Consequently,
		\begin{align*}
		\sum _{ijk}f_{ijk}^{(2,3)}\mathfrak{A}_{2,3}(i,j,k)&=\sum _{ijk} \Big[(V_{6}^{ijk}+V_{7}^{ijk})+V_{5}^{ijk} \Big]\AA_{2,3}(i,j,k)\\
		&=\sum _{ijk} V_{6}^{ijk}(\AA_{2,3}(i,j,k)+\AA_{2,3}(j,i,k))+V_{5}^{ijk}\AA_{2,3}(i,j,k)\\
		&=\sum _{ijk} V_{5}^{ijk}\AA_{2,3}(i,j,k).
		\end{align*}
		To see that the final term vanishes, we notice $V_5^{ijk}=V_5^{jik}$ and thus
		\begin{equation*}
		\sum _{ijk} V_{5}^{ijk}\AA_{2,3}(i,j,k)=\sum _{ijk} V_{5}^{ijk}(\tfrac{1}{2}\AA_{2,3}(i,j,k)+\tfrac{1}{2}\AA_{2,3}(j,i,k))=0.
		\end{equation*}
		Ad $f_{ijk}^{(3,1)}$. Here, only the terms $T_2$, $T_{10}$ and $T_{20}$ are relevant and therefore
		\begin{align*}
		f_{ijk}^{(3,1)}=&~~ 2\partial_1 \phi_k \partial_{33} \phi_j \partial_2 \phi_1 +(-2) 2\partial_1 \phi_k \partial_{23} \phi_j \partial_3 \phi_i +  2 \phi_k \partial_{133}  \phi_j \partial_2 \phi_i +(-2)  \phi_k \partial_{123} \phi_j \partial_3 \phi_i\\
		&+  2 \phi_k \partial_{33} \phi_j \partial_{12} \phi_i +(-2) \phi_k \partial_{23} \phi_j \partial_{13} \phi_i + \partial_2 \phi_k \partial_{13} \phi_j \partial_3 \phi_i +(-1) \partial_2 \phi_k \partial_{33} \phi_j \partial_1 \phi_i \\
		&+ \phi_k \partial_{123} \phi_j \partial_3 \phi_i +(-1) \phi_k \partial_{233} \phi_j \partial_1 \phi_i + \phi_k \partial_{13} \phi_j \partial_{23} \phi_i+(-1)\phi_k \partial_{33} \phi_j \partial_{12} \phi_i \\
		&+  \partial_3 \phi_k \partial_{23}  \phi_j \partial_1 \phi_i +  \partial_3 \phi_k \partial_{12} \phi_j \partial_3 \phi_i +(-2) \partial_3 \phi_k \partial_{13} \phi_j \partial_2 \phi_i + \phi_k \partial_{233}  \phi_j \partial_1 \phi_i \\
		&+  \phi_k \partial_{123} \phi_j  \partial_3 \phi_i +(-2) \phi_k \partial_{133} \phi_j \partial_2 \phi_i +  \phi_k \partial_{23} \phi_j \partial_{13} \phi_i +  \phi_k \partial_{12} \phi_j \partial_{33} \phi_i \\
		&+(-2) \phi_k \partial_{13} \phi_j \partial_{23} \phi_i \qquad \quad=:W_1^{ijk} +...+W_{21}^{ijk}
		\end{align*}
		We first apply Lemma $\ref{lem:summationABvanish}$ to see that we can ignore the terms corresponding to $W_l^{ijk}$ for $l \in \lbrace 1,2,7,8,13,14,15 \rbrace$. For the remaining terms we calculate
		\begin{align*}
		\sum _{ijk}f_{ijk}^{(3,1)}\mathfrak{A}_{3,1}(i,j,k)&=\sum _{ijk} \Big[ (W_{3}^{ijk}+W_{18}^{ijk})+(W_{4}^{ijk}+W_{9}^{ijk}+W_{17}^{ijk})+(W_{5}^{ijk}+W_{12}^{ijk}+W_{20}^{ijk})\\
		&\qquad+(W_{6}^{ijk}+W_{11}^{ijk}+W_{19}^{ijk}+W_{21}^{ijk})+(W_{10}^{ijk}+W_{16}^{ijk}) \Big]\AA_{3,1}(i,j,k)\\
		&=\sum _{ijk}~ W_{3}^{ijk}(\AA_{3,1}(i,j,k)-\AA_{3,1}(i,j,k))\\
		&\qquad +W_{4}^{ijk}(\AA_{3,1}(i,j,k)-\tfrac{1}{2}\AA_{3,1}(i,j,k)-\tfrac{1}{2}\AA_{3,1}(i,j,k))\\
		&\qquad +W_{5}^{ijk}(\AA_{3,1}(i,j,k)-\tfrac{1}{2}\AA_{3,1}(i,j,k)+\tfrac{1}{2}\AA_{3,1}(j,i,k))\\
		&\qquad+W_{6}^{ijk}(\AA_{3,1}(i,j,k)-\tfrac{1}{2}\AA_{3,1}(j,i,k)-\tfrac{1}{2}\AA_{3,1}(i,j,k)+\AA_{3,1}(j,i,k))\\
		&\qquad+W_{10}^{ijk}(\AA_{3,1}(i,j,k)-\AA_{3,1}(i,j,k)) \quad =0.
		\end{align*}
		We thus have shown that $D=(*)=0$, yielding that the truncation is solenoidal on $\mathcal{O}_{\lambda}$.
\subsection{Proof of the identity~\eqref{eq:Bwrite}}\label{sec:App1}
Let $\psi\in\hold_{c}^{\infty}(\R^{3})$ be arbitrary. In order to obtain formula~\eqref{eq:Bwrite}, we write 
\begin{align*}
\int_{\mathcal{O}_{\lambda}}w_{1}\cdot\nabla\psi\dif x & = \int_{\mathcal{O}_{\lambda}}\mathbf{T}(\mathfrak{A}_{1,2},\nabla\psi)\dif x + \int_{\mathcal{O}_{\lambda}}\mathbf{T}(\mathfrak{A}_{2,3},\nabla\psi)\dif x  + \int_{\mathcal{O}_{\lambda}}\mathbf{T}(\mathfrak{A}_{3,1},\nabla\psi)\dif x \\ 
& +  \int_{\mathcal{O}_{\lambda}}\mathbf{T}(\mathfrak{B}_{1},\nabla\psi)\dif x + \int_{\mathcal{O}_{\lambda}}\mathbf{T}(\mathfrak{B}_{2},\nabla\psi)\dif x  + \int_{\mathcal{O}_{\lambda}}\mathbf{T}(\mathfrak{B}_{3},\nabla\psi)\dif x \\ 
& =: \sum_{\ell=1}^{6}S_{\ell}, 
\end{align*}
where we indicate e.g. by $\mathbf{T}(\mathfrak{A}_{1,2},\nabla\psi)$ that, when writing out $w_{1}\cdot\nabla\psi$ directly by means of \eqref{def:nondiagonal} and \eqref{def:diagonal}, $\mathbf{T}(\mathfrak{A}_{1,2},\nabla\psi)$ contains all appearances of $\mathfrak{A}_{1,2}(i,j,k)$ and analogously for the remaining terms. The underlying procedure of dealing with the different terms is analogous for the remaining columns $w_{2}$ and $w_{3}$, which is why we exclusively focus on $w_{1}$ but give all the details in this case.

In the following, we will frequently interchange the triple sum $\sum_{ijk}$ and the integral over $\mathcal{O}_{\lambda}$, which allows us treat the single terms via integration by parts. This interchanging of sums and integrals is allowed since every sum $\sum_{ijk}(...)$ has an integrable majorant, in turn being seen similarly to the reasoning that underlies the proof of Lemma~\ref{lem:globaldivfree}. 

We begin with $S_{1}$. This term is constituted by three parts $S_{1}^{1},S_{1}^{2},S_{1}^{3}$ given below, which stem from $w_{11}\partial_{1}\psi$, $w_{12}\partial_{2}\psi$ and $w_{13}\partial_{3}\psi$ (in this order). Here we have 
\begin{align*}
S_{1}^{1}=2&\sum_{ijk}\int_{\mathcal{O}_{\lambda}}\varphi_{k}(\partial_{22}\varphi_{j}\partial_{3}\varphi_{i}-\partial_{23}\varphi_{j}\partial_{2}\varphi_{i})\AA_{1,2}(i,j,k)\partial_{1}\psi \dif x &\\
& = -2\sum_{ijk}\int_{\mathcal{O}_{\lambda}}(\partial_{2}\varphi_{j})(\partial_{2}\varphi_{k}\partial_{3}\varphi_{i}\AA_{1,2}(i,j,k)\partial_{1}\psi)\dif x & (=T_{1}^{1})\\ 
& -2\sum_{ijk}\int_{\mathcal{O}_{\lambda}}(\partial_{2}\varphi_{j})(\varphi_{k}\partial_{23}\varphi_{i}\AA_{1,2}(i,j,k)\partial_{1}\psi)\dif x& (=T_{2}^{1})\\ 
& -2\sum_{ijk}\int_{\mathcal{O}_{\lambda}}(\partial_{2}\varphi_{j})(\varphi_{k}\partial_{3}\varphi_{i}\partial_{2}\AA_{1,2}(i,j,k)\partial_{1}\psi)\dif x& (=T_{3}^{1})\\ 
& -2\sum_{ijk}\int_{\mathcal{O}_{\lambda}}(\partial_{2}\varphi_{j})(\varphi_{k}\partial_{3}\varphi_{i}\AA_{1,2}(i,j,k)\partial_{12}\psi)\dif x& (=T_{4}^{1})\\ 
& - 2\sum_{i,j,k}\int_{\mathcal{O}_{\lambda}}\varphi_{k}\partial_{23}\varphi_{j}\partial_{2}\varphi_{i}\AA_{1,2}(i,j,k)\partial_{1}\psi \dif x &(=T_{5}^{1}).
\end{align*}
Permuting indices $j\leftrightarrow k$ and using the antisymmetry from Lemma~\ref{lem:ABprops}~\ref{item:aux3}, we obtain 
\begin{align}\label{eq:permuvanish1}
\begin{split}
T_{1}^{1} & = -2\sum_{ijk}\int_{\mathcal{O}_{\lambda}}(\partial_{2}\varphi_{j})(\partial_{2}\varphi_{k}\partial_{3}\varphi_{i}\AA_{1,2}(i,j,k)\partial_{1}\psi)\dif x \\ 
& = 2\sum_{ijk}\int_{\mathcal{O}_{\lambda}}(\partial_{2}\varphi_{j})(\partial_{2}\varphi_{k}\partial_{3}\varphi_{i}\AA_{1,2}(i,k,j)\partial_{1}\psi)\dif x  \\ 
& = 2\sum_{ikj}\int_{\mathcal{O}_{\lambda}}(\partial_{2}\varphi_{j})(\partial_{2}\varphi_{k}\partial_{3}\varphi_{i}\AA_{1,2}(i,k,j)\partial_{1}\psi)\dif x = - T_{1}^{1}, 
\end{split}
\end{align}
and hence $T_{1}^{1}=0$. Equally, permuting $i\leftrightarrow j$, we find that $T_{2}^{1}+T_{5}^{1}=0$. Therefore, using Lemma~\ref{lem:ABprops}~\ref{item:aux2} for $T_{3}^{1}$ and integrating by parts in term $T_{4}^{1}$ with respect to $\partial_{1}$, 
\begin{align*}
S_{1}^{1} = T_{3}^{1}+T_{4}^{1} & = -2\sum_{ijk}\int_{\mathcal{O}_{\lambda}}(\partial_{2}\varphi_{j})(\varphi_{k}\partial_{3}\varphi_{i}\mathfrak{B}_{1}(i,j,k)\partial_{1}\psi)\dif x & (=T_{6}^{1})\\ 
& + 2 \sum_{ijk}\int_{\mathcal{O}_{\lambda}}(\partial_{12}\varphi_{j})\varphi_{k}\partial_{3}\varphi_{i}\AA_{1,2}(i,j,k)\partial_{2}\psi\dif x & (=T_{7}^{1})\\
& + 2 \sum_{ijk}\int_{\mathcal{O}_{\lambda}}(\partial_{2}\varphi_{j})\partial_{1}\varphi_{k}\partial_{3}\varphi_{i}\AA_{1,2}(i,j,k)\partial_{2}\psi\dif x & (=T_{8}^{1})\\ 
& + 2 \sum_{ijk}\int_{\mathcal{O}_{\lambda}}(\partial_{2}\varphi_{j})\varphi_{k}\partial_{13}\varphi_{i}\AA_{1,2}(i,j,k)\partial_{2}\psi\dif x & (=T_{9}^{1})\\
& \!\!\!\!\!\!\!\!\!\!\!\stackrel{\text{Lem.~\ref{lem:ABprops}~\ref{item:aux1}}}{-} 2 \sum_{ijk}\int_{\mathcal{O}_{\lambda}}(\partial_{2}\varphi_{j})\varphi_{k}\partial_{3}\varphi_{i}\mathfrak{B}_{2}(i,j,k)\partial_{2}\psi\dif x & (=T_{10}^{1}).
\end{align*}
On the other hand, 
\begin{align*}
S_{1}^{2} = & \sum_{ijk}\int_{\mathcal{O}_{\lambda}}\varphi_{k}(\partial_{13}\varphi_{j}\partial_{2}\varphi_{i})\AA_{1,2}(i,j,k)\partial_{2}\psi\dif x & (=T_{1}^{2})\\ 
&+\sum_{ijk}\int_{\mathcal{O}_{\lambda}}\varphi_{k}(\partial_{23}\varphi_{j}\partial_{1}\varphi_{i})\AA_{1,2}(i,j,k)\partial_{2}\psi\dif x & (=T_{2}^{2})\\
&-\sum_{ijk}\int_{\mathcal{O}_{\lambda}}\varphi_{k}(2\partial_{12}\varphi_{j}\partial_{3}\varphi_{i})\AA_{1,2}(i,j,k)\partial_{2}\psi\dif x & (=T_{3}^{2})
\end{align*}
We finally turn to $S_{1}^{3}$. Here we have 
\begin{align*}
S_{1}^{3} & = \sum_{ijk}\int_{\mathcal{O}_{\lambda}}\varphi_{k}(\partial_{12}\varphi_{j}\partial_{2}\varphi_{i}-\partial_{22}\varphi_{j}\partial_{1}\varphi_{i})\AA_{1,2}(i,j,k)\partial_{3}\psi\dif x & \\ 
& = -\sum_{ijk}\int_{\mathcal{O}_{\lambda}}\partial_{1}\varphi_{j}\partial_{2}\varphi_{k}\partial_{2}\varphi_{i}\AA_{1,2}(i,j,k)\partial_{3}\psi\dif x &(=T_{1}^{3})\\ 
&  -\sum_{ijk}\int_{\mathcal{O}_{\lambda}}\partial_{1}\varphi_{j}\varphi_{k}\partial_{22}\varphi_{i}\AA_{1,2}(i,j,k)\partial_{3}\psi\dif x &(=T_{2}^{3})\\
& -\sum_{ijk}\int_{\mathcal{O}_{\lambda}}\partial_{1}\varphi_{j}\varphi_{k}\partial_{2}\varphi_{i}\partial_{2}\AA_{1,2}(i,j,k)\partial_{3}\psi\dif x &(=T_{3}^{3})\\ 
& - \sum_{ijk}\int_{\mathcal{O}_{\lambda}}\partial_{1}\varphi_{j}\varphi_{k}\partial_{2}\varphi_{i}\AA_{1,2}(i,j,k)\partial_{23}\psi\dif x &(=T_{4}^{3})\\ 
& - \sum_{ijk}\int_{\mathcal{O}_{\lambda}}\varphi_{k}(\partial_{22}\varphi_{j}\partial_{1}\varphi_{i})\AA_{1,2}(i,j,k)\partial_{3}\psi\dif x &(=T_{5}^{3})
\end{align*}
Again, $T_{1}^{3}$ vanishes by the same argument as for \eqref{eq:permuvanish1}, $T_{2}^{3}+T_{5}^{3}=0$ by permuting indices $i\leftrightarrow j$, and so we obtain analogously to above 
\begin{align*}
S_{1}^{3} & = -\sum_{ijk}\int_{\mathcal{O}_{\lambda}}\partial_{1}\varphi_{j}\varphi_{k}\partial_{2}\varphi_{i}\mathfrak{B}_{1}(i,j,k)\partial_{3}\psi\dif x &(=T_{6}^{3})\\ 
& + \sum_{ijk}\int_{\mathcal{O}_{\lambda}}\partial_{13}\varphi_{j}\varphi_{k}\partial_{2}\varphi_{i}\AA_{1,2}(i,j,k)\partial_{2}\psi\dif x&(=T_{7}^{3})\\ 
& + \sum_{ijk}\int_{\mathcal{O}_{\lambda}}\partial_{1}\varphi_{j}\partial_{3}\varphi_{k}\partial_{2}\varphi_{i}\AA_{1,2}(i,j,k)\partial_{2}\psi\dif x&(=T_{8}^{3})\\
& + \sum_{ijk}\int_{\mathcal{O}_{\lambda}}\partial_{1}\varphi_{j}\varphi_{k}\partial_{23}\varphi_{i}\AA_{1,2}(i,j,k)\partial_{2}\psi\dif x&(=T_{9}^{3})\\
& + \sum_{ijk}\int_{\mathcal{O}_{\lambda}}\partial_{1}\varphi_{j}\varphi_{k}\partial_{2}\varphi_{i}\underbrace{\partial_{3}\AA_{1,2}(i,j,k)}_{=0}\partial_{2}\psi\dif x.&
\end{align*}
Permuting indices $i\leftrightarrow j$ in $T_{1}^{2}$ and $T_{7}^{3}$ yields by virtue of the antisymmetry property of $\mathfrak{A}_{1,2}$ that $
T_{9}^{1}+T_{1}^{2}+T_{7}^{3} = 0$, and we directly find that $T_{7}^{1}+T_{3}^{2} = 0$. For terms $T_{8}^{1}$ and $T_{8}^{3}$, we permute indices $i\leftrightarrow j$ and $j\leftrightarrow k$ in term $T_{8}^{3}$ to obtain 
\begin{align}\label{eq:permu3}
T_{8}^{1}+T_{8}^{3} = 3\sum_{ijk}\int_{\mathcal{O}_{\lambda}}(\partial_{1}\varphi_{k})(\partial_{2}\varphi_{j})(\partial_{3}\varphi_{i})\AA_{1,2}(i,j,k)\partial_{2}\psi\dif x
\end{align}
For terms $T_{2}^{2}$ and $T_{9}^{3}$, we permute indices $i\leftrightarrow j$ in $T_{9}^{3}$ to obtain 
$T_{2}^{2}+T_{9}^{3} = 0$. Having left $T_{6}^{1}$ and $T_{6}^{3}$ untouched, we thus obtain 
\begin{align}\label{eq:S4vanish}
\begin{split}
S_{1} & = -2\sum_{ijk}\int_{\mathcal{O}_{\lambda}}(\partial_{2}\varphi_{j})(\varphi_{k}\partial_{3}\varphi_{i}\mathfrak{B}_{1}(i,j,k)\partial_{1}\psi)\dif x \;\;\;\;\;\;\;\;\;\,\,(=T_{6}^{1})\\
& -\sum_{ijk}\int_{\mathcal{O}_{\lambda}}\partial_{1}\varphi_{j}\varphi_{k}\partial_{2}\varphi_{i}\mathfrak{B}_{1}(i,j,k)\partial_{3}\psi\dif x \;\;\;\;\;\;\;\;\;\;\;\;\;\;\;\;\;\;\;\;\;(=T_{6}^{3}) \\ 
& - 2 \sum_{ijk}\int_{\mathcal{O}_{\lambda}}(\partial_{2}\varphi_{j})\varphi_{k}\partial_{3}\varphi_{i}\mathfrak{B}_{2}(i,j,k)\partial_{2}\psi\dif x  \;\;\;\;\;\;\;\;\;\;\;\;\;\;\;\;\,(=T_{10}^{1})\\
& +3\sum_{ijk}\int_{\mathcal{O}_{\lambda}}(\partial_{1}\varphi_{k})(\partial_{2}\varphi_{j})(\partial_{3}\varphi_{i})\AA_{1,2}(i,j,k)\partial_{2}\psi\dif x \;\;\;\;(=T_{8}^{1}+T_{8}^{3}) \\ 
& =: \mathbf{S}_{1}+\mathbf{S}_{2}+\mathbf{S}_{3} + \mathbf{S}'_{4}. 
\end{split}
\end{align}
We now claim that $\mathbf{S}'_{4}=0$. Let us first note that the overall sum in the definition of $\mathbf{S}'_{4}$ converges absolutely in $\lebe^{1}(\mathcal{O}_{\lambda})$. This can be seen similarly to the proof of Lemma~\ref{lem:linftybound}, and is a consequence of \ref{item:P3}, Lemma~\ref{lem:JogiLoew}~\ref{item:aux8} and $\mathscr{L}^{3}(\mathcal{O}_{\lambda})<\infty$, together with the bound 
\begin{align*}
\sum_{ijk}\int_{\mathcal{O}_{\lambda}}|(\partial_{1}\varphi_{k})(\partial_{2}\varphi_{j})(\partial_{3}\varphi_{i})\AA_{1,2}(i,j,k)\partial_{2}\psi|\dif x \leq c\lambda\|\nabla w_{1}\|_{\lebe^{1}(\R^{3})}\mathscr{L}^{3}(\mathcal{O}_{\lambda}), 
\end{align*}
where $c=c(3)>0$ is a constant only depending on the underlying space dimension $n=3$. By Lemma~\ref{lem:summationABvanish}, we have
\begin{align}\label{eq:permu6}
\sum_{ijk}(\partial_{1}\varphi_{k})(\partial_{2}\varphi_{j})(\partial_{3}\varphi_{i})\AA_{1,2}(i,j,k)\partial_{2}\psi \equiv 0\qquad\text{pointwisely in}\;\mathcal{O}_{\lambda}, 
\end{align}
to be understood as the limit of the corresponding partial sums. Therefore, 
\begin{align}\label{eq:permuS1}
\begin{split}
S_{1} & = -2\sum_{ijk}\int_{\mathcal{O}_{\lambda}}(\partial_{2}\varphi_{j})(\varphi_{k}\partial_{3}\varphi_{i}\mathfrak{B}_{1}(i,j,k)\partial_{1}\psi)\dif x \;\;\;\;\;\;\;\;\;\,\,(=T_{6}^{1})\\
& -\sum_{ijk}\int_{\mathcal{O}_{\lambda}}\partial_{1}\varphi_{j}\varphi_{k}\partial_{2}\varphi_{i}\mathfrak{B}_{1}(i,j,k)\partial_{3}\psi\dif x \;\;\;\;\;\;\;\;\;\;\;\;\;\;\;\;\;\;\;\;\;(=T_{6}^{3}) \\ 
& - 2 \sum_{ijk}\int_{\mathcal{O}_{\lambda}}(\partial_{2}\varphi_{j})\varphi_{k}\partial_{3}\varphi_{i}\mathfrak{B}_{2}(i,j,k)\partial_{2}\psi\dif x  \;\;\;\;\;\;\;\;\;\;\;\;\;\;\;\;\,(=T_{10}^{1}) \\ 
& =: \mathbf{S}_{1}+\mathbf{S}_{2}+\mathbf{S}_{3}. 
\end{split}
\end{align}
We now turn to $S_{2}$. Our line of action is similar to that for dealing with $S_{1}$ and so, integrating by parts twice, we successively obtain
\begin{align*}
S_{2} & = \sum_{ijk} \int_{\mathcal{O}_{\lambda}}\varphi_{k}(\partial_{23}\varphi_{j}\partial_{3}\varphi_{i}-\partial_{33}\varphi_{j}\partial_{2}\varphi_{i})\AA_{2,3}(i,j,k)\partial_{2}\psi\dif x &\\ 
& + \sum_{ijk}\int_{\mathcal{O}_{\lambda}}\varphi_{k}(\partial_{23}\varphi_{j}\partial_{2}\varphi_{i}-\partial_{22}\varphi_{j}\partial_{3}\varphi_{i})\AA_{2,3}(i,j,k)\partial_{3}\psi \dif x &\\ 
& = \sum_{ijk} (-1)\int_{\mathcal{O}_{\lambda}} (\partial_{2}\varphi_{j})(\partial_{3}\varphi_{k}\partial_{3}\varphi_{i}\AA_{2,3}(i,j,k)\partial_{2}\psi)\dif x & (=T_{1})\\ 
& -\sum_{ijk} \int_{\mathcal{O}_{\lambda}}(\partial_{2}\varphi_{j})(\varphi_{k}\partial_{33}\varphi_{i}\AA_{2,3}(i,j,k)\partial_{2}\psi) \dif x & (=T_{2})\\ 
& -\sum_{ijk} \int_{\mathcal{O}_{\lambda}}(\partial_{2}\varphi_{j})(\varphi_{k}\partial_{3}\varphi_{i}\partial_{3}\AA_{2,3}(i,j,k)\partial_{2}\psi) \dif x&(=T_{3}) \\ 
& -\sum_{ijk}\int_{\mathcal{O}_{\lambda}}(\partial_{2}\varphi_{j})(\varphi_{k}\partial_{3}\varphi_{i}\AA_{2,3}(i,j,k)\partial_{23}\psi) \dif x &(=T_{4})\\ 
& -\sum_{ijk} \int_{\mathcal{O}_{\lambda}} (\varphi_{k}\partial_{33}\varphi_{j}\partial_{2}\varphi_{i})\AA_{2,3}(i,j,k)\partial_{2}\psi\dif x & (=T_{5})\\  
& -\sum_{ijk} \int_{\mathcal{O}_{\lambda}}(\partial_{3}\varphi_{j}) (\partial_{2}\varphi_{k}\partial_{2}\varphi_{i}\AA_{2,3}(i,j,k)\partial_{3}\psi) \dif x  & (=T_{6})\\
&-\sum_{ijk} \int_{\mathcal{O}_{\lambda}}(\partial_{3}\varphi_{j}) (\varphi_{k}\partial_{22}\varphi_{i}\AA_{2,3}(i,j,k)\partial_{3}\psi) \dif x  &(=T_{7})\\
&-\sum_{ijk} \int_{\mathcal{O}_{\lambda}}(\partial_{3}\varphi_{j}) (\varphi_{k}\partial_{2}\varphi_{i}\partial_{2}\AA_{2,3}(i,j,k)\partial_{3}\psi) \dif x & (=T_{8})\\
&-\sum_{ijk} \int_{\mathcal{O}_{\lambda}}(\partial_{3}\varphi_{j}) (\varphi_{k}\partial_{2}\varphi_{i}\AA_{2,3}(i,j,k)\partial_{23}\psi) \dif x &(=T_{9})\\
& -\sum_{ijk} \int_{\mathcal{O}_{\lambda}}(\varphi_{k}\partial_{22}\varphi_{j}\partial_{3}\varphi_{i})\AA_{2,3}(i,j,k)\partial_{3}\psi \dif x & (=T_{10}).
\end{align*} 
Terms $T_{1}$ and $T_{6}$ vanish by the same argument as in \eqref{eq:permuvanish1}. Permuting indices $i\leftrightarrow j$, we then obtain $T_{2}+T_{5}=0$, and in a similar manner we see that $T_{7}+T_{10}=0$ and $T_{4}+T_{9}=0$. To conclude, we use Lemma~\ref{lem:ABprops} to obtain 
\begin{align}\label{eq:permuS2}
\begin{split}
S_{2}=T_{3} +T_{8}  = &-\sum_{ijk}\int_{\mathcal{O}_{\lambda}}(\partial_{2}\varphi_{j})(\varphi_{k}\partial_{3}\varphi_{i}\mathfrak{B}_{2}(i,j,k)\partial_{2}\psi) \dif x \\ 
& + \sum_{ijk} \int_{\mathcal{O}_{\lambda}}(\partial_{3}\varphi_{j}) (\varphi_{k}\partial_{2}\varphi_{i}\mathfrak{B}_{3}(i,j,k)\partial_{3}\psi) \dif x =: \mathbf{S}_{4}+\mathbf{S}_{5}. 
\end{split}
\end{align}
Term $S_{3}$ is given by 
\begin{align*}
S_{3} & :=2\sum_{ijk}\int_{\mathcal{O}_{\lambda}}(\partial_{33}\varphi_{j}\partial_{2}\varphi_{i}-\partial_{23}\varphi_{i}\partial_{3}\varphi_{i})\AA_{3,1}(i,j,k)\partial_{1}\psi \\ 
& + \sum_{ijk}\int_{\mathcal{O}_{\lambda}}(\partial_{13}\varphi_{j}\partial_{3}\varphi_{i}-\partial_{33}\varphi_{j}\partial_{1}\varphi_{i})\AA_{3,1}(i,j,k)\partial_{2}\psi\\ 
& + \sum_{ijk}\int_{\mathcal{O}_{\lambda}}(\partial_{23}\varphi_{j}\partial_{1}\varphi_{i} + \partial_{12}\varphi_{j}\partial_{3}\varphi_{i} - 2\partial_{13}\varphi_{j}\partial_{2}\varphi_{i})\AA_{3,1}(i,j,k)\partial_{3}\psi \\ & =: S_{3}^{1}+S_{3}^{2}+S_{3}^{3}
\end{align*}
Terms $S_{3}^{1}$ and $S_{3}^{2}$ are treated as as term $S_{1}^{1}$, where we now integrate by parts with respect to $\partial_{3}$ in $S_{3}^{1}$ or with respect to $\partial_{1}$ in $S_{3}^{2}$, respectively. Similary to the computation underlying $S_{1}$, this gives us 
\begin{align*}
S_{3} & = 2 \sum_{ijk}\int_{\mathcal{O}_{\lambda}}(\partial_{3}\varphi_{j})\varphi_{k}(\partial_{2}\varphi_{i})\mathfrak{B}_{1}(i,j,k)\partial_{1}\psi & (=T'_{1}) \\ 
& + 2\sum_{ijk}\int_{\mathcal{O}_{\lambda}}(\partial_{13}\varphi_{j})\varphi_{k}\partial_{2}\varphi_{i}\AA_{3,1}(i,j,k)\partial_{3}\psi & (=T'_{2})\\
& + 2\sum_{ijk}\int_{\mathcal{O}_{\lambda}}(\partial_{3}\varphi_{j})\partial_{1}\varphi_{k}\partial_{2}\varphi_{i}\AA_{3,1}(i,j,k)\partial_{3}\psi& (=T'_{3}) \\ 
& + 2\sum_{ijk}\int_{\mathcal{O}_{\lambda}}(\partial_{3}\varphi_{j})\varphi_{k}\partial_{12}\varphi_{i}\AA_{3,1}(i,j,k)\partial_{3}\psi & (=T'_{4})\\ 
& + 2\sum_{ijk}\int_{\mathcal{O}_{\lambda}}(\partial_{3}\varphi_{j})\varphi_{k}\partial_{2}\varphi_{i}\mathfrak{B}_{3}(i,j,k)\partial_{3}\psi & (=T'_{5})\\ 
& + \sum_{ijk}\int_{\mathcal{O}_{\lambda}}\partial_{1}\varphi_{j}\varphi_{k}\partial_{3}\varphi_{i}\mathfrak{B}_{1}(i,j,k)\partial_{2}\psi\dif x & (=T'_{6})\\ 
& + \sum_{ijk}\int_{\mathcal{O}_{\lambda}}(\partial_{2}\partial_{1}\varphi_{j})\varphi_{k}\partial_{3}\varphi_{i}\AA_{3,1}(i,j,k)\partial_{3}\psi & (=T'_{7})\\
& + \sum_{ijk}\int_{\mathcal{O}_{\lambda}}(\partial_{1}\varphi_{j})\partial_{2}\varphi_{k}\partial_{3}\varphi_{i}\AA_{3,1}(i,j,k)\partial_{3}\psi & (=T'_{8})\\ 
& + \sum_{ijk}\int_{\mathcal{O}_{\lambda}}(\partial_{1}\varphi_{j})\varphi_{k}\partial_{23}\varphi_{i}\AA_{3,1}(i,j,k)\partial_{3}\psi & (=T'_{9})\\ 
& + \sum_{ijk}\int_{\mathcal{O}_{\lambda}}\varphi_{k}(\partial_{23}\varphi_{j}\partial_{1}\varphi_{i})\AA_{3,1}(i,j,k)\partial_{3}\psi & (=T'_{10})\\ 
& + \sum_{ijk}\int_{\mathcal{O}_{\lambda}}\varphi_{k}(\partial_{12}\varphi_{j}\partial_{3}\varphi_{i})\AA_{3,1}(i,j,k)\partial_{3}\psi & (=T'_{11})\\ 
& -2\sum_{ijk}\int_{\mathcal{O}_{\lambda}}\varphi_{k}\partial_{13}\varphi_{j}\partial_{2}\varphi_{i}\AA_{3,1}(i,j,k)\partial_{3}\psi & (=T'_{12}).
\end{align*}
By an argument analogous to~\eqref{eq:S4vanish}ff., $T'_{3}=T'_{8}=0$. Moreover, permuting indices yields as above $T'_{4}+T'_{7}+T'_{11}=0$ and $T'_{9}+T'_{10}=0$, whereas $T'_{2}+T'_{12}=0$ follows directly. Therefore, 
\begin{align}\label{eq:permuS3}
\begin{split}
S_{3} & = 2 \sum_{ijk}\int_{\mathcal{O}_{\lambda}}(\partial_{3}\varphi_{j})\varphi_{k}(\partial_{2}\varphi_{i})\mathfrak{B}_{1}(i,j,k)\partial_{1}\psi \\ 
& + 2\sum_{ijk}\int_{\mathcal{O}_{\lambda}}(\partial_{3}\varphi_{j})\varphi_{k}\partial_{2}\varphi_{i}\mathfrak{B}_{3}(i,j,k)\partial_{3}\psi\\ 
& + \sum_{ijk}\int_{\mathcal{O}_{\lambda}}\partial_{1}\varphi_{j}\varphi_{k}\partial_{3}\varphi_{i}\mathfrak{B}_{1}(i,j,k)\partial_{2}\psi\dif x =: \mathbf{S}_{6}+\mathbf{S}_{7}+\mathbf{S}_{8}
\end{split}
\end{align} 
Until now, we have only considered the contributions from $\AA_{1,2}$, $\AA_{3,1}$ and $\AA_{2,3}$. The contributions containing $\mathfrak{B}_{1},\mathfrak{B}_{2},\mathfrak{B}_{3}$ then read as 
\begin{align*}
S_{4}+S_{5}+S_{6} & = 6\sum_{ijk}\int_{\mathcal{O}_{\lambda}}\varphi_{k}\partial_{2}\varphi_{j}\partial_{3}\varphi_{i}\mathfrak{B}_{1}(i,j,k)\partial_{1}\psi \\ 
& + 3 \sum_{ijk}\int_{\mathcal{O}_{\lambda}}\varphi_{k}\partial_{3}\varphi_{j}\partial_{1}\varphi_{i}\mathfrak{B}_{1}(i,j,k)\partial_{2}\psi\\
& + 3\sum_{ijk}\int_{\mathcal{O}_{\lambda}}\varphi_{k}\partial_{1}\varphi_{j}\partial_{2}\varphi_{i}\mathfrak{B}_{1}(i,j,k)\partial_{3}\psi \\ 
& + 3\sum_{ijk}\int_{\mathcal{O}_{\lambda}}\varphi_{k}\partial_{2}\varphi_{j}\partial_{3}\varphi_{i}\mathfrak{B}_{2}(i,j,k)\partial_{2}\psi\\
& + 3\sum_{ijk}\int_{\mathcal{O}_{\lambda}}\varphi_{k}\partial_{2}\varphi_{j}\partial_{3}\varphi_{i}\mathfrak{B}_{3}(i,j,k)\partial_{3}\psi\\
& = \mathbf{S}_{9}+\mathbf{S}_{10}+\mathbf{S}_{11}+\mathbf{S}_{12}+\mathbf{S}_{13}.
\end{align*}
Combining this with~\eqref{eq:permuS1},~ \eqref{eq:permuS2} and~\eqref{eq:permuS3}, we may then build the overall sum $S_{1}+...+S_{6}=\mathbf{S}_{1}+...+\mathbf{S}_{13}$. Summing up all terms, we note by an analogous permutation argument that $\mathbf{S}_{3}+\mathbf{S}_{4}+\mathbf{S}_{12}=0$, $\mathbf{S}_{5}+\mathbf{S}_{7}+\mathbf{S}_{13}=0$, and so 
\begin{align*}
\int_{\mathcal{O}_{\lambda}}w_{1}\cdot\nabla\psi\dif x & = 2\sum_{ijk}\int_{\mathcal{O}_{\lambda}}\varphi_{k}\partial_{2}\varphi_{j}\partial_{3}\varphi_{i}\mathfrak{B}_{1}(i,j,k)\partial_{1}\psi\dif x & (\sim \mathbf{S}_{1}+\mathbf{S}_{6}+\mathbf{S}_{9})\\ 
& + 2\sum_{ijk}\int_{\mathcal{O}_{\lambda}}\varphi_{k}\partial_{1}\varphi_{i}\partial_{3}\varphi_{j}\mathfrak{B}_{1}(i,j,k)\partial_{2}\psi\dif x &(\sim \mathbf{S}_{8}+\mathbf{S}_{10})\\ 
& + 2\sum_{ijk}\int_{\mathcal{O}_{\lambda}}\varphi_{k}\partial_{1}\varphi_{j}\partial_{2}\varphi_{i}\mathfrak{B}_{1}(i,j,k)\partial_{3}\psi\dif x &(\sim \mathbf{S}_{2}+\mathbf{S}_{11}), 
\end{align*}
where we use the symbol '$\sim$' to indicate where the single terms stem from. This is precisely~\eqref{eq:Bwrite}, and so the proof is complete.

\end{document}